\def\@adminfootnotes{%
 \let\@makefnmark\relax  \let\@thefnmark\relax
 \ifx\@empty\@date\else \@footnotetext{\@setdate}\fi
  \ifx\@empty\@subjclass\else \@footnotetext{\@setsubjclass}\fi
  \ifx\@empty\@keywords\else \@footnotetext{\@setkeywords}\fi
  \ifx\@empty\thankses\else \@footnotetext{%
    \def\par{\let\par\@par}\@setthanks}%
 \fi
}
\tikzstyle{densely dotted}=[dash pattern=on \pgflinewidth off 0.5pt]
\tikzset{anchorbase/.style={baseline={([yshift=-0.5ex]current bounding box.center)}},
tinynodes/.style={font=\tiny,text height=0.25ex,text depth=0.05ex},
smallnodes/.style={font=\scriptsize,text height=0.75ex,text depth=0.15ex},
usual/.style={line width=0.9,color=black},
dusual/.style={line width=0.9,color=spinach,densely dashed},
pole/.style={line width=3.0,color=specialgray},
crossline/.style={preaction={draw=white,line width=5.0pt,-},preaction={draw=black,line width=0.9pt,-}},
crosspole/.style={preaction={draw=white,line width=6.0pt,-},preaction={draw=specialgray,line width=3.0pt,-}},
mor/.style={line width=0.75,color=black,fill=cream},
blob/.style={circle,fill,minimum size=5.0pt,inner sep=0pt,outer sep=0pt},
blobed/.style n args={3}{decoration={markings,post length=0.5mm,pre length=0.5mm,
mark=at position #1 with {\node[blob,#3,label=left:$#2\!$]at (0,0){};}
},postaction={decorate}},
rblobed/.style n args={3}{decoration={markings,post length=0.5mm,pre length=0.5mm,
mark=at position #1 with {\node[blob,#3,label=right:$\!#2$]at (0,0){};}
},postaction={decorate}},
}
\tikzstyle{tikzdot}=[fill, circle, inner sep=1.6pt]
\newcommand{\tikzdiagh}[2][]{\tikz[#1,thick,baseline={([yshift=1ex+#2]current bounding box.center)}]}
\newcommand{\tikzdiagc}[1][]{\tikzdiagh[#1]{-1ex}}
\newtheorem{thm}{Theorem}[section]
\newtheorem{lem}[thm]{Lemma}
\newtheorem{cor}[thm]{Corollary}
\newtheorem{prop}[thm]{Proposition}
\theoremstyle{definition}
\theoremstyle{remark}
\newtheorem{rem}[thm]{Remark}
\numberwithin{equation}{section}
\def\Z{\mathbb Z}
\def\eR{\EuScript{R}}
\def\lra{\longrightarrow}
\def\mcC{\mathcal{C}}
\def\tr{\mathsf{tr}}
\newcommand{\dmod}{\mathsf{-gmod}}
\newcommand{\kk}{\mathbf{k}}
\newcommand{\undR}{\underline{R}}
\newcommand{\undB}{\underline{B}}
\newcommand{\minuss}{\underline{s}}
\newcommand{\undone}{\underline{1}}
\newcommand{\undotimes}{\underline{\otimes}}
\newcommand{\oneb}{\mathbbm{1}}
\newcommand{\xra}[1]{\xrightarrow{#1}}
\newcommand{\sbim}{\mathrm{SBim}} 
\newcommand{\HOM}{\mathrm{HOM}}
\newcommand{\Hom}{\mathrm{Hom}}
\DeclareMathOperator{\id}{id}
\begin{document}

\title{Odd two-variable Soergel bimodules and Rouquier complexes}


\author{Mikhail Khovanov} 
\address{Department of Mathematics, Columbia University, New York, NY 10027, USA}
\email{khovanov@math.columbia.edu}

\author{Krzysztof Putyra} 
\address{Institut f\"ur Mathematik, Universit\"at Z\"urich, Winterthurerstrasse 190 CH-8057 Z\"urich, Switzerland }
\email{krzysztof.putyra@math.uzh.ch}

\author{Pedro Vaz} 
\address{Institut de Recherche en Math\'ematique et Physique, Universit\'e catholique de Louvain, Chemin du Cyclotron 2, bte L7.01.02
1348 Louvain-la-Neuve, Belgium}
\email{pedro.vaz@uclouvain.be}

\thanks{The authors are grateful to the anonymous referee for a thorough and valuable work on the earlier version of the paper and to Cailan Li for additional corrections.
  M.K. was partially supported by NSF grant DMS-1807425 and Simons Foundation sabbatical award \#817792 (Simons Fellows program). 
  P.V. was supported by the Fonds de la Recherche Scientifique - FNRS under Grant no. MIS-F.4536.19.}

\subjclass[2020]{Primary 18M30, 18N25, 16D20}
\date{February 7, 2023}

\begin{abstract}
We consider the odd analogue of the category of Soergel bimodules. In the odd case and already for two variables, the transposition bimodule cannot be merged into the generating Soergel bimodule, forcing one into a monoidal category with a larger Grothendieck ring compared to the even case. We establish biadjointness of suitable functors and develop graphical calculi in the 2-variable case for the odd Soergel category and the related singular Soergel 2-category.  We describe the odd analogue of the Rouquier complexes and establish their invertibility in the homotopy category. For three variables, the absence of a direct sum decomposition of the tensor product of generating Soergel bimodules presents an obstacle for the Reidemeister III relation to hold in the homotopy category.
\end{abstract}

\maketitle

\tableofcontents

%
%

\section{Introduction}
\label{sec_intro}

In this note we propose an odd analogue of Soergel bimodules for Coxeter type $A_1$. Soergel bimodules for $A_1$ are certain bimodules for the algebra of polynomials in two variables. 
In the odd case it's role is played by the algebra of skew-symmetric polynomials $R=\kk\langle x_1,x_2\rangle /(x_1x_2+x_2x_1)$. The substitute for the generating Soergel bimodule $B$ over the polynomial algebra consists of two $R$-bimodules $B$ and $\undB$ that constitute a biadjoint pair (that is, the functor of tensoring with $\undB$ is both left and right adjoint to tensoring with $B$). 

Starting in Section~\ref{subsec-biadjointness} we develop a graphical calculus for the category of odd Soergel bimodules in two variables and define a pair of mutually-inverse functors $\eR,\eR'$ on the homotopy category of graded $R$-modules given by the tensor product with complexes of bimodules formed from suitable bimodule maps $B\stackrel{m}{\lra} R\{-1\}$ and $R\{1\}\lra \undB$, where $\{ \pm 1\}$ is a grading shift. 

These complexes of bimodules and corresponding functors $\eR,\eR'$ are odd analogues of the Rouquier complexes that in the even case give rise to a braid group action on the homotopy category of modules over the $n$-variable polynomial algebra. In Section~\ref{sec-obstacle} we explain an obstacle that exists in the odd case to having the braid relation $\eR_i\eR_{i+1} \eR_i\cong \eR_{i+1}\eR_i\eR_{i+1}$. The lack of this braid relation blocks an attempt, from which this note originated, to define odd HOMFLYPT link homology via braid closures and odd Soergel bimodules, analogous to the original construction of HOMFLYPT link homology via Hochschild homology of Soergel bimodules~\cite{Kh-Soergel}. It's not known  either whether the odd counterpart of bigraded $SL(N)$ link homology~\cite{KhRI} exists for $N>2$. For the definition and structure of odd $SL(2)$ link homology see~\cite{ORS,P-odd,P-phd,NV,NP}. 

In Section~\ref{sec_groth} we identify the Grothendieck ring of the category of odd Soergel bimodules for $A_1$ and compute a natural semilinear form and trace on that ring. 

Rouquier functors on  the even Soergel category are closely related to the invertible functors of twisting by a relative spherical object in the  Fukaya--Floer categories and in the derived categories of coherent sheaves~\cite{KhT}. It should be interesting to explore odd counterparts of such functors; one can, for instance, ask whether there exists an odd counterpart of quiver varieties and associated derived categories of coherent sheaves on them and, more generally, an odd counterpart of algebraic geometry. A simpler problem is to understand the relation between functors $\eR,\eR'$ and  recently constructed odd counterpart of Chuang--Rouquier symmetries and Rickard complexes~\cite{BK,ELV}.

%
%

\section{Bimodules for two strands}
\label{sec_two_strands}


\subsection{Anticommuting polynomials and odd Demazure operators}
\label{odd_demazure} 

Let $\kk$ be a commutative ring and denote by $R=\kk\langle x_1,x_2\rangle /(x_1x_2+x_2x_1)$ the algebra of anticommuting polynomials in two variables. Let $S_2$ be the symmetric group on two letters with generator $s$, acting on $R$ by $s(x_i)=-x_{s(i)}$ and $s(fg)=s(f)s(g)$ for $f,g\in R$.

Define the \emph{odd Demazure operator} $\partial:R\lra R$  (see~\cite{EKL,KKO,EL}) as follows: 
\begin{itemize}
\item $\partial(1)=0,$  $\partial(x_1)=\partial(x_2)=1$.
\item The twisted Leibniz rule holds 
\[\partial (fg)=(\partial f)g+s(f)\partial g .
\]
\end{itemize}

Note that ring $R$ does not have unique factorizations, for instance $(x_1+x_2)^2=(x_1-x_2)^2$ in $R$. 

The equation 
\begin{equation}\label{eq_skew_comm} 
\partial\circ s = -   s \circ \partial 
\end{equation}
follows via the Leibniz rule above and checking it on generators of $R$.

Let $R^s=\ker (\partial)={\rm im}(\partial)\subset R$. Equality $\ker(\partial)={\rm im}(\partial)$ is straighforward to check. The twisted Leibniz rule then implies that $R^s$ is a subring of $R$. 
The ring $R^s$ has generators  $E_1=x_1-x_2,E_2=x_1x_2$ and the  defining relation is that these generators anticommute, 
\[ R^s \ = \ \mathbf{k}\langle E_1,E_2\rangle /(E_1E_2+E_2E_1), \ \ E_1=x_1-x_2, \ E_2=x_1x_2. 
\] 
The action of $S_2$ on $R$ restricts to an action on $R^s$, with 
\[ s(E_1) = E_1, \ \ s(E_2) = - E_2. 
\]

Define the \emph{transposition bimodule} $\undR$ to be free rank one as a left and as a right $R$-module, with the generator $\undone$ and relations $f\undone = \undone s(f)$. It has a subbimodule $\undR^s:= R^s \undone = \undone R^s\subset \undR$. Denote $\undone \in \undR$ by $\undone^s$ when viewed as an element of $\undR^s\subset \undR$.  We fix bimodule isomorphisms 
\begin{eqnarray}
\label{eq_iso_two_undR_1}
 \undR \otimes_R \undR & \cong & R, \ \ \undone \otimes \undone \longmapsto 1, \\
  \label{eq_iso_two_undR_2}
 \undR^s \otimes_{R^s} \undR^s & \cong & R^s, \ \  \undone^s\otimes \undone^s \longmapsto 1^s, \\
  \label{eq_iso_two_undR_3}
 \undR^s \otimes_{R^s} \undR & \cong & {}_{R^s} R_R, \ \  \undone^s\otimes \undone \longmapsto 1,  \\
 \label{eq_iso_two_undR_4}
 \undR \otimes_{R^s} \undR^s & \cong & {}_R R_{R^s}, \ \  \undone \otimes \undone^s \longmapsto 1. 
\end{eqnarray}
The first map is an isomorphism of $R$-bimodules, the second -- that of $R^s$-bimodules.

The odd Demazure operator can be written as a map
\begin{equation}\label{eq_d_prime} 
\partial': R\lra \undR^s, \ \  \partial'(f)=\undone \partial(f).
\end{equation} 
It is then naturally a map of $R^s$-bimodules. We can also write it as a bimodule map 
\begin{equation}\label{eq_part_two}
\partial'': \undR \lra R^s, \ \  \partial''(f \undone g) = \partial(s(f)g) .
\end{equation}

The ring $R$ is a free rank two left and right module over $R^s$, 
\[ {}_{R^s} R \cong R^s \cdot 1 \oplus R^s\cdot x_i, \ \  R_{R^s} \cong 1 \cdot R^s\oplus x_i\cdot R^s, \ \  i\in \{1,2\}.  
\]

Note that  in the bimodule $R\otimes_{R^s} R$ we have
\begin{equation}\label{eq_x_onetwo}
     x_1\otimes 1 - 1 \otimes x_1 \ = \ x_2\otimes 1 - 1 \otimes x_2,
\end{equation}
since $x_1-x_2\in R^s$.

We make $R$ into a graded ring, with $\deg(x_1)=\deg(x_2)=2$. Then $R^s$ has an induced grading, and $\partial$ is a degree $-2$ map. Bimodules $\undR$ and $\undR^s$ are naturally graded, with $\deg(\undone)=\deg(\undone^s)=0$.


\subsection{Biadjointness}
\label{subsec-biadjointness}

We consider the graded bimodules ${}_R R_{R^s}$, ${}_{R^s} R_R$ and ${}_R\underline{R}_R$ and introduce the following four functors, where $\mathsf{gmod}$ stands for the category of graded modules and degree zero maps:  
\begin{itemize}
\item  $F_{\uparrow}:R^s\dmod \lra R\dmod$ is  the induction functor of tensoring with the  graded bimodule ${}_R R_{R^s}$.
\item $F_{\downarrow}: R\dmod \lra R^s\dmod$ is the restriction functor; it is isomorphic to tensoring with the bimodule ${}_{R^s} R_R$. 
\item  $F_- : R\dmod \lra R\dmod$ is the functor of tensoring with $\underline{R}$. 
\item  $F_{\minuss}: R^s\dmod\lra R^s\dmod$ is the functor of tensoring with $\undR^s$. 
\end{itemize}
The endofunctors $F_-$ and $F_{\minuss}$ are involutive. 
Fix a functor isomorphism 
\begin{equation} \label{eq_iso_down} F_{\minuss}\circ F_{\downarrow} \cong F_{\downarrow}\circ F_-
\end{equation} 
given by the bimodule isomorphism 
\[   \undR^s \otimes_{R^s} R \cong {}_{R_s} R\otimes_R \undR \cong \ {}_{R^s}\undR 
\] 
which takes $\undone^s \otimes f$ to $s(f)\otimes \undone$ to $\undone f$. The last term is $\undR$ viewed as $(R^s,R)$-bimodule with the standard left action of $R_s$ and right action of $R$. Likewise, there's an isomorphism 
\begin{equation}\label{eq_iso_up}  F_{\uparrow}\circ  F_{\minuss}\cong F_- \circ F_{\uparrow}
\end{equation}  
via the corresponding bimodule isomorphisms 
\[   R\otimes_{R^s} \undR^s  \cong \undR \otimes_R R_{R_s} \cong \undR_{R^s}, \ \  
f\otimes \undone^s \longmapsto 
\undone \otimes s(f) \longmapsto \undone s(f).
\]
These isomorphisms can be thought of as ``sliding'' involutive functors $F_-$ and $F_{\minuss}$ through the induction and restriction functors $F_{\uparrow}$ and $F_{\downarrow}$. 

We depict natural transformations between compositions of these functors by drawing planar diagrams, with regions labelled by categories $R\dmod$ (white regions) and $R^s\dmod$ (shaded regions), following the usual string diagram notation. 
Identity natural transformation of $F_{\uparrow}$ (respectively $F_{\downarrow}$) is denoted by a vertical line, with the shaded region to the left (respectively, to the right), see equation (\ref{eq_ids}) below. 
We denote the identity functor on a category $\mcC$ by 
$\oneb_{\mcC}$ or just by $\oneb$. 

\begin{equation}\label{eq_ids}
\begin{split}
  \xy (0,1)*{
\tikzdiagc[scale=.7]{
\node[rectangle,fill=blue!10,minimum width=.75cm,minimum height=1.4cm] (r) at (.5,0) {};
  \draw[thick,-to] (0,-1) -- (0, .1);\draw[thick] (0,.1) -- (0, 1);  
\draw[thick, dotted] (-1,1) to (1,1);\draw[thick, dotted] (-1,-1) to (1,-1);
\node at (0,-1.3) {\small \text{$F_{\uparrow}$}}; \node at (0,1.3) {\small \text{$F_{\uparrow}$}}; 
\node at (-1.65,0) {\small \text{$R\dmod$}};\node at (1.65,0) {\small \text{$R^s\dmod$}};
}} \endxy
\mspace{45mu}
\xy (0,0)*{
\tikzdiagc[scale=.7]{
\node[rectangle,fill=blue!10,minimum width=.75cm,minimum height=1.4cm] (r) at (-.5,0) {};
  \draw[thick,-to] (0,1) -- (0,-.1);\draw[thick] (0,-.1) -- (0, -1);  
\draw[thick, dotted] (-1,1) to (1,1);\draw[thick, dotted] (-1,-1) to (1,-1);
\node at (1.6,-1) {\small \text{$F_{\downarrow}$}}; \node at (1.6,1) {\small \text{$F_{\downarrow}$}}; 
\draw[-implies,double equal sign distance]  (1.6,-.4) to (1.6,.4);
\node at (2.1,0) {\small \text{$\id$}};
}} \endxy
\\[1ex] 
\xy (0,1)*{
\tikzdiagc[scale=.7]{
  \draw[thick,Orange,densely dashed] (0,-1) -- (0,1);  
\draw[thick, dotted] (-1,1) to (1,1);\draw[thick, dotted] (-1,-1) to (1,-1);
\node at (1.6,-1) {\small \text{$F_{-}$}}; \node at (1.6,1) {\small \text{$F_{-}$}}; 
\draw[-implies,double equal sign distance]  (1.6,-.4) to (1.6,.4);
\node at (2.1,0) {\small \text{$\id$}};
}} \endxy
\mspace{45mu}
\xy (0,0)*{
\tikzdiagc[scale=.7]{
\node[rectangle,fill=blue!10,minimum width=1.5cm,minimum height=1.4cm] (r) at (0,0) {};
  \draw[thick,Orange,densely dashed] (0,-1) -- (0,1);  
\draw[thick, dotted] (-1,1) to (1,1);\draw[thick, dotted] (-1,-1) to (1,-1);
\node at (1.6,-1) {\small \text{$F_{\minuss}$}}; \node at (1.6,1) {\small \text{$F_{\minuss}$}}; 
\draw[-implies,double equal sign distance]  (1.6,-.4) to (1.6,.4);
\node at (2.1,0) {\small \text{$\id$}};
}} \endxy
\end{split}
\end{equation}

The identity natural transformation of $F_-$ and $F_{\minuss}$ is denoted by a vertical dashed orange line, in a white or shaded region, respectively, see (\ref{eq_ids}) above. 

Sliding isomorphisms above are shown as crossings of strands, which are mu\-tu\-ally-inverse isomorphisms, see equations (\ref{eq_fig_crossings}-\ref{eq_fig_crossings}) below. 

\begin{gather}\label{eq_fig_crossings}
  \xy (0,0)*{
\tikzdiagc[scale=.5]{
\fill[blue!10] (-1.25,-1) to (-1,-1) -- (1,-1) -- (-1,1) to (-1.25,1)--cycle;
\draw[thick,-to] (-1,1) -- (.5,-.5);\draw[thick] (.45,-.45) -- (1,-1); 
\draw[thick,Orange,densely dashed] (-1,-1)-- (1, 1);  
\draw[thick, dotted] (-1.25,1) to (1.25,1);\draw[thick, dotted] (-1.25,-1) to (1.25,-1);
\node at (-1,1.3) {\small \text{$F_{\downarrow}$}};\node at (1,-1.5) {\small \text{$F_{\downarrow}$}};
\node at (-1,-1.5) {\small \text{$F_{\minuss}$}};\node at (1,1.3) {\small \text{$F_{-}$}};
}} \endxy
\mspace{40mu}
\xy (0,0)*{
\tikzdiagc[scale=.5]{
\fill[blue!10] (-1.25,-1) to (-1,-1) -- (1,1) -- (-1,1) to (-1.25,1)--cycle;
\draw[thick] (-1,-1) -- (-.45,-.45);\draw[thick,to-] (-.5,-.5) -- (1,1);  
\draw[thick,Orange,densely dashed] (-1,1)-- (1,-1);  
\draw[thick, dotted] (-1.25,1) to (1.25,1);\draw[thick, dotted] (-1.25,-1) to (1.25,-1);
\node at (1,1.3) {\small \text{$F_{\downarrow}$}};\node at (-1,-1.5) {\small \text{$F_{\downarrow}$}};
\node at (1,-1.5) {\small \text{$F_{-}$}};\node at (-1,1.3) {\small \text{$F_{\minuss}$}};
}} \endxy
\mspace{40mu}
\xy (0,0)*{
\tikzdiagc[scale=.5]{
\fill[blue!10] (1.25,-1) to (1,-1) -- (-1,1) to (1.25,1)--cycle;
\draw[thick] (-1,1) -- (-.45,.45);\draw[thick,to-] (-.5,.5) -- (1,-1); 
\draw[thick,Orange,densely dashed] (-1,-1)-- (1, 1);  
\draw[thick, dotted] (-1.25,1) to (1.25,1);\draw[thick, dotted] (-1.25,-1) to (1.25,-1);
\node at (-1,1.3) {\small \text{$F_{\uparrow}$}};\node at (1,-1.5) {\small \text{$F_{\uparrow}$}};
\node at (-1,-1.5) {\small \text{$F_{-}$}};\node at (1,1.3) {\small \text{$F_{\minuss}$}};
}} \endxy
\mspace{40mu}
\xy (0,0)*{
\tikzdiagc[scale=.5]{
\fill[blue!10] (-1,-1) --(1.25,-1) -- (1.25,1) -- (1,1) to (-1,-1)--cycle;
\draw[thick,-to] (-1,-1) -- (.5,.5);\draw[thick] (.45,.45) -- (1,1);  
\draw[thick,Orange,densely dashed] (-1,1)-- (1,-1);  
\draw[thick, dotted] (-1.25,1) to (1.25,1);\draw[thick, dotted] (-1.25,-1) to (1.25,-1);
\node at (1,1.3) {\small \text{$F_{\uparrow}$}};\node at (-1,-1.5) {\small \text{$F_{\uparrow}$}};
\node at (-1,1.3) {\small \text{$F_{-}$}};\node at (1,-1.5) {\small \text{$F_{\minuss}$}};
}} \endxy
\\[1ex]  
\xy (0,1)*{
\tikzdiagc[scale=.5]{
\fill[blue!10] (-1.25,-1.5)to (.75,-1.5) to[out=90,in=-90] (-.6,0) to[out=90,in=-90] (.75, 1.5) to (-1.25,1.5)--cycle;
\draw[thick,Orange,densely dashed] (-.75,-1.5) to[out=90,in=-90] (.6,0) to[out=90,in=-90] (-.75,1.5); 
\draw[thick] (.75,-1.5) to[out=90,in=-90] (-.6,0); 
\draw[thick,to-]  (-.6,-.1) to[out=90,in=-90] (.75, 1.5); 
\draw[thick, dotted] (-1.25,1.5) to (1.25,1.5);\draw[thick, dotted] (-1.25,-1.5) to (1.25,-1.5);
}} \endxy
 = 
\xy (0,0)*{
\tikzdiagc[scale=.5]{
\fill[blue!10] (-1.25,-1.5) to (.75,-1.5) to (.75,1.5) to (-1.25,1.5)--cycle;
 \draw[thick] (.75,-1.5) to (.75,0); \draw[thick,to-] (.75,-.1) to (.75,1.5); 
\draw[thick,Orange,densely dashed] (-.75,-1.5) to (-.75, 1.5); 
\draw[thick, dotted] (-1.25,1.5) to (1.25,1.5);\draw[thick, dotted] (-1.25,-1.5) to (1.25,-1.5);
}} \endxy
\mspace{60mu}
\xy (0,0)*{
\tikzdiagc[scale=.5]{
\fill[blue!10] (-1.25,-1.5)to (-.75,-1.5) to[out=90,in=-90] (.6,-.1) to[out=90,in=-90] (-.75, 1.5) to (-1.25,1.5)--cycle;
\draw[thick,Orange,densely dashed] (.75,-1.5) to[out=90,in=-90] (-.6,0) to[out=90,in=-90] (.75,1.5); 
\draw[thick] (-.75,-1.5) to[out=90,in=-90] (.6,0); 
\draw[thick,to-]  (.6,-.1) to[out=90,in=-90] (-.75, 1.5); 
\draw[thick, dotted] (-1.25,1.5) to (1.25,1.5);\draw[thick, dotted] (-1.25,-1.5) to (1.25,-1.5);
}} \endxy
 = 
\xy (0,0)*{
\tikzdiagc[scale=.5]{
\fill[blue!10] (-1.25,-1.5) to (-.75,-1.5) to (-.75,1.5) to (-1.25,1.5)--cycle;
 \draw[thick] (-.75,-1.5) to (-.75,0); \draw[thick,to-] (-.75,-.1) to (-.75,1.5); 
\draw[thick,Orange,densely dashed] (.75,-1.5) to (.75, 1.5); 
\draw[thick, dotted] (-1.25,1.5) to (1.25,1.5);\draw[thick, dotted] (-1.25,-1.5) to (1.25,-1.5);
}} \endxy
\\[1.5ex] \label{eq_fig_crossings-last}
\xy (0,0)*{
\tikzdiagc[scale=.5]{
\fill[blue!10] (1.25,-1.5) to (.75,-1.5) to[out=90,in=-90] (-.6,0) to[out=90,in=-90] (.75, 1.5) to (1.25,1.5)--cycle;
\draw[thick,Orange,densely dashed] (-.75,-1.5) to[out=90,in=-90] (.6,0) to[out=90,in=-90] (-.75,1.5); 
\draw[thick,-to] (.75,-1.5) to[out=90,in=-90] (-.6,0); 
\draw[thick]  (-.6,-.1) to[out=90,in=-90] (.75, 1.5); 
\draw[thick, dotted] (-1.25,1.5) to (1.25,1.5);\draw[thick, dotted] (-1.25,-1.5) to (1.25,-1.5);
}} \endxy
 = 
\xy (0,0)*{
\tikzdiagc[scale=.5]{
\fill[blue!10] (1.25,-1.5) to (.75,-1.5) to (.75,1.5) to (1.25,1.5)--cycle;
 \draw[thick,-to] (.75,-1.5) to (.75,.1); \draw[thick] (.75,0) to (.75,1.5); 
\draw[thick,Orange,densely dashed] (-.75,-1.5) to (-.75, 1.5); 
\draw[thick, dotted] (-1.25,1.5) to (1.25,1.5);\draw[thick, dotted] (-1.25,-1.5) to (1.25,-1.5);
}} \endxy
\mspace{60mu}
\xy (0,0)*{
\tikzdiagc[scale=.5]{
\fill[blue!10] (1.25,-1.5)to (-.75,-1.5) to[out=90,in=-90] (.6,0) to[out=90,in=-90] (-.75, 1.5) to (1.25,1.5)--cycle;
\draw[thick,Orange,densely dashed] (.75,-1.5) to[out=90,in=-90] (-.6,0) to[out=90,in=-90] (.75,1.5); 
\draw[thick,-to] (-.75,-1.5) to[out=90,in=-90] (.6,.1); 
\draw[thick]  (.6,0) to[out=90,in=-90] (-.75, 1.5); 
\draw[thick, dotted] (-1.25,1.5) to (1.25,1.5);\draw[thick, dotted] (-1.25,-1.5) to (1.25,-1.5);
}} \endxy
 = 
\xy (0,0)*{
\tikzdiagc[scale=.5]{
\fill[blue!10] (1.25,-1.5) to (-.75,-1.5) to (-.75,1.5) to (1.25,1.5)--cycle;
 \draw[thick,-to] (-.75,-1.5) to (-.75,.1); \draw[thick] (-.75,0) to (-.75,1.5); 
\draw[thick,Orange,densely dashed] (.75,-1.5) to (.75, 1.5); 
\draw[thick, dotted] (-1.25,1.5) to (1.25,1.5);\draw[thick, dotted] (-1.25,-1.5) to (1.25,-1.5);
}} \endxy
\end{gather}

\begin{prop}\label{prop_adj_two}
 The following pairs of functors are adjoint pairs: $(F_{\uparrow},F_{\downarrow})$ and $(F_{\downarrow},F_{-} \circ F_{\uparrow})$.
\end{prop}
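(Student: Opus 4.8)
The plan is to prove each adjunction by exhibiting explicit units and counits and checking the two triangle identities; since $F_\uparrow$, $F_\downarrow$ and $F_-$ are all given by tensoring with bimodules, the units and counits are bimodule maps and the triangle identities become identities among such maps. The pair $(F_\uparrow,F_\downarrow)$ is the ordinary extension-of-scalars/restriction adjunction and uses nothing special about $R$: writing $F_\uparrow(M)={}_RR_{R^s}\otimes_{R^s}M$ and $F_\downarrow(N)=N$ with the action restricted along $R^s\hookrightarrow R$, I would take the unit $\eta_M\colon M\to F_\downarrow F_\uparrow M$, $m\mapsto 1\otimes m$, and the counit $\varepsilon_N\colon F_\uparrow F_\downarrow N\to N$, $r\otimes n\mapsto rn$. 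These are natural, degree-preserving and $R^s$- (resp.\ $R$-) linear, and the composites $\varepsilon_{F_\uparrow M}\circ F_\uparrow(\eta_M)$ and $F_\downarrow(\varepsilon_N)\circ\eta_{F_\downarrow N}$ are the identity by inspection; the anticommutation $x_1x_2=-x_2x_1$ plays no role.

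For $(F_\downarrow,F_-\circ F_\uparrow)$ I would first use $\undR\otimes_RR\cong\undR$ to identify $F_-\circ F_\uparrow$ with tensoring over $R^s$ by the $(R,R^s)$-bimodule $\undR$, and hence $F_\downarrow\circ F_-\circ F_\uparrow$ with tensoring by $\undR$ viewed as an $(R^s,R^s)$-bimodule. The counit $F_\downarrow F_-F_\uparrow\Rightarrow\id$ is then the natural transformation induced by the bimodule map $\partial''\colon{}_{R^s}\undR_{R^s}\to R^s$ of~(\ref{eq_part_two}). For the unit $\id\Rightarrow F_-F_\uparrow F_\downarrow$ I would use that $R$ is free of rank two over $R^s$: choose dual bases $b_1,b_2$ and $b_1^{*},b_2^{*}$ of $R$ for the nondegenerate pairing $(a,b)\mapsto\partial(s(a)b)$ implicit in $\partial''$ (one such pair is read off from $\partial(x_1)=\partial(x_2)=1$), and let the unit be induced by the Casimir element $\sum_ib_i\undone\otimes b_i^{*}\in\undR\otimes_{R^s}R$. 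Two things then have to be checked: that this element is central for the two $R$-actions on $\undR\otimes_{R^s}R$ (equivalently, that $1\mapsto\sum_ib_i\undone\otimes b_i^{*}$ is an $(R,R)$-bimodule map), and that the two triangle composites collapse to the dual-basis identities, of the shape $\sum_i\partial(s(b_i))b_i^{*}=1$ and its right-handed analogue. Both use the twisted Leibniz rule and the relation $\partial\circ s=-s\circ\partial$ of~(\ref{eq_skew_comm}); the point is that the $s$-twist built into $\undR$ matches the $s$-twist in the pairing, which is exactly what makes the Casimir central in $\undR\otimes_{R^s}R$ (rather than in $R\otimes_{R^s}R$, where the untwisted Casimir of the even story lives).

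Equivalently, and perhaps more cleanly, one can isolate the single assertion that $R^s\subset R$ is a \emph{twisted graded Frobenius extension} with Frobenius homomorphism $\partial$ and Nakayama automorphism $s$: that is, that the $(R,R^s)$-bimodule map $\undR\to\Hom_{R^s}({}_{R^s}R_R,R^s)$ determined by $\partial''$ is an isomorphism (up to a grading shift). Granting this, $F_\downarrow$ has left adjoint $F_\uparrow$ by the first part and right adjoint the coinduction functor $\Hom_{R^s}({}_{R^s}R_R,-)$ (since $R$ is finitely generated over $R^s$); transporting coinduction across the Frobenius isomorphism identifies it with $\undR\otimes_{R^s}(-)=F_-\circ F_\uparrow$, and the standard unit and counit of (co)induction furnish the adjunction.

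The step I expect to be the main obstacle is this twisted Frobenius computation in the odd setting: verifying that the dual-basis coevaluation genuinely lands in $\undR\otimes_{R^s}R$ and is a bimodule map, and that the triangle identities close, while correctly bookkeeping the signs coming from $x_1x_2=-x_2x_1$, the twist by $s$, and the degree shift carried by $\partial$. Everything else is either classical (the first adjunction) or formal (deducing the second adjunction from the Frobenius isomorphism). As a by-product, the units and counits obtained in this way are the cups and caps of the graphical calculus developed in the rest of the section.
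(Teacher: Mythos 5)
Your proposal is essentially the paper's proof. For $(F_\uparrow,F_\downarrow)$ the paper uses exactly the standard induction–restriction unit/counit $\beta_0,\alpha_0$; for $(F_\downarrow,F_-\circ F_\uparrow)$ the paper's counit $\alpha_1$ is precisely $\partial''$, and its unit $\beta_1(1)=\undone\otimes x_1\otimes 1-\undone\otimes 1\otimes x_1$ is exactly the dual-basis (Casimir) element you describe, after which the paper verifies the bimodule-map property of $\beta_1$ and the two triangle identities by direct computation, just as you outline. Your closing reformulation in terms of $R^s\subset R$ being a twisted graded Frobenius extension with Frobenius form $\partial$ and Nakayama automorphism $s$ is a clean conceptual wrapper, but it repackages the same verification (centrality of the Casimir in $\undR\otimes_{R^s}R$ and the dual-basis identities) rather than replacing it, so the computational content is identical.
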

\begin{proof}
 Induction functor $F_{\uparrow}$ is left adjoint to the restriction functor $F_{\downarrow}$. Adjointness natural transformations come from standard bimodule homomorphisms 
 \begin{equation}\label{eq_map_mult} 
 \alpha_0\ : \ R\otimes_{R^s} R\lra R, \ \ f\otimes g \longmapsto fg, \ f,g\in R, 
 \end{equation}
 and 
 \[
 \beta_0 \ : \ R^s \lra {}_{R^s} R_{R^s}, \ \ 
 f\longmapsto f, \ f\in R^s. 
 \]  

These adjointness maps and the corresponding adjointness isotopy relations are shown in equation (\ref{eq_adj_zero}). 
We use $\alpha_0,\beta_0,$ etc. to denote both bimodule maps and the  corresponding natural transformations of functors ($\id_R$ stands for the identity functor in the category $R\dmod$).  
The composition $F_{\downarrow}\circ F_{\uparrow}$ is written $F_{\downarrow\uparrow}$, for brevity.  

\begin{equation}\label{eq_adj_zero}
\begin{split}
  \xy (0,-1.3)*{
\tikzdiagc[scale=.5]{
\fill[blue!10]  (-1,-1) to[out=75,in=180] (0,0) to[out=0,in=105] (1,-1)--cycle;
\draw[thick] (-1,-1) to[out=75,in=180] (0,0) to[out=0,in=105] (1,-1);
\draw[thick,-to] (.01,0) -- (.02,0);  
\draw[thick, dotted] (-1.25,1) to (1.25,1);\draw[thick, dotted] (-1.25,-1) to (1.25,-1);
\node at (1,-1.5) {\small \text{$F_{\downarrow}$}};
\node at (-1,-1.5) {\small \text{$F_{\uparrow}$}};
\node at (2,-1) {\small \text{$F_{\uparrow\downarrow}$}}; \node at (2,1) {\small \text{$\id_R$}}; 
\draw[-implies,double equal sign distance]  (2,-.4) to (2,.4);
\node at (2.7,0) {\small \text{$\alpha_0$}};
}} \endxy
\mspace{65mu}
  \xy (0,.45)*{
\tikzdiagc[scale=.5]{
\fill[blue!10]  (-1.25,1) to (1.25,1) to (1.25,-1) to (-1.25,-1) --cycle;
\fill[white]  (-1,1) to[out=-75,in=180] (0,0) to[out=0,in=-105] (1,1)--cycle;
\draw[thick] (-1,1) to[out=-75,in=180] (0,0) to[out=0,in=-105] (1,1);
\draw[thick,-to] (.01,0) -- (.02,0);  
\draw[thick, dotted] (-1.25,1) to (1.25,1);\draw[thick, dotted] (-1.25,-1) to (1.25,-1);
\node at (-1, 1.5) {\small \text{$F_{\downarrow}$}};
\node at (1, 1.5) {\small \text{$F_{\uparrow}$}};
\node at (2,-1) {\small \text{$\id_{R^s}$}}; \node at (2,1) {\small \text{$F_{\downarrow\uparrow}$}}; 
\draw[-implies,double equal sign distance]  (2,-.4) to (2,.4);
\node at (2.7,0) {\small \text{$\beta_0$}};
}} \endxy \mspace{15mu}
\\[1ex] 
\xy (0,.9)*{
\tikzdiagc[scale=.5,xscale=.75]{
\fill[blue!10]  (.5,-1) to (1.5,-1) to (1.5,1) to (1,1) to [out=-90,in=0] (.5,-.5)--cycle;
  \fill[blue!10]  (-1,-1) to[out=90,in=180] (-.5,.5) to[out=0,in=180] (.5,-.5) to (.5,-1)--cycle;
\draw[thick] (-1,-1) to[out=90,in=180] (-.5,.5) to[out=0,in=180] (.5,-.5) to[out=0,in=-90] (1,1);
\draw[thick,-to] (-1,-.15) -- (-1,-.1);  
\draw[thick, dotted] (-1.5,1) to (1.5,1);\draw[thick, dotted] (-1.5,-1) to (1.5,-1);
}} \endxy
=
\xy (0,0)*{
\tikzdiagc[scale=.5,xscale=.75]{
\fill[blue!10]  (1.5,-1) to (0,-1) to (0,1) to (1.5,1)--cycle;
\draw[thick,-to] (0,-1) to (0,0);
\draw[thick] (0,0) -- (0,1);  
\draw[thick, dotted] (-1.5,1) to (1.5,1);\draw[thick, dotted] (-1.5,-1) to (1.5,-1);
}} \endxy
\mspace{60mu}
\xy (0,0)*{
\tikzdiagc[scale=.5,xscale=.75,yscale=-1]{
\fill[blue!10]  (-1,-1) to[out=90,in=180] (-.5,.5) to (-.5,1) to (-1.5,1) to (-1.5,-1)--cycle;
\fill[blue!10]  (-.5,.5) to[out=0,in=180] (.5,-.5) to[out=0,in=-90] (1,1) to (-.5,1)--cycle;
\draw[thick] (-1,-1) to[out=90,in=180] (-.5,.5) to[out=0,in=180] (.5,-.5) to[out=0,in=-90] (1,1);
\draw[thick,-to] (-1,-.15) -- (-1,-.1);  
\draw[thick, dotted] (-1.5,1) to (1.5,1);\draw[thick, dotted] (-1.5,-1) to (1.5,-1);
}} \endxy
=
\xy (0,0)*{
\tikzdiagc[scale=.5,xscale=-.75,yscale=-1]{
\fill[blue!10]  (1.5,-1) to (0,-1) to (0,1) to (1.5,1)--cycle;
\draw[thick,-to] (0,-1) to (0,0);
\draw[thick] (0,0) -- (0,1);  
\draw[thick, dotted] (-1.5,1) to (1.5,1);\draw[thick, dotted] (-1.5,-1) to (1.5,-1);
}} \endxy
\end{split}
\end{equation}

For the second adjoint pair of functors, we consider corresponding bimodules and define bimodule homomorphisms 
 \[  \alpha_1 \ : \ {}_{R^s} R \otimes_R \undR \otimes_R R_{R^s} \  \lra \  R^s, \ \ \alpha_1(f \otimes \undone \otimes g ) = \partial(s(f)g). 
 \] 
 This is just the map $\partial''$ in (\ref{eq_part_two}), under the bimodule isomorphism $ {}_{R^s} R \otimes_R \undR \otimes_R R_{R^s}\cong {}_{R^s} \undR_{R^s} $. 
 \[  \beta_1 \ : \ R\lra \undR\otimes_R R \otimes_{R^s} R, \ \  \beta_1 (f ) = \undone \otimes x_1\otimes f - \undone \otimes 1 \otimes x_1 f, \ f\in R.
 \]
 Due to (\ref{eq_x_onetwo}),  
 \begin{equation*}
     \beta_1(f)= \undone \otimes (x_1\otimes 1 - 1\otimes x_1)f =  \undone \otimes (x_2\otimes 1 - 1\otimes x_2)f. 
 \end{equation*}
 To prove that $\beta_1$ is a well-defined bimodule map we check that $\ell_{x_i}(\beta_1(1))=r_{x_i}(\beta_1(1))$, where $\ell_{x}$, respectively $r_x$, denotes the left, respectively right, multiplication by $x$ in an $R$-bimodule: 
 \begin{align*}
 \ell_{x_1}(\beta(1))  & =   x_1 \undone \otimes x_1 \otimes 1 - x_1 \undone \otimes 1 \otimes x_1 = 
 \undone \otimes x_2x_1\otimes 1 - \undone \otimes x_2 \otimes x_1 \\
  & =  \undone \otimes ( 1 \otimes x_2x_1 - x_2\otimes x_1)  = \undone \otimes (1\otimes x_2 - x_2\otimes 1) x_1 \\ &=   \undone \otimes (1\otimes x_1 - x_1\otimes 1) x_1  = r_{x_1}(\beta(1)), \\
 \ell_{x_2}(\beta(1))  & =   x_2 \undone \otimes (x_2 \otimes 1 - 1 \otimes x_2) = 
 \undone \otimes (x_1x_2\otimes 1 - x_1 \otimes x_2) \\
  & =  \undone \otimes ( 1 \otimes x_1x_2 - x_1\otimes x_2)  = \undone \otimes 
  (1\otimes x_1 - x_1\otimes 1) x_2  =  r_{x_2}(\beta(1)). 
 \end{align*}
 We check the adjointness relation $(\alpha_1\otimes \id)\circ (\id \otimes \beta_1)=\id$, where $\id$ stands for the identity homomorphism of suitable bimodules: 
 \[  1\stackrel{\id\otimes \beta_1}{\longmapsto} 1\otimes \undone \otimes (x_1\otimes 1 - 1\otimes x_1) \stackrel{\alpha_1\otimes \id}{\longmapsto} 
 \partial(x_1)1 - \partial(1)x_1 = 1. 
 \] 
 To check the other adjointness relation
 $ (\id \otimes \alpha_1)\circ (\beta_1\otimes \id)=\id$ 
 we compute the corresponding endomorphism of the $(R,R^s)$-bimodule $\undR \otimes_R R_{R^s}$ (functor $F_-\circ F_{\uparrow}$ is given by tensoring with this bimodule): 
 \begin{align*}
 \undone \otimes 1  &\stackrel{\beta_1 \otimes \id}{\longmapsto}
 \undone \otimes (x_1\otimes 1-1\otimes x_1)\otimes \undone \otimes 1 \\ &\stackrel{\id \otimes \alpha_1}{\longmapsto} \undone\otimes x_1 \alpha_1(1\otimes \undone\otimes 1) - \undone \otimes 1 \alpha_1(x_1\otimes \undone\otimes 1) \\
 &\mspace{15mu}= \mspace{13mu}\undone\otimes x_1\partial(1) - \undone \otimes 1\partial(s(x_1))= 0 - \undone \otimes 1  (-1)= \undone \otimes 1. 
 \end{align*}

\end{proof}

Diagrams for maps $\alpha_1,\beta_1$ and their adjointness relations are depicted below. 
To shorten notations, we  write $F_{\downarrow-\uparrow}$ for $F_{\downarrow}\circ F_-\circ F_{\uparrow}$, etc. 
\begin{equation*}
\begin{split}
  \xy (0,1)*{
\tikzdiagc[scale=.6]{
\fill[blue!10]  (-1.25,1) to (-1.25,-1) to (-1,-1) to[out=75,in=180] (0,0) to (0,1)--cycle;
\fill[blue!10]  (0,1) to (0,0) to[out=0,in=105] (1,-1) to (1.25,-1)to (1.25,1) --cycle;
\draw[thick,Orange,densely dashed] (0,-1) to (0, 0); 
\draw[thick] (-1,-1) to[out=75,in=180] (0,0) to[out=0,in=105] (1,-1);
\draw[black,fill=black] (0,0) circle (1.5pt);
\draw[thick,-to] (-.65,-.3) -- (-.75,-.4);  
\draw[thick, dotted] (-1.25,1) to (1.25,1);\draw[thick, dotted] (-1.25,-1) to (1.25,-1);
\node at (2.9,-1) {\small \text{$F_{\downarrow -\uparrow}$}}; \node at (3.2,1) {\small \text{$\oneb_{R^s\dmod}$}}; 
\draw[-implies,double equal sign distance]  (2.9,-.4) to (2.9,.4);
\node at (3.6,0) {\small \text{$\alpha_1$}};
}} \endxy\mspace{50mu}
&
\mspace{50mu}
\xy (0,0)*{
\tikzdiagc[scale=.6]{
\fill[blue!10]  (-1,1) to[out=-75,in=180] (0,0) to[out=0,in=-105] (1,1)--cycle;
\draw[thick,Orange,densely dashed] (0,-.02) to (-.5,0) to[out=180,in=-90] (-1.5,1); 
\draw[thick] (-1,1) to[out=-75,in=180] (0,0) to[out=0,in=-105] (1,1);
\draw[black,fill=black] (0,0) circle (1.5pt);
\draw[thick,to-] (.65,.3) -- (.75,.4);  
\draw[thick, dotted] (-1.75,1) to (1.25,1);\draw[thick, dotted] (-1.75,-1) to (1.25,-1);
\node at (3,1) {\small \text{$F_{-\uparrow\downarrow}$}}; 
 \node at (3.2,-1) {\small \text{$\oneb_{R\dmod}$}};
\draw[-implies,double equal sign distance]  (2.9,-.4) to (2.9,.4);
\node at (3.6,0) {\small \text{$\beta_1$}};
}} \endxy
\\[1.5ex]
\xy (0,.8)*{
\tikzdiagc[scale=.6,xscale=.85]{
\fill[blue!10]  (-1,-1) to (-1.5,-1) to (-1.5,1) to (-.5,1) to (-.5,.5) to[out=180,in=90] (-1,-1)--cycle;
  \fill[blue!10]  (-.5,1) to (-.5,.5) to[out=0,in=180] (.5,-.5) to[out=0,in=-90] (1,1) --cycle;
\draw[thick,Orange,densely dashed] (.5,-.52) to (.25,-.5) to[out=180,in=-90] (-.5,.5); 
\draw[thick] (-1,-1) to[out=90,in=180] (-.5,.5) to[out=0,in=180] (.5,-.5) to[out=0,in=-90] (1,1);
\draw[black,fill=black] (-.5,.5) circle (1.5pt);
\draw[black,fill=black] (.5,-.5) circle (1.5pt);
\draw[thick,black,to-] (-1,-.35) -- (-1,-.3);  
\draw[thick, dotted] (-1.5,1) to (1.5,1);\draw[thick, dotted] (-1.5,-1) to (1.5,-1);
}} \endxy
=
\xy (0,0)*{
\tikzdiagc[scale=.6,xscale=.85]{
\fill[blue!10]  (-1.5,-1) to (0,-1) to (0,1) to (-1.5,1)--cycle;
\draw[thick] (0,-1) to (0,0);
\draw[thick,to-] (0,-.1) -- (0,1);  
\draw[thick, dotted] (-1.5,1) to (1.5,1);\draw[thick, dotted] (-1.5,-1) to (1.5,-1);
}} \endxy 
\mspace{40mu}
&
\mspace{50mu}
\xy (0,-1.7)*{
\tikzdiagc[scale=.6,xscale=.75,yscale=-1]{
\fill[blue!10]  (1,1) to (1.5,1) to (1.5,-1) to (.5,-1) to (.5,-.5) to[out=0,in=90] (1,1) --cycle;
\fill[blue!10]  (.5,-1) to (.5,-.5) to[out=180,in=0] (-.5,.5) to[out=180,in=90] (-1,-1)--cycle;
\draw[thick,Orange,densely dashed] (.5,-.5) to (.5, 1);
\draw[thick,Orange,densely dashed] (-.5,.48)  to (-.75,.5) to[out=180,in=90]  (-1.75, -1); 
\draw[thick] (-1,-1) to[out=90,in=180] (-.5,.5) to[out=0,in=180] (.5,-.5) to[out=0,in=-90] (1,1);
\draw[black,fill=black] (-.5, .5) circle (1.5pt);
\draw[black,fill=black] ( .5,-.5) circle (1.5pt);
\draw[thick,-to] (1,.15) -- (1,.1);  
\draw[thick, dotted] (-2,1) to (1.5,1);\draw[thick, dotted] (-2,-1) to (1.5,-1);
}} \endxy
=
\xy (0,0)*{
\tikzdiagc[scale=.6,xscale=-.75,yscale=-1]{
\fill[blue!10]  (-1.75,-1) to (0,-1) to (0,1) to (-1.75,1)--cycle;
\draw[thick,Orange,densely dashed] (1,-1) to (1, 1);
\draw[thick] (0,-1) to (0,0);
\draw[thick,to-] (0,-.1) -- (0,1);  
\draw[thick, dotted] (-1.75,1) to (1.5,1);\draw[thick, dotted] (-1.75,-1) to (1.5,-1);
}} \endxy
\\[-2ex]
\scriptstyle (\alpha_1\otimes \id_{F_{\downarrow}})(\id_{F_{\downarrow}}\otimes\beta_1)=\id_{F_{\downarrow}}
\mspace{50mu}
&
\mspace{40mu}
\scriptstyle (\id_{F_{-\uparrow}}\otimes \alpha_1)(\beta_1\otimes \id_{F_{-\uparrow}})=\id_{F_{-\uparrow}}
\end{split}
\end{equation*}

It is also  natural to define another ``cup'' morphism, with the dotted line entering the local minimum in the middle, see below,  as the composition of $\beta_1$ and the isomorphism (\ref{eq_iso_down}).  We call this cup \emph{balanced} and denote the morphism by $\widetilde{\beta}_1$.  Then the diagram representing $\beta_1$ can be rewritten as the composition of a balanced cup and a crossing, see below (the map $\psi\colon F_{-\uparrow}\to F_{\uparrow\minuss}$ is the crossing isomorphism in (\ref{eq_fig_crossings})). 

\begin{align*}
\xy (0,0)*{
\tikzdiagc[scale=.6]{
\fill[blue!10]  (-1,1) to[out=-75,in=180] (0,-.25) to[out=0,in=-105] (1,1)--cycle;
\draw[thick,Orange,densely dashed] (0,-.27) to (0,1); 
\draw[thick] (-1,1) to[out=-75,in=180] (0,-.25) to[out=0,in=-105] (1,1);
\draw[fill=black] (0,-.25) circle (1.5pt);
\draw[thick,to-] (.65,.085) -- (.75,.285);  
\draw[thick, dotted] (-1.5,1) to (1.25,1);\draw[thick, dotted] (-1.5,-1) to (1.25,-1);
}} \endxy
 := & 
\xy (0,0)*{
\tikzdiagc[scale=.6]{
\fill[blue!10]  (-1,1) to[out=-75,in=180] (0,-.25) to[out=0,in=-105] (1,1)--cycle;
\draw[thick,Orange,densely dashed] (0,-.27) to[out=180,in=-30] (-.8,-.1) to[out=135,in=-135] (-.75,.35) to[out=45,in=-90] (0,1); 
\draw[thick] (-1,1) to[out=-75,in=180] (0,-.25) to[out=0,in=-105] (1,1);
\draw[fill=black] (0,-.25) circle (1.5pt);
\draw[thick,to-] (.65,.085) -- (.75,.285);  
\draw[thick, dotted] (-1.5,1) to (1.25,1);\draw[thick, dotted] (-1.5,-1) to (1.25,-1);
\node at (2.4,1) {\small \text{$F_{\uparrow \minuss\downarrow}$}}; 
 \node at (2.4,-1) {\small \text{$\oneb_{R\dmod}$}};
\draw[-implies,double equal sign distance]  (2.4,-.4) to (2.4,.4);
\node at (4.0,0) {\small \text{$\widetilde{\beta}_1=\psi\circ\beta_1$}};
}} \endxy
\\[1ex] 
\xy (0,0)*{
\tikzdiagc[scale=.6]{
\fill[blue!10]  (-1,1) to[out=-75,in=180] (0,-.25) to[out=0,in=-105] (1,1)--cycle;
\draw[thick,Orange,densely dashed] (0,-.27) to (-.5,-.27) to[out=180,in=-90] (-1.5,1); 
\draw[thick] (-1,1) to[out=-75,in=180] (0,-.25) to[out=0,in=-105] (1,1);
\draw[fill=black] (0,-.25) circle (1.5pt);
\draw[thick,to-] (.65,.085) -- (.75,.285);  
\draw[thick, dotted] (-1.75,1) to (1.25,1);\draw[thick, dotted] (-1.75,-1) to (1.25,-1);
}} \endxy
= &
\xy (0,0)*{
\tikzdiagc[scale=.6]{
\fill[blue!10]  (-1,1) to[out=-75,in=180] (0,-.25) to[out=0,in=-105] (1,1)--cycle;
\draw[thick,Orange,densely dashed] (0,-.27) to[out=90,in=-90] (-1.5,1); \draw[thick] (-1,1) to[out=-75,in=180] (0,-.25) to[out=0,in=-105] (1,1);
\draw[fill=black] (0,-.25) circle (1.5pt);
\draw[thick,to-] (.65,.085) -- (.75,.285);  
\draw[thick, dotted] (-1.75,1) to (1.25,1);\draw[thick, dotted] (-1.75,-1) to (1.25,-1);
\node at (2.4,1) {\small \text{$F_{-\uparrow\downarrow}$}}; 
 \node at (2.4,-1) {\small \text{$\oneb_{R\dmod}$}};
\draw[-implies,double equal sign distance]  (2.4,-.4) to (2.4,.4);
\node at (4.3,0) {\small \text{$\beta_1=\psi^{-1}\circ\widetilde{\beta}_1$}};
}} \endxy
\end{align*}

The balanced cup morphism is 
\begin{equation*} \widetilde{\beta}_1 \colon R \lra R \otimes \undR^s \otimes R, \   \widetilde{\beta}_1 (f) = -x_2 \otimes \undone^s \otimes f - 1\otimes \undone^s \otimes x_1f = -x_1 \otimes \undone^s \otimes f - 1\otimes \undone^s \otimes x_2f. 
\end{equation*} 
As $\widetilde{\beta}_1$ is a bimodule map, $f\in R$ can also be placed on the far left in the formula. 

\begin{prop}\label{prop_adj_minus}
  Functors $F_{\uparrow\downarrow}=F_{\uparrow}\circ F_{\downarrow}$ and $F_{\uparrow \minuss\downarrow}=F_{\uparrow}\circ F_{\minuss}\circ F_{\downarrow}$ are biadjoint. 
\end{prop}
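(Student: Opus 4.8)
The plan is to obtain the biadjunction formally, by composing the two ordinary adjunctions of Proposition~\ref{prop_adj_two} with the sliding isomorphisms \eqref{eq_iso_down} and \eqref{eq_iso_up}, using that the involutions $F_-$ and $F_{\minuss}$ are equivalences biadjoint to themselves (via \eqref{eq_iso_two_undR_1} and \eqref{eq_iso_two_undR_2}). First I would record the one-sided adjoints of the building blocks. Proposition~\ref{prop_adj_two} gives that $F_{\uparrow}$ is left adjoint to $F_{\downarrow}$ and that $F_{\downarrow}$ is left adjoint to $F_-\circ F_{\uparrow}$, so in particular $F_-\circ F_{\uparrow}\cong F_{\uparrow}\circ F_{\minuss}$ (by \eqref{eq_iso_up}) is a right adjoint of $F_{\downarrow}$. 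Since $F_-$ is biadjoint to itself, the left adjoint of $F_-\circ F_{\uparrow}$ is $(F_{\uparrow})^{L}\circ F_-$; comparing with the previous sentence gives $F_{\downarrow}\cong (F_{\uparrow})^{L}\circ F_-$, hence $(F_{\uparrow})^{L}\cong F_{\downarrow}\circ F_-\cong F_{\minuss}\circ F_{\downarrow}$ by \eqref{eq_iso_down}. To summarise: $F_{\downarrow}$ has left adjoint $F_{\uparrow}$ and right adjoint $F_-F_{\uparrow}\cong F_{\uparrow}F_{\minuss}$, while $F_{\uparrow}$ has right adjoint $F_{\downarrow}$ and left adjoint $F_{\minuss}F_{\downarrow}\cong F_{\downarrow}F_-$.

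Now I would compose. Using that a right adjoint of a composite $P\circ Q$ is $Q^{R}\circ P^{R}$ and a left adjoint is $Q^{L}\circ P^{L}$, taking $P=F_{\uparrow}$ and $Q=F_{\downarrow}$ gives
\[
(F_{\uparrow\downarrow})^{R}\ \cong\ (F_{\downarrow})^{R}\circ (F_{\uparrow})^{R}\ =\ (F_-F_{\uparrow})\circ F_{\downarrow}\ \cong\ F_{\uparrow}F_{\minuss}F_{\downarrow}\ =\ F_{\uparrow\minuss\downarrow},
\]
the last isomorphism being \eqref{eq_iso_up}, and
\[
(F_{\uparrow\downarrow})^{L}\ \cong\ (F_{\downarrow})^{L}\circ (F_{\uparrow})^{L}\ =\ F_{\uparrow}\circ (F_{\minuss}F_{\downarrow})\ =\ F_{\uparrow\minuss\downarrow}.
\]
Hence $F_{\uparrow\minuss\downarrow}$ is simultaneously a left and a right adjoint of $F_{\uparrow\downarrow}$, which is exactly the assertion that the two functors form a biadjoint pair. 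Because composing adjunctions and transporting along equivalences produce honest (co)units, this argument also yields explicit unit and counit $2$-morphisms.

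The same computation can be packaged diagrammatically, which is the form that matches the string calculus under construction: the counit $F_{\uparrow\downarrow}\circ F_{\uparrow\minuss\downarrow}\Rightarrow\oneb$ is the nested cap obtained by inserting the multiplication cap $\alpha_0$ of \eqref{eq_map_mult} inside the cap $\alpha_1$ (with a crossing from \eqref{eq_fig_crossings} added so the region labels agree), the unit $\oneb\Rightarrow F_{\uparrow\minuss\downarrow}\circ F_{\uparrow\downarrow}$ is the nested cup built from $\beta_0$ and the balanced cup $\widetilde{\beta}_1$, and the other unit/counit are the mirror images of these. The four zig-zag identities then reduce, after isotoping the crossings past the cups and caps, to the snake relations for $(\alpha_0,\beta_0)$ and for $(\alpha_1,\beta_1)$ already verified in Proposition~\ref{prop_adj_two}. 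I expect the only real work to be bookkeeping: checking that the crossing isomorphisms are inserted consistently, so that the left-adjoint and right-adjoint descriptions of $F_{\uparrow\minuss\downarrow}$ are matched by the \emph{same} sliding isomorphisms, and tracking internal degree shifts — the maps $\alpha_1$ and $\beta_1$ have degrees $-2$ and $+2$, so, as in the even case, the biadjunction is the graded one only after the $\{\pm 1\}$-shift normalisation of the introduction entering the definitions of $B$ and $\undB$. Neither point is conceptually delicate.
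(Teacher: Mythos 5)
Your proposal is correct and takes essentially the same approach as the paper: the paper's proof of this proposition is the one-line observation that biadjointness follows by composing the two adjunctions of Proposition~\ref{prop_adj_two} with the sliding isomorphisms \eqref{eq_iso_down} and \eqref{eq_iso_up}, and your write-up simply spells out that categorical bookkeeping (adjoints of composites, transport along the involutive equivalences $F_-$, $F_{\minuss}$) in full detail.
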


\begin{proof}
This follows from Proposition~\ref{prop_adj_two} and functor isomorphisms (\ref{eq_iso_down}) and (\ref{eq_iso_up}). 
\end{proof} 

The corresponding biadjointness maps are shown below. We fix these biadjointness maps $\alpha_2,\beta_2,\alpha_3,\beta_3$. 

\begin{equation}\label{eq_biadjoint_pics}
\begin{split}
  \xy (0,1)*{
\tikzdiagc[scale=.6]{
\fill[blue!10] (-1.5,-1) to[out=75,in=180] (0,.5) to[out=0,in=105] (1.5,-1)--cycle;
\fill[white] (-1,-1) to[out=75,in=180] (0,0) to[out=0,in=105] (1,-1)--cycle;
\draw[thick,Orange,densely dashed] (1.25,-1) to[out=90,in=-90] (0, 0); 
\draw[thick] (-1,-1) to[out=75,in=180] (0,0) to[out=0,in=105] (1,-1);
\draw[thick,to-] (-1.15,-.2) -- (-1.23,-.3);  
\draw[thick] (-1.5,-1) to[out=75,in=180] (0,.5) to[out=0,in=105] (1.5,-1);
\draw[fill=black] (0,0) circle (1.5pt);
\draw[thick,-to] (-.65,-.3) -- (-.75,-.4);  
\draw[thick, dotted] (-1.75,1) to (1.75,1);\draw[thick, dotted] (-1.75,-1) to (1.75,-1);
\node at (3.2,-1) {\small \text{$F_{\uparrow\downarrow}\circ F_{\uparrow\minuss\downarrow}$}}; \node at (3.2,1) {\small \text{$\id_{R}$}}; \draw[-implies,double equal sign distance]  (3.1,-.4) to (3.1,.4);
\node at (3.7,0) {\small \text{$\alpha_2$}};
}} \endxy\mspace{50mu}
&
\mspace{50mu}
  \xy (0,.25)*{
\tikzdiagc[scale=.6,xscale=-1,yscale=-1]{
\fill[blue!10] (-1.5,-1) to[out=75,in=180] (0,.5) to[out=0,in=105] (1.5,-1)--cycle;
\fill[white] (-1,-1) to[out=75,in=180] (0,0) to[out=0,in=105] (1,-1)--cycle;
\draw[thick,Orange,densely dashed] (1.25,-1) to[out=100,in=-55] (0, .5); 
\draw[thick] (-1,-1) to[out=75,in=180] (0,0) to[out=0,in=105] (1,-1);
\draw[thick,to-] (-1.15,-.2) -- (-1.23,-.3);  
\draw[thick] (-1.5,-1) to[out=75,in=180] (0,.5) to[out=0,in=105] (1.5,-1);
\draw[fill=black] (0,.5) circle (1.5pt);
\draw[thick,-to] (-.65,-.3) -- (-.75,-.4);  
\draw[thick, dotted] (-1.75,1) to (1.75,1);\draw[thick, dotted] (-1.75,-1) to (1.75,-1);
\node at (-3.2,-1) {\small \text{$F_{\uparrow\minuss\downarrow}\circ F_{\uparrow\downarrow}$}}; \node at (-3.2,1) {\small \text{$\id_{R}$}}; \draw[-implies,double equal sign distance]  (-3.1,.4) to (-3.1,-.4);
\node at (-3.7,0) {\small \text{$\beta_2$}};
}} \endxy\mspace{50mu}
\\[1.5ex]
  \xy (0,1)*{
\tikzdiagc[scale=.6]{
\fill[blue!10] (-1.5,-1) to[out=75,in=180] (0,.5) to[out=0,in=105] (1.5,-1)--cycle;
\fill[white] (-1,-1) to[out=75,in=180] (0,0) to[out=0,in=105] (1,-1)--cycle;
\draw[thick,Orange,densely dashed] (-1.25,-1) to[out=80,in=-90] (0, 0); \draw[thick] (-1,-1) to[out=75,in=180] (0,0) to[out=0,in=105] (1,-1);
\draw[thick,-to] (1.15,-.2) -- (1.23,-.3);  
\draw[thick] (-1.5,-1) to[out=75,in=180] (0,.5) to[out=0,in=105] (1.5,-1);
\draw[fill=black] (0,0) circle (1.5pt);
\draw[thick,to-] (.65,-.3) -- (.75,-.4);  
\draw[thick, dotted] (-1.75,1) to (1.75,1);\draw[thick, dotted] (-1.75,-1) to (1.75,-1);
\node at (3.2,-1) {\small \text{$F_{\uparrow\minuss\downarrow}\circ F_{\uparrow\downarrow}$}}; \node at (3.2,1) {\small \text{$\id_{R}$}}; 
\draw[-implies,double equal sign distance]  (3.1,-.4) to (3.1,.4);
\node at (3.7,0) {\small \text{$\alpha_3$}};
}} \endxy\mspace{50mu}
&
\mspace{50mu}
  \xy (0,.25)*{
\tikzdiagc[scale=.6,xscale=1,yscale=-1]{
\fill[blue!10] (-1.5,-1) to[out=75,in=180] (0,.5) to[out=0,in=105] (1.5,-1)--cycle;
\fill[white] (-1,-1) to[out=75,in=180] (0,0) to[out=0,in=105] (1,-1)--cycle;
\draw[thick,Orange,densely dashed] (1.25,-1) to[out=100,in=-55] (0, .5); 
\draw[thick] (-1,-1) to[out=75,in=180] (0,0) to[out=0,in=105] (1,-1);
\draw[thick,-to] (-1.15,-.2) -- (-1.23,-.3);  
\draw[thick] (-1.5,-1) to[out=75,in=180] (0,.5) to[out=0,in=105] (1.5,-1);
\draw[fill=black] (0,.5) circle (1.5pt);
\draw[thick,to-] (-.65,-.3) -- (-.75,-.4);  
\draw[thick, dotted] (-1.75,1) to (1.75,1);\draw[thick, dotted] (-1.75,-1) to (1.75,-1);
\node at (3.2,-1) {\small \text{$F_{\uparrow\downarrow}\circ F_{\uparrow\minuss\downarrow}$}}; \node at (3.2,1) {\small \text{$\id_{R}$}}; 
\draw[-implies,double equal sign distance]  (3.1,.4) to (3.1,-.4);
\node at (3.7,0) {\small \text{$\beta_3$}};
}} \endxy
\end{split}
\end{equation}

A quick computation shows that the two bimodule maps on the left of the diagram (\ref{eq_red_below}) below differ by a minus sign. We can then define a trivalent vertex, with a orange dashed line entering it from below, as in (\ref{eq_red_below})  on the right. This bimodule map is given by 
\begin{equation} \label{eq_red_blue_cup}\undR \lra  R\otimes_{R^s} R, \ \ \undone \lra 1\otimes x_2 - x_2\otimes 1  = x_1\otimes 1 - 1\otimes x_1. 
\end{equation} 
It may be interesting to compare our dashed line and vertex with dashed lines in Ellis-Lauda~\cite{EL}. 

\begin{equation}\label{eq_red_below}
\xy (0,0)*{
\tikzdiagc[scale=.6]{
\fill[blue!10]  (-1,1) to[out=-75,in=180] (0,0) to[out=0,in=-105] (1,1)--cycle;
\draw[thick,Orange,densely dashed] (0,0) to[out=90,in=45] (-.8,.5) to[out=-135,in=90] (0,-1); 
\draw[thick] (-1,1) to[out=-75,in=180] (0,0) to[out=0,in=-105] (1,1);
\draw[fill=black] (0,0) circle (1.5pt);
\draw[thick,to-] (.45,.1) -- (.55,.2);  
\draw[thick, dotted] (-1.25,1) to (1.25,1);\draw[thick, dotted] (-1.25,-1) to (1.25,-1);
}} \endxy
\ = - \ 
\xy (0,0)*{
\tikzdiagc[scale=.6]{
\fill[blue!10]  (-1,1) to[out=-75,in=180] (0,0) to[out=0,in=-105] (1,1)--cycle;
\draw[thick,Orange,densely dashed] (0,0) to[out=90,in=135] (.8,.5) to[out=-45,in=90] (0,-1); 
\draw[thick] (-1,1) to[out=-75,in=180] (0,0) to[out=0,in=-105] (1,1);
\draw[fill=black] (0,0) circle (1.5pt);
\draw[thick,to-] (.45,.1) -- (.55,.2);  
\draw[thick, dotted] (-1.25,1) to (1.25,1);\draw[thick, dotted] (-1.25,-1) to (1.25,-1);
}} \endxy
\mspace{80mu}
\xy (0,0)*{
\tikzdiagc[scale=.6]{
\fill[blue!10]  (-1,1) to[out=-75,in=180] (0,0) to[out=0,in=-105] (1,1)--cycle;
\draw[thick,Orange,densely dashed] (0,0) to (0,-1); 
\draw[thick] (-1,1) to[out=-75,in=180] (0,0) to[out=0,in=-105] (1,1);
\draw[fill=black] (0,0) circle (1.5pt);
\draw[thick,to-] (.65,.3) -- (.75,.4);  
\draw[thick, dotted] (-1.25,1) to (1.25,1);\draw[thick, dotted] (-1.25,-1) to (1.25,-1);
}} \endxy
\ :=\
\xy (0,0)*{
\tikzdiagc[scale=.6]{
\fill[blue!10]  (-1,1) to[out=-75,in=180] (0,0) to[out=0,in=-105] (1,1)--cycle;
\draw[thick,Orange,densely dashed] (0,0) to[out=90,in=45] (-.8,.5) to[out=-135,in=90] (0,-1); 
\draw[thick] (-1,1) to[out=-75,in=180] (0,0) to[out=0,in=-105] (1,1);
\draw[fill=black] (0,0) circle (1.5pt);
\draw[thick,to-] (.45,.1) -- (.55,.2);  
\draw[thick, dotted] (-1.25,1) to (1.25,1);\draw[thick, dotted] (-1.25,-1) to (1.25,-1);
}} \endxy
\end{equation}

Likewise, there's a sign in a similar relation given by reflecting these diagrams about a horizontal axis and reversing the shading of regions, see~\eqref{eq:refletriv} below left. One can then define the reflected trivalent vertex as in the figure in~\eqref{eq:refletriv} below, on the right. This bimodule map is $\partial'$, see formula (\ref{eq_d_prime}), 
\begin{equation*}
    R\lra \undR^s, \ \ f\longmapsto \undone^s \partial(f), \ \ f\in R. 
\end{equation*}

\begin{equation}\label{eq:refletriv}
\xy (0,0)*{
\tikzdiagc[scale=.6,yscale=-1]{
\fill[blue!10] (-1.25,-1) to (1.25,-1) to (1.25,1) to (-1.25,1)--cycle;
\fill[white]  (-1,1) to[out=-75,in=180] (0,0) to[out=0,in=-105] (1,1)--cycle;
\draw[thick,Orange,densely dashed] (0,0) to[out=90,in=45] (-.8,.5) to[out=-135,in=90] (0,-1); 
\draw[thick] (-1,1) to[out=-75,in=180] (0,0) to[out=0,in=-105] (1,1);
\draw[fill=black] (0,0) circle (1.5pt);
\draw[thick,to-] (.45,.1) -- (.55,.2);  
\draw[thick, dotted] (-1.25,1) to (1.25,1);\draw[thick, dotted] (-1.25,-1) to (1.25,-1);
}} \endxy
\ = - \ 
\xy (0,0)*{
\tikzdiagc[scale=.6,yscale=-1]{
\fill[blue!10] (-1.25,-1) to (1.25,-1) to (1.25,1) to (-1.25,1)--cycle;
\fill[white]  (-1,1) to[out=-75,in=180] (0,0) to[out=0,in=-105] (1,1)--cycle;
\draw[thick,Orange,densely dashed] (0,0) to[out=90,in=135] (.8,.5) to[out=-45,in=90] (0,-1); 
\draw[thick] (-1,1) to[out=-75,in=180] (0,0) to[out=0,in=-105] (1,1);
\draw[fill=black] (0,0) circle (1.5pt);
\draw[thick,to-] (.45,.1) -- (.55,.2);  
\draw[thick, dotted] (-1.25,1) to (1.25,1);\draw[thick, dotted] (-1.25,-1) to (1.25,-1);
}} \endxy
\mspace{80mu}
\xy (0,0)*{
\tikzdiagc[scale=.6,yscale=-1]{
\fill[blue!10] (-1.25,-1) to (1.25,-1) to (1.25,1) to (-1.25,1)--cycle;
\fill[white]  (-1,1) to[out=-75,in=180] (0,0) to[out=0,in=-105] (1,1)--cycle;
\draw[thick,Orange,densely dashed] (0,0) to (0,-1); 
\draw[thick] (-1,1) to[out=-75,in=180] (0,0) to[out=0,in=-105] (1,1);
\draw[fill=black] (0,0) circle (1.5pt);
\draw[thick,to-] (.65,.3) -- (.75,.4);  
\draw[thick, dotted] (-1.25,1) to (1.25,1);\draw[thick, dotted] (-1.25,-1) to (1.25,-1);
}} \endxy
\ :=\
\xy (0,0)*{
\tikzdiagc[scale=.6,yscale=-1]{
\fill[blue!10] (-1.25,-1) to (1.25,-1) to (1.25,1) to (-1.25,1)--cycle;
\fill[white]  (-1,1) to[out=-75,in=180] (0,0) to[out=0,in=-105] (1,1)--cycle;
\draw[thick,Orange,densely dashed] (0,0) to[out=90,in=45] (-.8,.5) to[out=-135,in=90] (0,-1); 
\draw[thick] (-1,1) to[out=-75,in=180] (0,0) to[out=0,in=-105] (1,1);
\draw[fill=black] (0,0) circle (1.5pt);
\draw[thick,to-] (.45,.1) -- (.55,.2);  
\draw[thick, dotted] (-1.25,1) to (1.25,1);\draw[thick, dotted] (-1.25,-1) to (1.25,-1);
}} \endxy
\end{equation}

Some other relations in this graphical calculus are shown below. 
\begingroup\allowdisplaybreaks
\begin{gather*}
\xy (0,1)*{
\tikzdiagc[scale=.6]{
\draw[thick,Orange,densely dashed] (0,-.25) to (0,-1); 
\draw[fill=blue!10] (0,.25) circle (11pt);
\draw[fill=black] (0,-.125) circle (1.5pt);
\draw[thick,to-] (-.38,.3) -- (-.38,.2);  
\draw[thick, dotted] (-1,1) to (1,1);\draw[thick, dotted] (-1,-1) to (1,-1);
}} \endxy
\ =0
\mspace{53mu}
\xy (0,0)*{
\tikzdiagc[scale=.6]{
\fill[blue!10] (1.2,-1)to (1,-1) to (-1,1) to (1,1) to (1.2,1)--cycle;
\draw[thick,Orange,densely dashed] (0,-1) to[out=60,in=-45] (.4,.4) to[out=135,in=65] (-1,-1); 
\draw[thick,-to] (1,-1) to (-1,1);
\draw[thick, dotted] (-1.2,1) to (1.2,1);\draw[thick, dotted] (-1.2,-1) to (1.2,-1);
}} \endxy
\ =\
\xy (0,0)*{
\tikzdiagc[scale=.6]{
\fill[blue!10] (1.2,-1)to (1,-1) to (-1,1) to (1,1) to (1.2,1)--cycle;
\draw[thick,Orange,densely dashed] (0,-1) to[out=100,in=0] (-.5,-.25) to[out=180,in=90] (-1,-1); 
\draw[thick,-to] (1,-1) to (-1,1);
\draw[thick, dotted] (-1.2,1) to (1.2,1);\draw[thick, dotted] (-1.2,-1) to (1.2,-1);
}} \endxy
\mspace{53mu}
\xy (0,0)*{
\tikzdiagc[scale=.6]{
\fill[blue!10] (1.2,-1)to (1,-1) to (-1,1) to (1,1) to (1.2,1)--cycle;
\draw[thick,Orange,densely dashed] (0,1) to[out=240,in=135] (-.4,-.4) to[out=-45,in=245] (1,1); 
\draw[thick,-to] (1,-1) to (-1,1);
\draw[thick, dotted] (-1.2,1) to (1.2,1);\draw[thick, dotted] (-1.2,-1) to (1.2,-1);
}} \endxy
\ =\
\xy (0,0)*{
\tikzdiagc[scale=.6]{
\fill[blue!10] (1.2,-1)to (1,-1) to (-1,1) to (1,1) to (1.2,1)--cycle;
\draw[thick,Orange,densely dashed] (0,1) to[out=-100,in=180] (.5,.25) to[out=0,in=-90] (1,1); 
\draw[thick,-to] (1,-1) to (-1,1);
\draw[thick, dotted] (-1.2,1) to (1.2,1);\draw[thick, dotted] (-1.2,-1) to (1.2,-1);
}} \endxy
\\[1ex]
\xy (0,1)*{
\tikzdiagc[scale=.6,yscale=-1]{
\fill[blue!10] (-1,1) to (1,1) to (1,-1) to (-1,-1) --cycle;
\draw[thick,Orange,densely dashed] (0,-.25) to (0,-1); 
\draw[fill=white] (0,.25) circle (11pt);
\draw[fill=black] (0,-.125) circle (1.5pt);
\draw[thick,to-] (-.38,.3) -- (-.38,.2);  
\draw[thick, dotted] (-1,1) to (1,1);\draw[thick, dotted] (-1,-1) to (1,-1);
}} \endxy
\ =0
\mspace{47mu}
\xy (0,0)*{
\tikzdiagc[scale=.6,yscale=-1]{
\fill[blue!10]  (-1,1) to[out=-75,in=180] (0,0) to[out=0,in=-105] (1,1)--cycle;
\draw[thick,Orange,densely dashed] (0,1) to[out=-90,in=90] (-.6,0) to[out=-90,in=90] (0,-1); 
\draw[thick] (-1,1) to[out=-75,in=180] (0,0) to[out=0,in=-105] (1,1);
\draw[thick,-to] (0,0) -- (.01,0);  
\draw[thick, dotted] (-1.25,1) to (1.25,1);\draw[thick, dotted] (-1.25,-1) to (1.25,-1);
}} \endxy
\ =  \ 
\xy (0,0)*{
\tikzdiagc[scale=.6,yscale=-1]{
\fill[blue!10]  (-1,1) to[out=-75,in=180] (0,0) to[out=0,in=-105] (1,1)--cycle;
\draw[thick,Orange,densely dashed] (0,1) to[out=-90,in=90] (.6,0) to[out=-90,in=90] (0,-1); 
\draw[thick] (-1,1) to[out=-75,in=180] (0,0) to[out=0,in=-105] (1,1);
\draw[thick,-to] (0,0) -- (.01,0);  
\draw[thick, dotted] (-1.25,1) to (1.25,1);\draw[thick, dotted] (-1.25,-1) to (1.25,-1);
}} \endxy
\mspace{47mu}
\xy (0,0)*{
\tikzdiagc[scale=.6,yscale=1]{
\fill[blue!10 ](-1.25,1) to (1.25,1) to (1.25,-1) to (-1.25,-1)--cycle;
\fill[white]  (-1,1) to[out=-75,in=180] (0,0) to[out=0,in=-105] (1,1)--cycle;
\draw[thick,Orange,densely dashed] (0,1) to[out=-90,in=90] (-.6,0) to[out=-90,in=90] (0,-1); 
\draw[thick] (-1,1) to[out=-75,in=180] (0,0) to[out=0,in=-105] (1,1);
\draw[thick,-to] (.01,0) -- (.02,0);  
\draw[thick, dotted] (-1.25,1) to (1.25,1);\draw[thick, dotted] (-1.25,-1) to (1.25,-1);
}} \endxy
\ =\
\xy (0,0)*{
\tikzdiagc[scale=.6,yscale=1]{
\fill[blue!10 ](-1.25,1) to (1.25,1) to (1.25,-1) to (-1.25,-1)--cycle;
\fill[white]  (-1,1) to[out=-75,in=180] (0,0) to[out=0,in=-105] (1,1)--cycle;
\draw[thick,Orange,densely dashed] (0,1) to[out=-90,in=90] (.6,0) to[out=-90,in=90] (0,-1); 
\draw[thick] (-1,1) to[out=-75,in=180] (0,0) to[out=0,in=-105] (1,1);
\draw[thick,-to] (.01,0) -- (.02,0);  
\draw[thick, dotted] (-1.25,1) to (1.25,1);\draw[thick, dotted] (-1.25,-1) to (1.25,-1);
}} \endxy
\\[1ex]
\xy (0,1)*{
\tikzdiagc[scale=.5]{
\fill[blue!10] (1.5,-1.5) to (0,-1.5) to (0,1.5) to (1.5,1.5)--cycle;
\draw[thick,-to] (0,-1.5) to (0,.85); \draw[thick] (0,.80) to (0,1.5); 
\draw[thick,Orange,densely dashed] (1.25,-1.5) to[out=90,in=0] (.75,.75) to[out=180,in=0] (-.75,-.75) to[out=180,in=-90] (-1.25,1.5); 
\draw[thick, dotted] (-1.5,1.5) to (1.5,1.5);\draw[thick, dotted] (-1.5,-1.5) to (1.5,-1.5);
}} \endxy
=
\xy (0,0)*{
\tikzdiagc[scale=.5]{
\fill[blue!10] (1.5,-1.5) to (0,-1.5) to (0,1.5) to (1.5,1.5)--cycle;
\draw[thick,-to] (0,-1.5) to (0,.85); \draw[thick] (0,.80) to (0,1.5); 
\draw[thick,Orange,densely dashed] (1.25,-1.5) to (-1.25,1.5); 
\draw[thick, dotted] (-1.5,1.5) to (1.5,1.5);\draw[thick, dotted] (-1.5,-1.5) to (1.5,-1.5);
}} \endxy
\end{gather*}\endgroup

Recall that a vertical dotted orange line on a white (respectively, blue or shaded) background denotes the identity map of $R$-bimodule $\undR$ (respectively, of $\undR^s$-bimodule $\undR^s$), see below. 
\begin{equation}\label{eq:idBP_2}
\xy (0,0)*{
\tikzdiagc[scale=.5]{
\draw[very thick,Orange,densely dashed] (0,-1) -- (0, 1);
\draw[thick, dotted] (-1,1) to (1,1);
\draw[thick, dotted] (-1,-1) to (1,-1);
}} \endxy = \id_{\undR} , 
\mspace{40mu}
\xy (0,0)*{
\tikzdiagc[scale=.5]{
\fill[blue!10 ](-1,1) to (1,1) to (1,-1) to (-1,-1)--cycle;
\draw[very thick,Orange,densely dashed] (0,-1) -- (0, 1);
\draw[thick, dotted] (-1,1) to (1,1);
\draw[thick, dotted] (-1,-1) to (1,-1);
}} \endxy = \id_{\undR^s}.
\end{equation}
The isomorphism (\ref{eq_iso_two_undR_1}) between $\undR\otimes_R \undR$ and $R$ can be represented by orange dashed ``cup'' and ``cap'' maps, see below. These maps satisfy the following relations
\begin{equation*} 
\xy (0,0)*{
\tikzdiagc[scale=.5]{
\draw[very thick,Orange,densely dashed] (-.5,1) to[out=-90,in=180] (0,.25) to[out=0,in=-90] (.5, 1);
\draw[very thick,Orange,densely dashed] (-.5,-1) to[out=90,in=180] (0,-.25) to[out=0,in=90] (.5,-1);
\draw[thick, dotted] (-1,1) to (1,1);
\draw[thick, dotted] (-1,-1) to (1,-1);
}}  \endxy = 
\xy (0,0)*{
\tikzdiagc[scale=.5]{
\draw[very thick,Orange,densely dashed] (-.5,-1) -- (-.5, 1);
\draw[very thick,Orange,densely dashed] (.5,-1) -- (.5, 1);
\draw[thick, dotted] (-1,1) to (1,1);
\draw[thick, dotted] (-1,-1) to (1,-1);
}} \endxy
\mspace{80mu}
\xy (0,0)*{
\tikzdiagc[scale=.5]{
\draw[very thick,Orange,densely dashed] (0,0) circle (.5);
\draw[thick, dotted] (-1,1) to (1,1);
\draw[thick, dotted] (-1,-1) to (1,-1);
}} \endxy
=
\xy (0,0)*{
\tikzdiagc[scale=.5]{
\draw[thick, dotted] (-1,1) to (1,1);
\draw[thick, dotted] (-1,-1) to (1,-1);
}} \endxy
\end{equation*} 
as well as the isotopy relations on the cup and the cap. 
Orange dashed cup and cap maps have degree $0$. Isomorphism  (\ref{eq_iso_two_undR_2}) is represented by oranged dashed cup and cap maps on a blue (shaded) background, with the following relations: 
\begin{equation*}
\xy (0,0)*{
\tikzdiagc[scale=.5]{
\fill[blue!10 ](-1,1) to (1,1) to (1,-1) to (-1,-1)--cycle;
\draw[very thick,Orange,densely dashed] (-.5,1) to[out=-90,in=180] (0,.25) to[out=0,in=-90] (.5, 1);
\draw[very thick,Orange,densely dashed] (-.5,-1) to[out=90,in=180] (0,-.25) to[out=0,in=90] (.5,-1);
\draw[thick, dotted] (-1,1) to (1,1);
\draw[thick, dotted] (-1,-1) to (1,-1);
}} \endxy
=
\xy (0,0)*{
\tikzdiagc[scale=.5]{
\fill[blue!10 ](-1,1) to (1,1) to (1,-1) to (-1,-1)--cycle;
\draw[very thick,Orange,densely dashed] (-.5,-1) -- (-.5, 1);
\draw[very thick,Orange,densely dashed] (.5,-1) -- (.5, 1);
\draw[thick, dotted] (-1,1) to (1,1);
\draw[thick, dotted] (-1,-1) to (1,-1);
}} \endxy    
\mspace{80mu}
\xy (0,0)*{
\tikzdiagc[scale=.5]{
\fill[blue!10 ](-1,1) to (1,1) to (1,-1) to (-1,-1)--cycle;
\draw[very thick,Orange,densely dashed] (0,0) circle (.5);
\draw[thick, dotted] (-1,1) to (1,1);
\draw[thick, dotted] (-1,-1) to (1,-1);
}} \endxy
=
\xy (0,0)*{
\tikzdiagc[scale=.5]{
\fill[blue!10 ](-1,1) to (1,1) to (1,-1) to (-1,-1)--cycle;
\draw[thick, dotted] (-1,1) to (1,1);
\draw[thick, dotted] (-1,-1) to (1,-1);
}} \endxy
\end{equation*} 
and the isotopy relations. 

Likewise, isomorphisms (\ref{eq_iso_two_undR_3}) and (\ref{eq_iso_two_undR_4}) are represented by the orange dashed cup and cap diagrams in white-blue (or white-shaded) regions, as shown below for the isomorphisms (\ref{eq_iso_two_undR_3}), together with suitable relations on them, including isotopies.  

\begin{equation*}
\xy (0,0)*{
\tikzdiagc[scale=.6,xscale=.85]{
\fill[blue!10]  (-1,-1) to (0,-1) to (0,1) to (-1,1)--cycle;
\draw[thick,Orange,densely dashed] (-.5,1) to[out=-90,in=180] (0,-.2) to[out=0,in=-90] (.5,1); 
\draw[thick] (0,-1) to (0,1);
\draw[thick,to-] (0,.2) -- (0,.25);  
\draw[thick, dotted] (-1,1) to (1,1);\draw[thick, dotted] (-1,-1) to (1,-1);
}} \endxy
\mspace{70mu}
\xy (0,0)*{
\tikzdiagc[scale=.6,xscale=.85]{
\fill[blue!10]  (-1,-1) to (0,-1) to (0,1) to (-1,1)--cycle;
\draw[thick,Orange,densely dashed] (-.5,-1) to[out=90,in=180] (0,.2) to[out=0,in=90] (.5,-1); 
\draw[thick] (0,-1) to (0,1);
\draw[thick,to-] (0,-.3) -- (0,-.25);  
\draw[thick, dotted] (-1,1) to (1,1);\draw[thick, dotted] (-1,-1) to (1,-1);
}} \endxy
\mspace{70mu}
\xy (0,0)*{
\tikzdiagc[scale=.6,xscale=.85]{
\fill[blue!10]  (-1,-1) to (0,-1) to (0,1) to (-1,1)--cycle;
\draw[thick, Orange, densely dashed] (0,0) ellipse (13 pt and 17 pt);
\draw[thick] (0,-1) to (0,1);
\draw[thick,to-] (0,-.1) -- (0,0);  
\draw[thick, dotted] (-1,1) to (1,1);\draw[thick, dotted] (-1,-1) to (1,-1);
}} \endxy
=
\xy (0,0)*{
\tikzdiagc[scale=.6,xscale=.85]{
\fill[blue!10]  (-1,-1) to (0,-1) to (0,1) to (-1,1)--cycle;
\draw[thick] (0,-1) to (0,1);
\draw[thick,to-] (0,-.1) -- (0,0);  
\draw[thick, dotted] (-1,1) to (1,1);\draw[thick, dotted] (-1,-1) to (1,-1);
}} \endxy
\mspace{70mu}
\xy (0,0)*{
\tikzdiagc[scale=.6,xscale=.85]{
\fill[blue!10]  (-1,-1) to (0,-1) to (0,1) to (-1,1)--cycle;
\draw[thick,Orange,densely dashed] (-.5,1) to[out=-90,in=180] (0,.25) to[out=0,in=-90] (.5,1); 
\draw[thick,Orange,densely dashed] (-.5,-1) to[out=90,in=180] (0,-.25) to[out=0,in=90] (.5,-1); 
\draw[thick] (0,-1) to (0,1);
\draw[thick,to-] (0,-.1) -- (0,0);  
\draw[thick, dotted] (-1,1) to (1,1);\draw[thick, dotted] (-1,-1) to (1,-1);
}} \endxy
=
\xy (0,0)*{
\tikzdiagc[scale=.6,xscale=.85]{
\fill[blue!10]  (-1,-1) to (0,-1) to (0,1) to (-1,1)--cycle;
\draw[thick,Orange,densely dashed] (-.5,-1) to (-.5,1); 
\draw[thick,Orange,densely dashed] ( .5,-1) to ( .5,1); 
\draw[thick] (0,-1) to (0,1);
\draw[thick,to-] (0,-.1) -- (0,0);  
\draw[thick, dotted] (-1,1) to (1,1);\draw[thick, dotted] (-1,-1) to (1,-1);
}} \endxy
\end{equation*}

All eight possible orange dashed cup and cap maps have degree $0$.


\subsection{Bimodules \texorpdfstring{$B$}{B} and \texorpdfstring{$\undB$}{B} and their tensor products }

Define graded $R$-bimodules
\begin{equation}\label{eq_bimodules_B}
B\ := \ R\otimes_{R^s} R \{-1\}, \  \ \undB \ := \ R\otimes_{R^s} \undR^s \otimes_{R^s} R\{-1\}=R\undotimes_s R\{-1\}. 
\end{equation} 
We use a shorthand and denote $M\otimes_{R^s} \undR^s \otimes_{R^s}N$ (respectively, its element $m\otimes \undone^s \otimes n$) by $M\undotimes_{s} N$ (respectively, by $m\undotimes_s n$). Likewise, $M\otimes_R \undR \otimes_R N$ (and its element $m\otimes \undone \otimes n$) can be denoted $M\undotimes N$ (and by  $m\undotimes n$). 

Endofunctors $F_{\uparrow\downarrow}$ and $F_{\uparrow - \downarrow}$ of the category $R\dmod$ of graded $R$-modules are given by tensoring with bimodules $B\{1\}$ and $\undB\{1\}$, respectivley. Natural transformations $\alpha_2,\beta_2,\alpha_3, \beta_3$ can then be rewritten as bimodule maps, denoted the same (the tensor products are over $R$ and $f,g\in R$): 
\begin{align*}
  \alpha_2 & :  B\otimes \undB \lra R, \ \alpha_2(1\otimes_s f \otimes g \undotimes_s 1) =   \partial(s(fg)) ,  \\
  \beta_2 & :  R\lra \undB\otimes B, \  \beta_2(f) =   -(x_1 \undotimes_s 1) \otimes (1 \otimes_s f) - (1\undotimes_s 1)\otimes (1 \otimes_s x_2f),  \\
  \alpha_3 & :  \undB \otimes B \lra R, \ 
  \alpha_3(1\undotimes_s 1 \otimes f \otimes_s g) = \partial(f)g,  \\
  \beta_3 & :  R\lra B\otimes \undB, \  \beta_3(f) =  -(x_1 \otimes_s 1) \otimes (1 \undotimes_s f) - (1\otimes_s 1)\otimes (1 \undotimes_s x_2f) . 
\end{align*}
All four maps have zero degree: 
$\deg(\alpha_2)=\deg(\beta_2)=\deg(\alpha_3)=\deg(\beta_3)=0$.

We fix graded bimodule isomorphisms 
\begin{equation}\label{eq_blue_red} 
    \undR \otimes_R B \ \cong \ \undB \ \cong \   B\otimes_R \undR
\end{equation}
given by 
\begin{equation*}
    \undone \otimes f\otimes g \ \longmapsto \ s(f) \otimes \undone^s \ \otimes g, \ \ 
    f\otimes \undone^s \otimes g \ \longmapsto \ f\otimes s(g) \otimes \undone,  \  \ f,g\in R.  
\end{equation*}

We depict the identity maps of $B$  by a blue line 
\begin{equation}\label{eq:idBP}
\xy (0,0)*{
\tikzdiagc[scale=.5]{
  \draw[ultra thick,blue] (0,-1) -- (0, 1);
\draw[thick, dotted] (-1,1) to (1,1);
\draw[thick, dotted] (-1,-1) to (1,-1);
}} \endxy = \id_B
\end{equation}

Bimodule maps (\ref{eq_map_mult}) and (\ref{eq_red_blue_cup}) are given by the following diagrams (we write $f\otimes_s g$ for $f\otimes_{R^s}g$ and $f\otimes g = 1\otimes fg = fg\otimes 1$ for $f\otimes_R g$): 
\begin{align}\label{eq:enddot}
\xy (0,1)*{
\tikzdiagc[scale=.5]{
  \draw[ultra thick,blue] (0,-1) -- (0, 0)node[pos=1, tikzdot]{};
\draw[thick, dotted] (-1,1) to (1,1);
\draw[thick, dotted] (-1,-1) to (1,-1);
}} \endxy\ 
&\colon B\stackrel{m}{\lra} R , \mspace{20mu}
f\otimes_s g\mapsto fg ,
\\[1ex] \label{eq:startdot}
\xy (0,1)*{
\tikzdiagc[scale=.5]{
\draw[very thick, densely dashed,Orange] (0,-1) -- (0,0);
\draw[ultra thick,blue] (0,0) -- (0, 1)node[pos=0, tikzdot]{};
\draw[thick, dotted] (-1,1) to (1,1);
\draw[thick, dotted] (-1,-1) to (1,-1);
}} \endxy\ 
&\colon \undR \stackrel{\Delta}{\lra} B , \mspace{20mu}
\undone \mapsto 1\otimes_s x_2 - x_2\otimes_s 1= 1\otimes_s x_1 - x_1\otimes_s 1.
\end{align}
Due to our definition (\ref{eq_bimodules_B}) of the graded bimodule $B$, both of these maps have degree $1$. 
The maps in equations~\eqref{eq:enddot} and~\eqref{eq:startdot} fit into a short exact sequence 
\begin{equation} \label{exact_seq_1} 
0 \lra 
\undR\{1\} \xrightarrow{
\xy (0,0)*{
\tikzdiagc[scale=.35]{
\draw[very thick, densely dashed,Orange] (0,-1) -- (0,0);
\draw[ultra thick,blue] (0,0) -- (0, 1)node[pos=0, tikzdot]{};
\draw[thick, dotted] (-1,1) to (1,1);
\draw[thick, dotted] (-1,-1) to (1,-1);
}} \endxy
} B \xrightarrow{
\xy (0,0)*{
\tikzdiagc[scale=.35]{
  \draw[ultra thick,blue] (0,-1) -- (0, 0)node[pos=1, tikzdot]{};
\draw[thick, dotted] (-1,1) to (1,1);
\draw[thick, dotted] (-1,-1) to (1,-1);
}} \endxy
}R\{-1\} \lra 0 ,
\end{equation} 
where we shifted the gradings of the left and right terms to make the differential grading-preserving.  

Tensoring this sequence with $\undR$ gives another exact sequence, since $\undR$ is a free left and right $R$-module:  
\begin{equation}\label{eq:SESBuB}
0 \lra R\{1\} \xrightarrow{ 
\xy (0,0)*{
\tikzdiagc[scale=.35]{
\draw[very thick,densely dashed,Orange] (-.5,0) to[out=-90,in=180] (-.1,-.75) to[out=0,in=-90] (.5,1);
\draw[ultra thick,blue] (-.5,0) -- (-.5, 1)node[pos=0, tikzdot]{};
\draw[thick, dotted] (-1,1) to (1,1);
\draw[thick, dotted] (-1,-1) to (1,-1);
}} \endxy
} B\otimes_R \undR \xrightarrow{ 
\xy (0,0)*{
\tikzdiagc[scale=.35]{
\draw[ultra thick,blue] (-.5,-1) -- (-.5, 0)node[pos=1, tikzdot]{};
\draw[very thick, densely dashed,Orange] (.5,-1) -- (.5,1);
\draw[thick, dotted] (-1,1) to (1,1);
\draw[thick, dotted] (-1,-1) to (1,-1);
}} \endxy
} \undR\{-1\} \lra 0 .
\end{equation}
The middle term in the second sequence is isomorphic to $\undB$. Here and later we fix the isomorphism $B\otimes_R \undR\cong \undB$ given by \begin{equation*}
\xy (0,0)*{
\tikzdiagc[scale=.6]{
\fill[blue!10] (.5,-1) to (-.5,-1) to (-.5,1) to (.5,1)--cycle;
\draw[thick,Orange,densely dashed] (1,-1) to[out=90,in=0] (.5,0) to[out=180,in=-90] (0,1); 
\draw[thick,-to] (-.5,-1) to (-.5,.1); \draw[thick] (-.5,0) to (-.5,1); 
\draw[thick] (.5,-1) to (.5,-.5); \draw[thick,to-] (.5,-.55) to (.5,1); 
\draw[thick, dotted] (-1,1) to (1,1);\draw[thick, dotted] (-1,-1) to (1,-1);
}} \endxy
\end{equation*}

Exactness of sequences (\ref{exact_seq_1}) and (\ref{eq:SESBuB})  implies relations
\[
\xy (0,0)*{
\tikzdiagc[scale=.5]{
\draw[very thick, densely dashed,Orange] (0,-1) -- (0,0);
\draw[ultra thick,blue] (0,-.25) -- (0, .55)node[pos=0, tikzdot]{}node[pos=1, tikzdot]{};
\draw[thick, dotted] (-1,1) to (1,1);
\draw[thick, dotted] (-1,-1) to (1,-1);
}} \endxy
=
0
\mspace{80mu}
\xy (0,0)*{
\tikzdiagc[scale=.5]{
\draw[very thick,densely dashed,Orange] (-.5,0) to[out=-90,in=180] (-.1,-.75) to[out=0,in=-90] (.5,1);
\draw[ultra thick,blue] (-.5,-.2) -- (-.5, .55)node[pos=0, tikzdot]{}node[pos=1, tikzdot]{};
\draw[thick, dotted] (-1,1) to (1,1);
\draw[thick, dotted] (-1,-1) to (1,-1);
}} \endxy
=
0 .
\]
 Note that the two relations are equivalent, due to the isotopy relation on red cups and caps.

\begin{lem}
The following are $(R,R)$-bimodule maps:
\begin{align}
\xy (0,1)*{
\tikzdiagc[scale=.5]{
\draw[ultra thick,blue] (-1,-1)-- (1, 1); 
\draw[very thick,Orange,densely dashed] (1,-1)-- (-1, 1);  
\draw[thick, dotted] (-1.25,1) to (1.25,1);\draw[thick, dotted] (-1.25,-1) to (1.25,-1);
}} \endxy
&\colon 1\otimes_s 1\otimes \undone \mapsto \undone\otimes 1\otimes_s 1 ,
\\
\xy (0,1)*{
\tikzdiagc[scale=.5,xscale=-1]{
\draw[ultra thick,blue] (-1,-1)-- (1, 1); 
\draw[very thick,Orange,densely dashed] (1,-1)-- (-1, 1);  
\draw[thick, dotted] (-1.25,1) to (1.25,1);\draw[thick, dotted] (-1.25,-1) to (1.25,-1);
}} \endxy
&\colon \undone \otimes 1 \otimes_s 1 \mapsto 1\otimes_s 1\otimes\undone ,
\\  \label{eq:splitB}
\xy (0,1)*{
\tikzdiagc[scale=.5]{
   \draw[ultra thick,blue] (0,0)-- (0, -1); \draw[ultra thick,blue] (-1,1) -- (0,0); \draw[ultra thick,blue] (1,1) -- (0,0);
 \draw[thick, dotted] (-1.25,1) to (1.25,1);\draw[thick, dotted] (-1.25,-1) to (1.25,-1);
}} \endxy
&\colon 1\otimes_s 1\mapsto 1\otimes_s 1\otimes_s 1 ,
\\ \label{eq:mergeB}
\xy (0,1)*{
\tikzdiagc[scale=.5]{
\draw[very thick,Orange,densely dashed] (0,0) to[out=160,in=-90] (-1, 1);  
   \draw[ultra thick,blue] (0,0)-- (0,1); \draw[ultra thick,blue] (-1,-1) -- (0,0); \draw[ultra thick,blue] (1,-1) -- (0,0);
 \draw[thick, dotted] (-1.25,1) to (1.25,1);\draw[thick, dotted] (-1.25,-1) to (1.25,-1);
}} \endxy
&\colon 1\otimes_s f\otimes_s 1 \mapsto \undone\otimes \partial(f) \otimes_s 1 .
\end{align}
\end{lem}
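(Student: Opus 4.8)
Each of the four maps is given by a $\kk$-linear rule on a spanning set of its source, extended by the prescription $\phi(a\cdot v\cdot c)=a\cdot\phi(v)\cdot c$ for $a,c\in R$; so $(R,R)$-bilinearity is automatic, and the only point to verify in each case is that the rule respects the balancing relations of the tensor products involved. For the first two (crossing) maps I would simply observe that they are the isomorphisms fixed in~\eqref{eq_blue_red}: composing the two bimodule isomorphisms there gives a bimodule isomorphism $\undR\otimes_R B\cong\undB\cong B\otimes_R\undR$ carrying $\undone\otimes 1\otimes_s 1\mapsto 1\otimes_s\undone^s\otimes_s 1\mapsto 1\otimes_s 1\otimes\undone$, which is precisely the second map of the Lemma, and the first map is its inverse; hence both are $(R,R)$-bimodule maps, with nothing further to check.

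For the split map~\eqref{eq:splitB}, I would identify $B\otimes_R B\cong R\otimes_s R\otimes_s R$ (up to a grading shift) and recognize the map as $\id_R\otimes_s\iota\otimes_s\id_R$, where $\iota\colon R^s\hookrightarrow R$ is the inclusion of the subring, using the unit identifications $R\cong R\otimes_s R^s\cong R^s\otimes_s R$ to insert the copy of $R^s$ in the middle. Since $\iota$ is a homomorphism of $(R^s,R^s)$-bimodules and tensoring over $R^s$ with identity bimodule maps preserves bimodule maps, the split map is an $(R,R)$-bimodule map, and it sends $1\otimes_s 1$ to $1\otimes_s 1\otimes_s 1$ as required.

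The merge map~\eqref{eq:mergeB} is the one that calls for a genuine computation, and this is where the twisted Leibniz rule enters. I would first show that
\[
\mu_0\colon R\otimes_s R\lra\undR,\qquad a\otimes_s b\longmapsto a\,\undone\,\partial(b),
\]
is a well-defined homomorphism of $(R,R^s)$-bimodules: for $r\in R^s$ one has $\partial(r)=0$, hence $\partial(rb)=s(r)\partial(b)$ and $\partial(br)=\partial(b)\,r$, while $r\,\undone=\undone\,s(r)$ in $\undR$; these identities give $\mu_0(ar\otimes_s b)=a\,\undone\,s(r)\,\partial(b)=\mu_0(a\otimes_s rb)$ (well-definedness over the $R^s$-balance), left $R$-linearity is immediate, and $\mu_0((a\otimes_s b)\,r)=a\,\undone\,\partial(b)\,r=\mu_0(a\otimes_s b)\,r$ gives right $R^s$-linearity. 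Tensoring $\mu_0$ on the right over $R^s$ with $\id_R$ then yields an $(R,R)$-bimodule map $\mu_0\otimes_s\id_R\colon B\otimes_R B\cong R\otimes_s R\otimes_s R\to\undR\otimes_s R\cong\undR\otimes_R B$, which unwinds to $1\otimes_s f\otimes_s 1\mapsto\undone\,\partial(f)\otimes_s 1$, the merge map.

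I expect the well-definedness of $\mu_0$ (hence of the merge map) to be the only real obstacle: $\partial$ is not left $R^s$-linear as an operator on $R$ (the twisted Leibniz rule carries the extra factor $s(f)$), so the displayed formula would be meaningless with a plain $\otimes_R$ in place of $\otimes_s=\otimes_{R^s}$ in the middle slot, and the cancellations that rescue it rely precisely on $\partial(R^s)=0$ together with $r\,\undone=\undone\,s(r)$. The remaining three maps are purely formal once their sources and targets are written out as iterated tensor products over $R$ and $R^s$.
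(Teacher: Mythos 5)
The paper disposes of this lemma with ``\emph{Proof} is straightforward,'' so there is no argument to compare against line by line; your write-up simply supplies the details, and it does so correctly. Your key observation—that the first two maps are compositions of the fixed isomorphisms in~\eqref{eq_blue_red}, the third is $\id_R\otimes_s\iota\otimes_s\id_R$ under the unit identification, and the only genuine well-definedness check is the $R^s$-balance for $\mu_0\colon a\otimes_s b\mapsto a\,\undone\,\partial(b)$, which follows from $\partial|_{R^s}=0$, the twisted Leibniz rule, and $r\,\undone=\undone\,s(r)$—is exactly the ``straightforward computation'' the authors had in mind, organized cleanly so that $(R,R)$-bilinearity of the full merge map is obtained for free by tensoring $\mu_0$ with $\id_R$ over $R^s$.
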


\emph{Proof} is straightforward. $\square$

These maps have degrees $0,0,-1,-1$, respectively. 

The shortcuts below will be useful in the sequel. 
\[
\xy (0,0)*{
\tikzdiagc[scale=.45]{
\draw[very thick,Orange,densely dashed] (.125,0) to (.125,1.5);  
\draw[ultra thick,blue] (-.75,-1) to[out=80,in=180] (.125,0) to[out=0,in=100] (1,-1);
\draw[thick, dotted] (-1.25,1.5) to (1.75,1.5);\draw[thick, dotted] (-1.25,-1) to (1.75,-1);
}} \endxy
:=
\xy (0,0)*{
\tikzdiagc[scale=.45]{
\draw[very thick,Orange,densely dashed] (0,0) to[out=150,in=-90]  (-.5,1.5);  
\draw[ultra thick,blue] (0,0)-- (0,.75)node[pos=1, tikzdot]{}; \draw[ultra thick,blue] (-1,-1) -- (0,0); \draw[ultra thick,blue] (1,-1) -- (0,0);
\draw[thick, dotted] (-1.25,1.5) to (1.75,1.5);\draw[thick, dotted] (-1.25,-1) to (1.75,-1);
}} \endxy
\mspace{65mu}
\xy (0,0)*{
\tikzdiagc[scale=.45,yscale=-1]{
\draw[very thick,Orange,densely dashed] (.125,0) to (.125,1.5);  
\draw[ultra thick,blue] (-.75,-1) to[out=80,in=180] (.125,0) to[out=0,in=100] (1,-1);
\draw[thick, dotted] (-1.25,1.5) to (1.75,1.5);\draw[thick, dotted] (-1.25,-1) to (1.75,-1);
}} \endxy
:=
\xy (0,0)*{
\tikzdiagc[scale=.45,yscale=-1]{
\draw[very thick,Orange,densely dashed] (0,1.5) to (0,.65);  
\draw[ultra thick,blue] (0,0)-- (0,.65)node[pos=1, tikzdot]{}; \draw[ultra thick,blue] (-1,-1) -- (0,0); \draw[ultra thick,blue] (1,-1) -- (0,0);
\draw[thick, dotted] (-1.25,1.5) to (1.75,1.5);\draw[thick, dotted] (-1.25,-1) to (1.75,-1);
}} \endxy
\mspace{65mu}
\xy (0,0)*{
\tikzdiagc[scale=.45]{
\draw[very thick,Orange,densely dashed] (0,0) to[out=30,in=-90]  (.75, 1.5);  
\draw[ultra thick,blue] (0,0)-- (0,1.5); \draw[ultra thick,blue] (-1,-1) -- (0,0); \draw[ultra thick,blue] (1,-1) -- (0,0);
\draw[thick, dotted] (-1.25,1.5) to (1.75,1.5);\draw[thick, dotted] (-1.25,-1) to (1.75,-1);
}} \endxy
:=
\xy (0,0)*{
\tikzdiagc[scale=.45]{
\draw[very thick,Orange,densely dashed] (0,0) to[out=135,in=-90] (-.5,.5)  to[out=90,in=-90] (.75, 1.5);  
\draw[ultra thick,blue] (0,0)-- (0,1.5); \draw[ultra thick,blue] (-1,-1) -- (0,0); \draw[ultra thick,blue] (1,-1) -- (0,0);
\draw[thick, dotted] (-1.25,1.5) to (1.75,1.5);\draw[thick, dotted] (-1.25,-1) to (1.75,-1);
}} \endxy
\]

\begin{lem}
The bimodule maps in equations~\eqref{eq:idBP} to~\eqref{eq:mergeB} satisfy the relations below.
\begingroup\allowdisplaybreaks
\begin{gather*}
\xy (0,1)*{
\tikzdiagc[scale=.5]{
\draw[ultra thick,blue] (-.75,-1.5) to[out=90,in=-90] (.6,0) to[out=90,in=-90] (-.75,1.5); 
\draw[very thick,Orange,densely dashed] (.75,-1.5) to[out=90,in=-90] (-.6,0) to[out=90,in=-90] (.75, 1.5); 
\draw[thick, dotted] (-1.25,1.5) to (1.25,1.5);\draw[thick, dotted] (-1.25,-1.5) to (1.25,-1.5);
}} \endxy
 = 
\xy (0,0)*{
\tikzdiagc[scale=.5]{
\draw[ultra thick,blue] (-.75,-1.5) to (-.75,1.5); 
\draw[very thick,Orange,densely dashed] (.75,-1.5) to (.75, 1.5); 
\draw[thick, dotted] (-1.25,1.5) to (1.25,1.5);\draw[thick, dotted] (-1.25,-1.5) to (1.25,-1.5);
}} \endxy
\mspace{80mu}
\xy (0,0)*{
\tikzdiagc[scale=.5,xscale=-1]{
\draw[ultra thick,blue] (-.75,-1.5) to[out=90,in=-90] (.6,0) to[out=90,in=-90] (-.75,1.5); 
\draw[very thick,Orange,densely dashed] (.75,-1.5) to[out=90,in=-90] (-.6,0) to[out=90,in=-90] (.75, 1.5); 
\draw[thick, dotted] (-1.25,1.5) to (1.25,1.5);\draw[thick, dotted] (-1.25,-1.5) to (1.25,-1.5);
}} \endxy
 = 
\xy (0,0)*{
\tikzdiagc[scale=.5,xscale=-1]{
\draw[ultra thick,blue] (-.75,-1.5) to (-.75,1.5); 
\draw[very thick,Orange,densely dashed] (.75,-1.5) to (.75, 1.5); 
\draw[thick, dotted] (-1.25,1.5) to (1.25,1.5);\draw[thick, dotted] (-1.25,-1.5) to (1.25,-1.5);
}} \endxy
\\[1ex] 
\xy (0,1)*{
\tikzdiagc[scale=.5]{
\draw[ultra thick,blue] (-1,-1)-- (.5, .5)node[pos=1, tikzdot]{}; 
\draw[very thick,Orange,densely dashed] (1,-1)-- (-1, 1);  
\draw[thick, dotted] (-1.25,1) to (1.25,1);\draw[thick, dotted] (-1.25,-1) to (1.25,-1);
}} \endxy
=
\xy (0,0)*{
\tikzdiagc[scale=.5]{
\draw[ultra thick,blue] (-1,-1)-- (-.5, -.5)node[pos=1, tikzdot]{}; 
\draw[very thick,Orange,densely dashed] (1,-1)-- (-1, 1);  
\draw[thick, dotted] (-1.25,1) to (1.25,1);\draw[thick, dotted] (-1.25,-1) to (1.25,-1);
}} \endxy
\mspace{80mu}
\xy (0,0)*{
\tikzdiagc[scale=.5,xscale=-1]{
\draw[ultra thick,blue] (-1,-1)-- (.5, .5)node[pos=1, tikzdot]{}; 
\draw[very thick,Orange,densely dashed] (1,-1)-- (-1, 1);  
\draw[thick, dotted] (-1.25,1) to (1.25,1);\draw[thick, dotted] (-1.25,-1) to (1.25,-1);
}} \endxy
=
\xy (0,0)*{
\tikzdiagc[scale=.5,xscale=-1]{
\draw[ultra thick,blue] (-1,-1)-- (-.5, -.5)node[pos=1, tikzdot]{}; 
\draw[very thick,Orange,densely dashed] (1,-1)-- (-1, 1);  
\draw[thick, dotted] (-1.25,1) to (1.25,1);\draw[thick, dotted] (-1.25,-1) to (1.25,-1);
}} \endxy
\\[1ex]
\xy (0,1)*{
\tikzdiagc[scale=.5]{
\draw[very thick,densely dashed,Orange] (-.5,0) to[out=-90,in=180] (-.1,-.75) to[out=0,in=-90] (.5,.25) to[out=90,in=-90] (-.5,1.5);
\draw[ultra thick,blue] (-.5,0) -- (-.5, .25)node[pos=0, tikzdot]{};
\draw[ultra thick,blue] (-.5,.25) to[out=90,in=-90] (.5,1.5);
\draw[thick, dotted] (-1,1.5) to (1,1.5);
\draw[thick, dotted] (-1,-1) to (1,-1);
}} \endxy
= -
\xy (0,0)*{
\tikzdiagc[scale=.5,xscale=-1]{
\draw[very thick,densely dashed,Orange] (-.5,0) to[out=-90,in=180] (-.1,-.75) to[out=0,in=-90] (.5,.25) to[out=90,in=-90] (.5,1.5);
\draw[ultra thick,blue] (-.5,0) -- (-.5, .25)node[pos=0, tikzdot]{};
\draw[ultra thick,blue] (-.5,.25) to[out=90,in=-90] (-.5,1.5);
\draw[thick, dotted] (-1,1.5) to (1,1.5);
\draw[thick, dotted] (-1,-1) to (1,-1);
}} \endxy
\\[1ex] 
\xy (0,1)*{
\tikzdiagc[scale=.5]{
    \draw[ultra thick,blue,] (0,-1)-- (0,0);
    \draw[ultra thick,blue] (0,0) -- (-.5,.5)node[pos=1, tikzdot]{};
    \draw[ultra thick,blue] (0,0) to[out=45,in=-90] (.5,1.25);
\draw[thick, dotted] (-1.25,1.25) to (1.25,1.25);\draw[thick, dotted] (-1.25,-1) to (1.25,-1);
}} \endxy
=
\xy (0,0)*{
\tikzdiagc[scale=.5]{
    \draw[ultra thick,blue] (0,-1) -- (0,1.25);
\draw[thick, dotted] (-1.25,1.25) to (1.25,1.25);\draw[thick, dotted] (-1.25,-1) to (1.25,-1);
}} \endxy
=
\xy (0,0)*{
\tikzdiagc[scale=.5,xscale=-1]{
    \draw[ultra thick,blue,] (0,-1)-- (0,0);
    \draw[ultra thick,blue] (0,0) -- (-.5,.5)node[pos=1, tikzdot]{};
    \draw[ultra thick,blue] (0,0) to[out=45,in=-90] (.5,1.25);
\draw[thick, dotted] (-1.25,1.25) to (1.25,1.25);\draw[thick, dotted] (-1.25,-1) to (1.25,-1);
}} \endxy
\\[1ex]
\xy (0,1)*{
\tikzdiagc[scale=.5]{
\draw[thick,Orange,densely dashed] (-.5,-.5) to[out=-135,in=0] (-1,-.75) to[out=180,in=180] (-.75,.5) to[out=0,in=135] (0,0); 
\draw[ultra thick,blue,] (0,1)-- (0,0);
\draw[ultra thick,blue] (0,0) -- (-.5,-.5)node[pos=1, tikzdot]{};
\draw[ultra thick,blue] (0,0) to[out=-45,in=90] (.5,-1.25);
\draw[thick, dotted] (-1.25,1) to (1.25,1);\draw[thick, dotted] (-1.25,-1.25) to (1.25,-1.25);
}} \endxy
=
\xy (0,0)*{
\tikzdiagc[scale=.5]{
    \draw[ultra thick,blue] (0,-1.25) -- (0,1);
\draw[thick, dotted] (-1.25,1) to (1.25,1);\draw[thick, dotted] (-1.25,-1.25) to (1.25,-1.25);
}} \endxy
=\ -\  
\xy (0,0)*{
\tikzdiagc[scale=.5,xscale=-1]{
\draw[thick,Orange,densely dashed] (-.5,-.5) to[out=-135,in=0] (-1,-.75) to[out=180,in=180] (-.75,.5) to[out=0,in=135] (0,0); 
\draw[ultra thick,blue,] (0,1)-- (0,0);
\draw[ultra thick,blue] (0,0) -- (-.5,-.5)node[pos=1, tikzdot]{};
\draw[ultra thick,blue] (0,0) to[out=-45,in=90] (.5,-1.25);
\draw[thick, dotted] (-1.25,1) to (1.25,1);\draw[thick, dotted] (-1.25,-1.25) to (1.25,-1.25);
}} \endxy
\mspace{70mu}
\xy (0,0)*{
\tikzdiagc[scale=.5]{
\draw[thick,Orange,densely dashed] (.5,-.5) to[out=-45,in=0] (.5,-1) to[out=180,in=180] (-.5,.4) to[out=0,in=135] (0,0); 
\draw[ultra thick,blue,] (0,1)-- (0,0);
\draw[ultra thick,blue] (0,0) -- (.5,-.5)node[pos=1, tikzdot]{};
\draw[ultra thick,blue] (0,0) to[out=-135,in=90] (-.5,-1.25);
\draw[thick, dotted] (-1.25,1) to (1.25,1);\draw[thick, dotted] (-1.25,-1.25) to (1.25,-1.25);
}} \endxy
=
-\ 
\xy (0,0)*{
\tikzdiagc[scale=.5]{
    \draw[ultra thick,blue] (0,-1.25) -- (0,1);
\draw[thick, dotted] (-1.25,1) to (1.25,1);\draw[thick, dotted] (-1.25,-1.25) to (1.25,-1.25);
}} \endxy
\\[1ex]
\xy (0,1)*{
\tikzdiagc[scale=.5]{
\draw[ultra thick,blue,] (0,-1)-- (0,0);
\draw[ultra thick,blue] (0,0) to[out=45,in=-120] (1,1.25);
\draw[ultra thick,blue] (0,0) -- (-.5,.5);
\draw[ultra thick,blue] (-.5,.5) to[out=45,in=-120] (0,1.25);
\draw[ultra thick,blue] (-.5,.5) to[out=135,in=-70] (-1,1.25);
\draw[thick, dotted] (-1.25,1.25) to (1.25,1.25);\draw[thick, dotted] (-1.25,-1) to (1.25,-1);
}} \endxy
= 
\xy (0,0)*{
\tikzdiagc[scale=.5,xscale=-1]{
\draw[ultra thick,blue,] (0,-1)-- (0,0);
\draw[ultra thick,blue] (0,0) to[out=45,in=-120] (1,1.25);
\draw[ultra thick,blue] (0,0) -- (-.5,.5);
\draw[ultra thick,blue] (-.5,.5) to[out=45,in=-120] (0,1.25);
\draw[ultra thick,blue] (-.5,.5) to[out=135,in=-70] (-1,1.25);
\draw[thick, dotted] (-1.25,1.25) to (1.25,1.25);\draw[thick, dotted] (-1.25,-1) to (1.25,-1);
}} \endxy
\mspace{50mu}
\xy (0,0)*{
\tikzdiagc[scale=.5,yscale=-1]{
\draw[thick,Orange,densely dashed] (0,0) to[out=-135,in=0] (-.6,-.35)  to[out=180,in=-135] (-.5,.5); 
\draw[ultra thick,blue,] (0,-1)-- (0,0);
\draw[ultra thick,blue] (0,0) to[out=45,in=-120] (1,1.25);
\draw[ultra thick,blue] (0,0) -- (-.5,.5);
\draw[ultra thick,blue] (-.5,.5) to[out=45,in=-120] (0,1.25);
\draw[ultra thick,blue] (-.5,.5) to[out=135,in=-70] (-1,1.25);
\draw[thick, dotted] (-1.25,1.25) to (1.25,1.25);\draw[thick, dotted] (-1.25,-1) to (1.25,-1);
}} \endxy
=
\xy (0,0)*{
\tikzdiagc[scale=.5,yscale=-1,xscale=-1]{
\draw[thick,Orange,densely dashed] (0,0) to[out=-135,in=0] (-.6,-.35)  to[out=180,in=-135] (-.5,.5); 
\draw[ultra thick,blue,] (0,-1)-- (0,0);
\draw[ultra thick,blue] (0,0) to[out=45,in=-120] (1,1.25);
\draw[ultra thick,blue] (0,0) -- (-.5,.5);
\draw[ultra thick,blue] (-.5,.5) to[out=45,in=-120] (0,1.25);
\draw[ultra thick,blue] (-.5,.5) to[out=135,in=-70] (-1,1.25);
\draw[thick, dotted] (-1.25,1.25) to (1.25,1.25);\draw[thick, dotted] (-1.25,-1) to (1.25,-1);
}} \endxy
\mspace{50mu}
\xy (0,0)*{
\tikzdiagc[scale=.5]{
\draw[very thick,Orange,densely dashed] (-1,.5) to[out=20,in=-90] (-.25,1.25);
\draw[ultra thick,blue,] (0,-1)-- (0,-.5);
\draw[ultra thick,blue,] (0,-.5)-- (-1,.5);
\draw[ultra thick,blue] (-1.5,-1) to[out=90,in=-135] (-1,.5);
\draw[ultra thick,blue] (0,-.5) to[out=45,in=-90] (.5,1.25);
\draw[ultra thick,blue,] (-1,.5) -- (-1,1.25);
\draw[thick, dotted] (-1.25,1.25) to (1.25,1.25);\draw[thick, dotted] (-1.25,-1) to (1.25,-1);
}} \endxy
=
\xy (0,0)*{
\tikzdiagc[scale=.5,xscale=-1]{
\draw[very thick,Orange,densely dashed] (-1,.5) to[out=20,in=-90] (-.25,1.25);
\draw[ultra thick,blue,] (0,-1)-- (0,-.5);
\draw[ultra thick,blue,] (0,-.5)-- (-1,.5);
\draw[ultra thick,blue] (-1.5,-1) to[out=90,in=-135] (-1,.5);
\draw[ultra thick,blue] (0,-.5) to[out=45,in=-90] (.5,1.25);
\draw[ultra thick,blue,] (-1,.5) -- (-1,1.25);
\draw[thick, dotted] (-1.25,1.25) to (1.25,1.25);\draw[thick, dotted] (-1.25,-1) to (1.25,-1);
}} \endxy
\end{gather*}
\endgroup

\end{lem}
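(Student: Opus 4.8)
The plan is to verify every displayed identity by evaluating both sides, as $(R,R)$-bimodule maps, on a single cyclic generator. Each bimodule that occurs as a source is cyclic over $(R,R)$: $B=R\otimes_{R^s}R\{-1\}$ is generated by $1\otimes_s 1$; the iterated products $B\otimes_R B\cong R\otimes_{R^s}R\otimes_{R^s}R\{-2\}$, $B\otimes_R B\otimes_R B$, etc., by $1\otimes_s1\otimes_s1$, $1\otimes_s1\otimes_s1\otimes_s1$; and $\undR\otimes_R B\cong\undB\cong B\otimes_R\undR$ by $\undone\otimes 1\otimes_s1$ via \eqref{eq_blue_red}. So for each relation it suffices to compute the image of the relevant generator under each composite, using the explicit formulas for $m$ and $\Delta$ in \eqref{eq:enddot}--\eqref{eq:startdot}, the four maps of the previous lemma, the (degree-zero) orange dashed cup and cap maps, and the shortcut composites introduced just above the statement, and then to compare. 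Since the relations come in mirror pairs, one reflection computation handles two identities.

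First I would dispatch the sign-free relations. The two Reidemeister-II identities of the first row are immediate: the crossings send $1\otimes_s1\otimes\undone\mapsto\undone\otimes1\otimes_s1\mapsto1\otimes_s1\otimes\undone$, so either composite fixes its generator. The dot-sliding relations of the second row both send $1\otimes_s1\otimes\undone$ to $\undone$ (pull $m$ across the crossing on one side, apply it before on the other), with no sign. The two counit relations of the fourth row are, on $1\otimes_s1$, just $m(1\otimes_s1)\otimes_s1=1\otimes_s1=1\otimes_s m(1\otimes_s1)$. The coassociativity relation of the last row holds strictly because the split map is $1\otimes_s1\mapsto1\otimes_s1\otimes_s1$, and likewise its orange-decorated variant, the orange strand merely recording the (rebracketing-invariant) position of a copy of $\undone^s$.

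The relations that genuinely carry signs are those of the third row, the fifth row, and the last relation of the last row. For the third row one evaluates the blue start-dot $\Delta(\undone)=1\otimes_s x_2-x_2\otimes_s1=1\otimes_s x_1-x_1\otimes_s1$ (well-defined by \eqref{eq_x_onetwo}), closes one side with an orange cup, and uses $f\undone=\undone s(f)$ together with $s(x_i)=-x_{s(i)}$; the minus sign relating the two sides is exactly the sign incurred in commuting $\undone$ past a degree-one generator. The fifth-row relations are compositions of the shortcut maps with $m$, $\Delta$ and the orange cups and caps; on the generator they collapse, by the already-established counit and Reidemeister-II relations, to the third-row computation, and the displayed signs are inherited from it. The remaining (ladder) relation of the last row is checked on $1\otimes_s1\otimes1\otimes_s1$ using the merge formula $1\otimes_sf\otimes_s1\mapsto\undone\otimes\partial(f)\otimes_s1$, the values $\partial(x_i)=1$, the twisted Leibniz rule, and \eqref{eq_x_onetwo}. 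The one real obstacle throughout is sign bookkeeping: each passage of $\undone$ or $\undone^s$ past a generator $x_i$ contributes the sign of $f\undone=\undone s(f)$, and each application of $\partial$ twists by $s$ through the Leibniz rule. I would therefore put every map into a normal form ($1\otimes_s(\cdot)\otimes_s1$, $\undone\otimes(\cdot)\otimes_s1$, and so on) before comparing, so that the stated identities, including the precise minus signs, become term-by-term verifications — after which the argument is routine, as for the previous lemma.
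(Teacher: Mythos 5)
Your plan — reduce each diagram to an explicit $(R,R)$-bimodule map by evaluating on the cyclic generator ($1\otimes_s1$, $1\otimes_s1\otimes_s1$, $\undone\otimes1\otimes_s1$, etc.), then compare normal forms while tracking the signs coming from $f\undone=\undone s(f)$ and the twisted Leibniz rule — is exactly the "straightforward computation" the paper invokes, and your sample checks (e.g.\ the third-row cup relation, where the crossing turns $1\otimes_s x_1-x_1\otimes_s1$ into $-\undone\otimes(1\otimes_s x_2-x_2\otimes_s1)$) confirm the bookkeeping goes through. The only quibble is a small misstatement: none of the final-row relations carries an overall minus sign in the statement, so "the last relation of the last row" is sign-free in the end, even though (as you note) the internal verification does pass through intermediate signs that must cancel.
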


A proof is given by a straighforward computation. $\square$

Dashed orange lines in the present paper are similar to dashed blue lines in Ellis-Lauda's categorification of odd quantum $sl(2)$, see~\cite{EL}. (Compare adjointness relations (3.13), (3.14) in that paper with the  adjointness in Proposition~\ref{prop_adj_minus}.)

The difference of the present diagrammatical calculus of blue lines (for $B$) and dashed red lines (for $\undR$) from the earlier calculus in Section~\ref{subsec-biadjointness} is that blue regions (for the category $R^s\dmod$) are now hidden inside blue lines and graphs. Thickening these lines and graphs recovers the earlier diagrammatics, see equation~\ref{eq_convert} below. 
\begin{equation}\label{eq_convert}
  \addtolength{\tabcolsep}{5.5pt}
  \begin{tabular}{ccccc}
\xy (0,1)*{
\tikzdiagc[scale=.45]{
  \draw[ultra thick,blue] (0,-1) -- (0, 0)node[pos=1, tikzdot]{};
\draw[thick, dotted] (-1,1) to (1,1);
\draw[thick, dotted] (-1,-1) to (1,-1);
}} \endxy
&
\xy (0,1)*{
\tikzdiagc[scale=.45]{
\draw[very thick, densely dashed,Orange] (0,-1) -- (0,0);
\draw[ultra thick,blue] (0,0) -- (0, 1)node[pos=0, tikzdot]{};
\draw[thick, dotted] (-1,1) to (1,1);
\draw[thick, dotted] (-1,-1) to (1,-1);
}} \endxy\ 
&
\xy (0,1)*{
\tikzdiagc[scale=.45]{
\draw[ultra thick,blue] (-1,-1)-- (1, 1); 
\draw[very thick,Orange,densely dashed] (1,-1)-- (-1, 1);  
\draw[thick, dotted] (-1.25,1) to (1.25,1);\draw[thick, dotted] (-1.25,-1) to (1.25,-1);
}} \endxy
&
\xy (0,1)*{
\tikzdiagc[scale=.45]{
   \draw[ultra thick,blue] (0,0)-- (0, -1); \draw[ultra thick,blue] (-1,1) -- (0,0); \draw[ultra thick,blue] (1,1) -- (0,0);
 \draw[thick, dotted] (-1.25,1) to (1.25,1);\draw[thick, dotted] (-1.25,-1) to (1.25,-1);
}} \endxy
&
\xy (0,1)*{
\tikzdiagc[scale=.45]{
\draw[very thick,Orange,densely dashed] (0,0) to[out=160,in=-90] (-1, 1);  
   \draw[ultra thick,blue] (0,0)-- (0,1); \draw[ultra thick,blue] (-1,-1) -- (0,0); \draw[ultra thick,blue] (1,-1) -- (0,0);
 \draw[thick, dotted] (-1.25,1) to (1.25,1);\draw[thick, dotted] (-1.25,-1) to (1.25,-1);
}} \endxy
\\[3ex]
  \xy (0,0)*{
\tikzdiagc[scale=.5]{
\draw[implies-,double equal sign distance]  (2,-.4) to (2,.4);
}} \endxy  
  &
  \xy (0,0)*{
\tikzdiagc[scale=.5]{
\draw[implies-,double equal sign distance]  (2,-.4) to (2,.4);
}} \endxy  
  &
     \xy (0,0)*{
\tikzdiagc[scale=.5]{
\draw[implies-,double equal sign distance]  (2,-.4) to (2,.4);
}} \endxy  
  &
  \xy (0,0)*{
\tikzdiagc[scale=.5]{
\draw[implies-,double equal sign distance]  (2,-.4) to (2,.4);
}} \endxy  
  &
  \xy (0,0)*{
\tikzdiagc[scale=.5]{
\draw[implies-,double equal sign distance]  (2,-.4) to (2,.4);
     }} \endxy
     \\[3ex]
\xy (0,0)*{
\tikzdiagc[scale=.6,xscale=-1,yscale=-1]{
\fill[blue!10] (-.5,1) to[out=-90,in=180] (0,-.25) to[out=0,in=-90] (.5,1)--cycle;
\draw[thick] (-.5,1) to[out=-90,in=180] (0,-.25) to[out=0,in=-90] (.5,1);
\draw[thick,to-] (.49,.3) -- (.49,.4);  
\draw[thick, dotted] (-1.25,1) to (1.25,1);\draw[thick, dotted] (-1.25,-1) to (1.25,-1);
  }} \endxy
  &
  \xy (0,0)*{
\tikzdiagc[scale=.6]{
\fill[blue!10] (-.5,1) to[out=-90,in=180] (0,-.25) to[out=0,in=-90] (.5,1)--cycle;
\draw[thick,Orange,densely dashed] (0,-.99) to (0,-.25); 
\draw[thick] (-.5,1) to[out=-90,in=180] (0,-.25) to[out=0,in=-90] (.5,1);
\draw[thick,to-] (.49,.3) -- (.49,.4);  
\draw[fill=black] (0,-.25) circle (1.5pt);
\draw[thick, dotted] (-1.25,1) to (1.25,1);\draw[thick, dotted] (-1.25,-1) to (1.25,-1);
}} \endxy  
    &
\xy (0,0)*{
\tikzdiagc[scale=.6]{
\fill[blue!10] (-1,-1) --(.25,1) -- (1,1) to (-.25,-1)--cycle;
\draw[thick,Orange,densely dashed] (-1,1)-- (1,-1);  
\draw[thick] (-1,-1) -- (.25,1);
\draw[thick,-to] (-.7,-.5) -- (-.675,-.45);  
\draw[thick] (-.25,-1) -- (1,1);
\draw[thick,-to] (.01,-.6) -- (-.02,-.65);  
\draw[thick, dotted] (-1.25,1) to (1.25,1);\draw[thick, dotted] (-1.25,-1) to (1.25,-1);
}} \endxy
    &
\xy (0,0)*{
\tikzdiagc[scale=.6,yscale=1]{
\fill[blue!10] (-.3,-1) to[out=90,in=-90] (-1,1) to (1,1) to[out=-90,in=90] (.3,-1)--cycle;
\fill[white]  (-.5,1) to[out=-85,in=180] (0,.25) to[out=0,in=-95] (.5,1)--cycle;
\draw[thick] (-.5,1) to[out=-85,in=180] (0,.25) to[out=0,in=-95] (.5,1);
\draw[thick,-to] (.08,.25) -- (.09,.25);  
\draw[thick] (-.3,-1) to[out=90,in=-90] (-1,1);
\draw[thick,-to] (-.64,0) -- (-.66,.05); 
\draw[thick] ( .3,-1) to[out=90,in=-90] (1,1);
\draw[thick,to-] (.58,-.15) -- (.70,.1); 
\draw[thick, dotted] (-1.25,1) to (1.25,1);\draw[thick, dotted] (-1.25,-1) to (1.25,-1);
}} \endxy
    & 
\xy (0,0)*{
\tikzdiagc[scale=.6,xscale=-1,yscale=-1]{
\fill[blue!10] (-.3,-1) to[out=90,in=-90] (-1,1) to (1,1) to[out=-90,in=90] (.3,-1)--cycle;
\fill[white]  (-.5,1) to[out=-85,in=180] (0,.25) to[out=0,in=-95] (.5,1)--cycle;
\draw[thick,Orange,densely dashed] (0,.25) to[out=-90,in=120] (1,-1); 
\draw[thick] (-.5,1) to[out=-85,in=180] (0,.25) to[out=0,in=-95] (.5,1);
\draw[thick,-to] (.45,.7) -- (.48,.8);  
\draw[thick] (-.3,-1) to[out=90,in=-90] (-1,1);
\draw[thick,-to] (-.64,0) -- (-.66,.05); 
\draw[thick] ( .3,-1) to[out=90,in=-90] (1,1);
\draw[thick,to-] (.58,-.15) -- (.70,.1); 
\draw[fill=black] (0,.25) circle (1.5pt);
\draw[thick, dotted] (-1.25,1) to (1.25,1);\draw[thick, dotted] (-1.25,-1) to (1.25,-1);
}} \endxy
\end{tabular}
\addtolength{\tabcolsep}{1pt}
\end{equation}

\begin{prop}\label{prop:decBB}
The following equality holds 

\begin{equation}\label{eq:idBBdec}
\xy (0,0)*{
\tikzdiagc[scale=.45]{
\draw[ultra thick,blue] (-.5, -1.5) -- (-.5,1);  
\draw[ultra thick,blue] (.75,-1.5) -- (.75,1);
\draw[thick, dotted] (-1.25,1) to (1.5,1);\draw[thick, dotted] (-1.25,-1.5) to (1.5,-1.5);
}} \endxy
=
\xy (0,0)*{
\tikzdiagc[scale=.45]{
\draw[ultra thick,blue] (0,0)-- (0, -1.5); \draw[ultra thick,blue] (-1,1) -- (0,0); \draw[ultra thick,blue] (1,1) -- (0,0);
\draw[ultra thick,blue] (1,-1.5) -- (1,-.5)node[pos=1, tikzdot]{};
\draw[thick, dotted] (-1.25,1) to (1.5,1);\draw[thick, dotted] (-1.25,-1.5) to (1.5,-1.5);
}} \endxy
\ - \ 
\xy (0,0)*{
\tikzdiagc[scale=.45]{
\draw[very thick,Orange,densely dashed] (0,0) to[out=30,in=-90]  (1, .85);  
\draw[ultra thick,blue] (0,0)-- (0,1.5); \draw[ultra thick,blue] (-1,-1) -- (0,0); \draw[ultra thick,blue] (1,-1) -- (0,0);
\draw[ultra thick,blue] (1,1.5) -- (1,.85)node[pos=1, tikzdot]{};
\draw[thick, dotted] (-1.25,1.5) to (1.75,1.5);\draw[thick, dotted] (-1.25,-1) to (1.75,-1);
}} \endxy
\end{equation}
Moreover, both terms on the right hand side are orthogonal idempotents.
\end{prop}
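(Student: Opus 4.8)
Write $p$ for the first diagram on the right-hand side of~\eqref{eq:idBBdec} and $p'$ for the \emph{negative} of the second one, so that~\eqref{eq:idBBdec} is the statement $\id_{B\otimes_R B}=p+p'$, and the second assertion is that $p$ and $p'$ are orthogonal idempotents. Reading the two diagrams off~\eqref{eq:enddot},~\eqref{eq:startdot},~\eqref{eq:splitB},~\eqref{eq:mergeB} and the crossing~\eqref{eq_blue_red}, they become the degree zero $(R,R)$-bimodule endomorphisms
\[
p=\sigma\circ(\id_B\otimes m),\qquad p'=-(\id_B\otimes\Delta)\circ\nu
\]
of $B\otimes_R B\cong R\otimes_s R\otimes_s R\{-2\}$, where $m\colon B\to R\{-1\}$ is the end dot, $\Delta\colon\undR\to B$ is the start dot, $\sigma\colon B\to B\otimes_R B$ is the split, and $\nu\colon B\otimes_R B\to B\otimes_R\undR$ is the merge of~\eqref{eq:mergeB} followed by the sliding isomorphism~\eqref{eq_blue_red}; in $\id_B\otimes m$ and $\id_B\otimes\Delta$ the nontrivial factor acts on the right, as dictated by the pictures.

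The plan is to first prove the equality $p+p'=\id$. As $p$, $p'$ and $\id$ are all $(R,R)$-bimodule morphisms, and since ${}_{R^s}R=R^s\cdot 1\oplus R^s\cdot x_1$, it is enough to evaluate on the two bimodule generators $\omega_0=1\otimes_s 1\otimes_s 1$ and $\omega_1=1\otimes_s x_1\otimes_s 1$ of $B\otimes_R B$. Using $\partial(1)=0$, $\partial(x_1)=1$ and $\Delta(\undone)=1\otimes_s x_1-x_1\otimes_s 1$, a short computation gives $p(\omega_0)=\omega_0$, $p'(\omega_0)=0$, $p(\omega_1)=1\otimes_s 1\otimes_s x_1$ and $p'(\omega_1)=\omega_1-1\otimes_s 1\otimes_s x_1$, so that $(p+p')(\omega_i)=\omega_i$ for $i=0,1$; this proves~\eqref{eq:idBBdec}.

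Next I would verify that $p$ is idempotent and then deduce the rest formally. The zig-zag relation in the preceding Lemma, namely that a split followed by an end dot on one of its two output strands is the identity of $B$, says precisely $(\id_B\otimes m)\circ\sigma=\id_B$, whence $p^2=\sigma\circ\bigl((\id_B\otimes m)\circ\sigma\bigr)\circ(\id_B\otimes m)=\sigma\circ(\id_B\otimes m)=p$. Since also $p+p'=\id$, we get $p'=\id-p$, hence $(p')^2=\id-2p+p^2=\id-p=p'$ and $pp'=p'p=p-p^2=0$; thus $p$ and $p'$ are orthogonal idempotents. (Conceptually these are the projections onto the summands $B\{1\}$ and $\undB\{-1\}$ of $B\otimes_R B$ that come from tensoring the short exact sequence~\eqref{exact_seq_1} on the left with $B$, which is free as a right $R$-module.)

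The main thing to get right is not the algebra but the passage from diagrams to bimodule maps in the first step: the sign in front of the second diagram of~\eqref{eq:idBBdec}, the $s$-twists and possible signs inside the crossing~\eqref{eq_blue_red}, and the bookkeeping of which tensor factor each cup, cap and trivalent vertex acts on. Once this dictionary is pinned down, the identity~\eqref{eq:idBBdec} reduces to the two evaluations on $\omega_0$ and $\omega_1$, the idempotency of $p$ is immediate from the zig-zag relation, and orthogonality is then purely formal.
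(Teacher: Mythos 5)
Your proof is correct. The logical structure matches the proposition: you show $p+p'=\id_{B\otimes B}$, show $p^2=p$, and deduce orthogonality of $p$ and $p'=\id-p$ formally. The identification of the two diagrams with $p=\sigma\circ(\id_B\otimes m)$ and $p'=-(\id_B\otimes\Delta)\circ\nu$ is right, the two elements $\omega_0=1\otimes_s1\otimes_s1$ and $\omega_1=1\otimes_s x_1\otimes_s1$ do generate $B\otimes_R B$ as an $(R,R)$-bimodule (because ${}_{R^s}R=R^s\oplus R^s x_1$), and your evaluations check out --- in particular $p'(\omega_1)=1\otimes_s x_1\otimes_s1-1\otimes_s1\otimes_s x_1$, which coincides with $1\otimes_s x_2\otimes_s1-1\otimes_s1\otimes_s x_2$ because $x_1-x_2\in R^s$ slides through $\otimes_s$.

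The route differs from the paper's in two ways worth noting. First, for $p+p'=\id$ the paper evaluates on a general middle slot $1\otimes_s f\otimes_s1$ and appeals to the decomposition $f=P(f)+x_2\partial f$ with $P(f)=f-x_2\partial f\in R^s$; you instead evaluate only on the two bimodule generators, which avoids that trick and is arguably more economical (though it requires first noting the generating statement). Second, the paper asserts orthogonal idempotency by a separate ``direct computation,'' whereas you prove only $p^2=p$ (a single zig-zag) and obtain $p'^2=p'$ and $pp'=p'p=0$ formally from $p+p'=\id$. Your version reduces the diagrammatic checking to the bare minimum and makes the logical dependency cleaner; the paper's $P(f)$ computation, on the other hand, is exactly the ingredient that generalizes to the odd nilHecke-style arguments referenced in \cite{EKh}, so both presentations have their uses.
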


\begin{proof}
A direct computation shows that the two terms on the right hand side are orthogonal idempotents.
To show that their sum is the identity of $B\otimes_s B$ note that for $f\in R$ the element $P(f):=f-x_2\partial f$ is in $R^s$ since $\partial P(f)=0$. This allows writing $f=P(f)+x_2\partial(f)$ (a similar argument appears in \cite[\S 2.2]{EKh}). 
We then compute
\begin{align*}
\Biggl(\ 
\xy (0,0)*{
\tikzdiagc[scale=.4]{
\draw[ultra thick,blue] (0,0)-- (0, -1.5); \draw[ultra thick,blue] (-1,1) -- (0,0); \draw[ultra thick,blue] (1,1) -- (0,0);
\draw[ultra thick,blue] (1,-1.5) -- (1,-.5)node[pos=1, tikzdot]{};
\draw[thick, dotted] (-1.25,1) to (1.5,1);\draw[thick, dotted] (-1.25,-1.5) to (1.5,-1.5);
}} \endxy
\ - \ 
\xy (0,0)*{
\tikzdiagc[scale=.4]{
\draw[very thick,Orange,densely dashed] (0,0) to[out=30,in=-90]  (1, .85);  
\draw[ultra thick,blue] (0,0)-- (0,1.5); \draw[ultra thick,blue] (-1,-1) -- (0,0); \draw[ultra thick,blue] (1,-1) -- (0,0);
\draw[ultra thick,blue] (1,1.5) -- (1,.85)node[pos=1, tikzdot]{};
\draw[thick, dotted] (-1.25,1.5) to (1.75,1.5);\draw[thick, dotted] (-1.25,-1) to (1.75,-1);
}} \endxy\ 
\Biggr)&
(1\otimes_s f \otimes_s 1)
\\
  &= 1\otimes_s 1 \otimes_s f - 1\otimes_s 1 \otimes_s x_2\partial f + 1\otimes_s x_2\partial f \otimes_s f
\\
&= 1\otimes_s 1 \otimes_s P(f) + 1\otimes_s x_2\partial f \otimes_s 1
\\
&= 1\otimes_s P(f) \otimes_s 1 + 1\otimes_s x_2\partial f \otimes_s 1
= 1\otimes_s f \otimes_s 1 ,
\end{align*}
as claimed. 
\end{proof}

\begin{cor}\label{cor_dir_sum} 
There are direct sum decompositions
\begin{eqnarray}\label{eq_dirs1}
B\otimes B & \cong  & B\{-1\} \oplus \undB\{1\} \ \cong \ \undB\otimes \undB,   \\ \label{eq_dirs2}
B\otimes\undB & \cong & B\{1\}\oplus \undB\{-1\} \ \cong \ \undB\otimes B . 
\end{eqnarray}
\end{cor}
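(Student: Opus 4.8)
The plan is to extract the first isomorphism in \eqref{eq_dirs1} from Proposition~\ref{prop:decBB}, and then obtain the remaining three decompositions formally by sliding the invertible bimodule $\undR$ through it.

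\emph{Step 1: $B\otimes_R B\cong B\{-1\}\oplus\undB\{1\}$.} Proposition~\ref{prop:decBB} writes $\id_{B\otimes_R B}$ as a sum $e_B+e_{\undB}$ of two orthogonal idempotent endomorphisms, namely (up to sign) the two terms of \eqref{eq:idBBdec}. Since the category of graded $R$-bimodules is idempotent complete, this already gives $B\otimes_R B\cong\mathrm{im}(e_B)\oplus\mathrm{im}(e_{\undB})$, so it only remains to identify the two summands. I would note that $e_B$ factors by construction as $B\otimes_R B\xrightarrow{\pi_B}B\{-1\}\xrightarrow{\iota_B}B\otimes_R B$, where $\pi_B$ applies the dot map \eqref{eq:enddot} to the right tensor factor and $\iota_B$ is the split map \eqref{eq:splitB}, and that the retraction $\pi_B\circ\iota_B=\id_{B\{-1\}}$ is one of the relations recorded in the Lemma above (a split followed by a dot on one of its legs equals the identity strand). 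Symmetrically $e_{\undB}=\iota_{\undB}\circ\pi_{\undB}$ factors through $\undB\{1\}$ via the merge map \eqref{eq:mergeB} and a co-merge built from \eqref{eq:startdot}, with $\pi_{\undB}\circ\iota_{\undB}=\id_{\undB\{1\}}$ again a listed relation. Hence $\mathrm{im}(e_B)\cong B\{-1\}$ and $\mathrm{im}(e_{\undB})\cong\undB\{1\}$. (As a sanity check on the shifts: tensoring \eqref{exact_seq_1} on the right by the flat left $R$-module $B$, using \eqref{eq_blue_red} and $R\otimes_R B\cong B$, yields $0\to\undB\{1\}\to B\otimes_R B\to B\{-1\}\to 0$, which $e_B$ splits.)

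\emph{Step 2: the other three decompositions.} By \eqref{eq_iso_two_undR_1} we have $\undR\otimes_R\undR\cong R$, so $\undR$ is an invertible object of the monoidal category of graded $R$-bimodules and the functors $\undR\otimes_R(-)$, $(-)\otimes_R\undR$ and $\undR\otimes_R(-)\otimes_R\undR$ are autoequivalences; in particular they commute with finite direct sums and grading shifts. Applying $(-)\otimes_R\undR$ to the isomorphism of Step~1 and using $B\otimes_R\undR\cong\undB$ (from \eqref{eq_blue_red}) together with $\undB\otimes_R\undR\cong B\otimes_R\undR\otimes_R\undR\cong B$, one obtains $B\otimes_R\undB\cong\undB\{-1\}\oplus B\{1\}$. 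Applying $\undR\otimes_R(-)$ instead, with $\undR\otimes_R B\cong\undB$ and $\undR\otimes_R\undB\cong\undR\otimes_R\undR\otimes_R B\cong B$, gives $\undB\otimes_R B\cong\undB\{-1\}\oplus B\{1\}$; together these are \eqref{eq_dirs2}. Finally, applying $\undR\otimes_R(-)\otimes_R\undR$ and simplifying $\undR\otimes_R(B\otimes_R B)\otimes_R\undR\cong\undB\otimes_R\undB$, $\undR\otimes_R B\otimes_R\undR\cong B$ and $\undR\otimes_R\undB\otimes_R\undR\cong\undB$ by the same substitutions, one gets $\undB\otimes_R\undB\cong B\{-1\}\oplus\undB\{1\}$, completing \eqref{eq_dirs1}.

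There is essentially no serious obstacle here: the entire content sits in Proposition~\ref{prop:decBB}. The two points that require care are verifying the retraction identities $\pi_B\circ\iota_B=\id$ and $\pi_{\undB}\circ\iota_{\undB}=\id$, so that the summands come out as $B\{-1\}$ and $\undB\{1\}$ with exactly these grading shifts rather than their shifted duals, and keeping the grading shifts straight while transporting the decomposition along $\undR$ in Step~2, which is the step where a sign or degree slip is easiest to make.
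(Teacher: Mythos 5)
Your proof is correct and follows essentially the same route as the paper's: the paper likewise reads off $B\otimes B\cong B\{-1\}\oplus\undB\{1\}$ by factoring each idempotent of Proposition~\ref{prop:decBB} through its image and checking the retraction (using exactly the two compositions you describe, with the dot \eqref{eq:enddot}/split \eqref{eq:splitB} for the first summand and merge \eqref{eq:mergeB}/co-merge for the second), and then obtains \eqref{eq_dirs2} and the $\undB\otimes\undB$ case by tensoring with $\undR$ on the left and right. Your appeal to idempotent completeness is harmless but redundant, since the explicit section/retraction pairs already exhibit the summands directly.
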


\begin{proof}
The first idempotent on the right hand side of (\ref{eq:idBBdec}) is a composition of degree $0$ maps 
\begin{equation*}
B\otimes B \xra{\xy (0,0)*{
\tikzdiagc[scale=.4]{
\draw[ultra thick,blue] (0,1)-- (0, -1); 
\draw[ultra thick,blue] (1,-1) -- (1,0)node[pos=1, tikzdot]{};
\draw[thick, dotted] (-.5,1) to (1.5,1);\draw[thick, dotted] (-.5,-1) to (1.5,-1);
}} \endxy} B\{-1\}\xra{
\xy (0,0)*{
\tikzdiagc[scale=.4]{
\draw[ultra thick,blue] (0,0)-- (0, -1); \draw[ultra thick,blue] (-1,1) -- (0,0); \draw[ultra thick,blue] (1,1) -- (0,0);
\draw[thick, dotted] (-1.25,1) to (1.5,1);\draw[thick, dotted] (-1.25,-1) to (1.5,-1);
}} \endxy
} B\otimes B.
\end{equation*}

Composing in the opposite direction gives the identity map of $B\{-1\}$, 
so that this idempotent is a projection onto a copy of $B\{-1\}$. Grading shift is present due to the degree of (\ref{eq:enddot}) being one. 

Likewise, the second idempotent is a composition 
\begin{equation*}
B\otimes B \xra{
  \xy (0,0)*{
\tikzdiagc[scale=.4]{
\draw[very thick,Orange,densely dashed] (0,0) to[out=20,in=-100]  (1,1);  
\draw[ultra thick,blue] (0,0)-- (0,1); \draw[ultra thick,blue] (-1,-1) -- (0,0); \draw[ultra thick,blue] (1,-1) -- (0,0);
\draw[thick, dotted] (-1.25,1) to (1.25,1);\draw[thick, dotted] (-1.25,-1) to (1.25,-1);
}} \endxy} \undB\{1\}\xra{
- \ 
\xy (0,0)*{
\tikzdiagc[scale=.4]{
\draw[very thick,Orange,densely dashed] (1,-1) to (1,.25);  
\draw[ultra thick,blue] (0,-1)-- (0,1);
\draw[ultra thick,blue] (1,.25) -- (1,1)node[pos=0, tikzdot]{};
\draw[thick, dotted] (-.75,1) to (1.5,1);\draw[thick, dotted] (-.75,-1) to (1.5,-1);
}} \endxy
} B\otimes B,
\end{equation*}
with the composition in the opposite direction equal $\id_{\undB\{1\}}$. Thus, it's a projection onto a graded bimodule isomorphic to $\undB\{1\}$. 
We obtain  a direct sum decomposition $B\otimes B\cong \undB\{-1\}\oplus B\{1\}$  in (\ref{eq_dirs1}) 
Tensoring with $\undR$ on the right and on the left 
gives the remaining direct sum decompositions.
\end{proof}

\begin{rem}
The identity in~\eqref{eq:idBBdec} can be expressed in equivalent ways, which result in different presentations of the maps realising the isomorphisms in~\eqref{eq_dirs1} and~\eqref{eq_dirs2}. 
For example, it equals its reflection around a vertical axis:
\begin{equation}\label{prop:decBB-alternative}
\xy (0,0)*{
\tikzdiagc[scale=.45]{
\draw[ultra thick,blue] (-.5, -1.5) -- (-.5,1);  
\draw[ultra thick,blue] (.75,-1.5) -- (.75,1);
\draw[thick, dotted] (-1.25,1) to (1.5,1);\draw[thick, dotted] (-1.25,-1.5) to (1.5,-1.5);
}} \endxy
=
\xy (0,0)*{
\tikzdiagc[scale=.45,xscale=-1]{
\draw[ultra thick,blue] (0,0)-- (0, -1.5); \draw[ultra thick,blue] (-1,1) -- (0,0); \draw[ultra thick,blue] (1,1) -- (0,0);
\draw[ultra thick,blue] (1,-1.5) -- (1,-.5)node[pos=1, tikzdot]{};
\draw[thick, dotted] (-1.25,1) to (1.5,1);\draw[thick, dotted] (-1.25,-1.5) to (1.5,-1.5);
}} \endxy
\ - \ 
\xy (0,0)*{
\tikzdiagc[scale=.45,xscale=-1]{
\draw[very thick,Orange,densely dashed] (0,0) to[out=30,in=-90]  (1, .85);  
\draw[ultra thick,blue] (0,0)-- (0,1.5); \draw[ultra thick,blue] (-1,-1) -- (0,0); \draw[ultra thick,blue] (1,-1) -- (0,0);
\draw[ultra thick,blue] (1,1.5) -- (1,.85)node[pos=1, tikzdot]{};
\draw[thick, dotted] (-1.25,1.5) to (1.75,1.5);\draw[thick, dotted] (-1.25,-1) to (1.75,-1);
}} \endxy
\end{equation}

To see that this equation holds, we need the two relations below. 
\begin{gather*}
\xy (0,1)*{
\tikzdiagc[scale=.45,xscale=1]{
\draw[very thick,Orange,densely dashed] (0,0) to[out=30,in=-90]  (1, .85);  
\draw[ultra thick,blue] (0,0)-- (0,1.5); \draw[ultra thick,blue] (-1,-1) -- (0,0); \draw[ultra thick,blue] (1,-1) -- (0,0);
\draw[ultra thick,blue] (1,1.5) -- (1,.85)node[pos=1, tikzdot]{};
\draw[thick, dotted] (-1.25,1.5) to (1.75,1.5);\draw[thick, dotted] (-1.25,-1) to (1.75,-1);
}} \endxy
\ + \ 
\xy (0,0)*{
\tikzdiagc[scale=.45,xscale=-1]{
\draw[very thick,Orange,densely dashed] (0,0) to[out=30,in=-90]  (1, .85);  
\draw[ultra thick,blue] (0,0)-- (0,1.5); \draw[ultra thick,blue] (-1,-1) -- (0,0); \draw[ultra thick,blue] (1,-1) -- (0,0);
\draw[ultra thick,blue] (1,1.5) -- (1,.85)node[pos=1, tikzdot]{};
\draw[thick, dotted] (-1.25,1.5) to (1.75,1.5);\draw[thick, dotted] (-1.25,-1) to (1.75,-1);
}} \endxy
\ =\ 
\xy (0,0)*{
\tikzdiagc[scale=.45]{
\draw[very thick,Orange,densely dashed] (.125,-.125) to (.125,.625);  
\draw[ultra thick,blue] (-.75,-1) to[out=80,in=180] (.125,-.125) to[out=0,in=100] (1,-1);
\draw[ultra thick,blue] (-.75,1.5) to[out=-80,in=180] (.125,.625) to[out=0,in=-100] (1,1.5);
\draw[thick, dotted] (-1.25,1.5) to (1.75,1.5);\draw[thick, dotted] (-1.25,-1) to (1.75,-1);
}} \endxy
\\[1ex]
\xy (0,1)*{
\tikzdiagc[scale=.45]{
\draw[ultra thick,blue] (-.5, -1.5) -- (-.5,-.25)node[pos=1, tikzdot]{};  
\draw[ultra thick,blue] (.75,-1.5) -- (.75,1);
\draw[thick, dotted] (-1.25,1) to (1.5,1);\draw[thick, dotted] (-1.25,-1.5) to (1.5,-1.5);
}} \endxy
=
\xy (0,0)*{
\tikzdiagc[scale=.45,xscale=-1]{
\draw[ultra thick,blue] (-.5, -1.5) -- (-.5,-.25)node[pos=1, tikzdot]{};  
\draw[ultra thick,blue] (.75,-1.5) -- (.75,1);
\draw[thick, dotted] (-1.25,1) to (1.5,1);\draw[thick, dotted] (-1.25,-1.5) to (1.5,-1.5);
}} \endxy
\ - \ 
\xy (0,0)*{
\tikzdiagc[scale=.45]{
\draw[very thick,Orange,densely dashed] (.125,0) to (.125,.85);  
\draw[ultra thick,blue] (-.75,-1) to[out=80,in=180] (.125,0) to[out=0,in=100] (1,-1);
\draw[ultra thick,blue] (.125,1.5) -- (.125,.85)node[pos=1, tikzdot]{};
\draw[thick, dotted] (-1.25,1.5) to (1.75,1.5);\draw[thick, dotted] (-1.25,-1) to (1.75,-1);
}} \endxy
\end{gather*}
The first one can be proved by direct computation, and the second is obtained from~\eqref{eq:idBBdec} by postcomposing all terms with the map $m$ at the appropriate place.
 Combining these two relations with \eqref{eq:idBBdec} gives~\eqref{prop:decBB-alternative}.
 \end{rem}

The direct sum decompositions in Corollary~\ref{cor_dir_sum} are not canonical. Specializing to $B\otimes B$, there is a canonical 
short exact sequence below  (up to a choice of signs for the maps) 
with the inclusion given by map (\ref{eq:splitB}) 
\begin{equation*}
  0 \lra B\{-1\} \xra{
\xy (0,0)*{
\tikzdiagc[scale=.4]{
\draw[ultra thick,blue] (0,0)-- (0, -1); \draw[ultra thick,blue] (-1,1) -- (0,0); \draw[ultra thick,blue] (1,1) -- (0,0);
\draw[thick, dotted] (-1.25,1) to (1.25,1);\draw[thick, dotted] (-1.25,-1) to (1.25,-1);
}} \endxy
  }
  B\otimes B
  \xra{
\xy (0,0)*{
  \tikzdiagc[scale=.4]{
\draw[very thick,Orange,densely dashed] (0,0) to[out=20,in=-100]  (1,1);  
\draw[ultra thick,blue] (0,0)-- (0,1); \draw[ultra thick,blue] (-1,-1) -- (0,0); \draw[ultra thick,blue] (1,-1) -- (0,0);
\draw[thick, dotted] (-1.25,1) to (1.25,1);\draw[thick, dotted] (-1.25,-1) to (1.25,-1);    
}} \endxy
  } \undB\{1\}  \lra 0 .
\end{equation*}

This sequence splits, but a splitting is  non-unique, due to the  existence of a non-trivial degree $2$ bimodule map $\undB\lra B$, see below on the left. Via  adjointness, it comes from a degree two homomorphism $R\lra B\otimes B$, shown below on the right 

\begin{equation*}
-\ 
\xy (0,0)*{
\tikzdiagc[scale=.6]{
\draw[very thick, densely dashed,Orange] (-.5,-.25) to[out=-90,in=180] (0,-.75) to[out=0,in=-90] (.5,-.25);
\draw[very thick, densely dashed,Orange] (1,.25) to[out=90,in=180] (1.5,.75) to[out=0,in=90] (2,-1.25) ; 
\draw[ultra thick,blue] (-.5,-.25) -- (-.5, 1)node[pos=0, tikzdot]{};
\draw[ultra thick,blue] ( .5,-.25) -- (.5,-.24)node[pos=0, tikzdot]{};
\draw[ultra thick,blue] ( .5,-.25) to[out=90,in=180] (1,.25) to[out=0,in=90] (1.5,-.25) to (1.5,-1.25);
\draw[blue,fill=blue] (1,.25) circle (2.5pt);
\draw[thick, dotted] (-1,1) to (2.5,1);
\draw[thick, dotted] (-1,-1.25) to (2.5,-1.25);
}} \endxy
\mspace{70mu}
\xy (0,0)*{
\tikzdiagc[scale=.6]{
\draw[very thick, densely dashed,Orange] (-.5,0) to[out=-90,in=180] (0,-.75) to[out=0,in=-90] (.5,0);
\draw[ultra thick,blue] (-.5,0) -- (-.5, 1)node[pos=0, tikzdot]{};
\draw[ultra thick,blue] ( .5,0) -- ( .5, 1)node[pos=0, tikzdot]{};
\draw[thick, dotted] (-1,1) to (1,1);
\draw[thick, dotted] (-1,-1.25) to (1,-1.25);
}} \endxy
\end{equation*}
(the minus sign is added to match our definition of the corresponding adjointness morphism). 
A particular direct sum decomposition of $B\otimes B$ is given by the following maps, as in the proof of Corollary~\ref{cor_dir_sum}.  
\begin{equation*}
\xy (0,0)*{
\tikzdiagc[scale=.6]{
  \node at (-5.5,0) {\small \text{$B\{-1\}$}};
\node at ( 0,0) {\small \text{$B\otimes B$}};
\node at ( 5.35,0) {\small \text{$\undB\{1\}$}};
\draw[->] (-4.5,.25) to[out=20,in=160] (-1,.25);
\draw[<-] (-4.5,-.25) to[out=-20,in=-160] (-1,-.25);
\draw[<-] ( 4.5,.25) to[out=160,in=20] ( 1,.25);
\draw[->] ( 4.5,-.25) to[out=-160,in=-20] ( 1,-.25);
\begin{scope}[shift={(-2.8,1.7)},scale=.75]
\draw[ultra thick,blue] (0,0)-- (0, -1); \draw[ultra thick,blue] (-1,1) -- (0,0); \draw[ultra thick,blue] (1,1) -- (0,0);
\draw[thick, dotted] (-1.25,1) to (1.25,1);\draw[thick, dotted] (-1.25,-1) to (1.25,-1);
\end{scope}
\begin{scope}[shift={(-3.2,-1.7)},scale=.75]
\draw[ultra thick,blue] (0,1)-- (0, -1); 
\draw[ultra thick,blue] (1,-1) -- (1,0)node[pos=1, tikzdot]{};
\draw[thick, dotted] (-.5,1) to (1.5,1);\draw[thick, dotted] (-.5,-1) to (1.5,-1);
\end{scope}
\begin{scope}[shift={(2.7,1.7)},scale=.75]
\draw[very thick,Orange,densely dashed] (0,0) to[out=20,in=-100]  (1,1);  
\draw[ultra thick,blue] (0,0)-- (0,1); \draw[ultra thick,blue] (-1,-1) -- (0,0); \draw[ultra thick,blue] (1,-1) -- (0,0);
\draw[thick, dotted] (-1.25,1) to (1.25,1);\draw[thick, dotted] (-1.25,-1) to (1.25,-1);
\end{scope}
\begin{scope}[shift={(2.6,-1.7)},scale=.75]
\node at (-1.2,0) {\text{$-$}};
\draw[very thick,Orange,densely dashed] (1,-1) to (1,.25);  
\draw[ultra thick,blue] (0,-1)-- (0,1);
\draw[ultra thick,blue] (1,.25) -- (1,1)node[pos=0, tikzdot]{};
\draw[thick, dotted] (-.5,1) to (1.5,1);\draw[thick, dotted] (-.5,-1) to (1.5,-1);
\end{scope}
}} \endxy  
\end{equation*}

%
%

\section{Oriented calculus for products of 
generating bimodules}

Our diagrammatics so far explicitly includes bimodules $B$ (blue lines) and $\undR$  (dashed orange lines). Bimodule $\undB$ and the maps that go through it appear implicitly through a combination of diagrammatics for $B$  and for $\undR$. It's natural to extend this diagrammatics, by  depicting the identity map of $B$, respectively $\undB$, by a vertical blue line oriented up, respectively down, see below. Then the biadjointness maps  (\ref{eq_biadjoint_pics}) can be compactly depicted by oriented cups and caps, with the usual isotopy relations on these cup and caps.

\begin{gather*}
\xy (0,0)*{
\tikzdiagc[scale=.45]{
\draw[ultra thick,blue,-to] (0,-1.5) to (0,.1); \draw[ultra thick,blue] (0,0) to (0,1.5); 
\draw[thick, dotted] (-1,1.5) to (1,1.5);\draw[thick, dotted] (-1,-1.5) to (1,-1.5);
}} \endxy  
=
\xy (0,-1)*{
\tikzdiagc[scale=.45]{
\fill[blue!10] (.5,-1.5) to (-.5,-1.5) to (-.5,1.5) to (.5,1.5)--cycle;
\draw[thick,-to] (-.5,-1.5) to (-.5,.1); \draw[thick] (-.5,0) to (-.5,1.5); 
\draw[thick] (.5,-1.5) to (.5,-.1); \draw[thick,to-] (.5,-.15) to (.5,1.5); 
\draw[thick, dotted] (-1,1.5) to (1,1.5);\draw[thick, dotted] (-1,-1.5) to (1,-1.5);
}} \endxy
\mspace{40mu}
\xy (0,-1)*{
\tikzdiagc[scale=.45]{
\draw[ultra thick,blue] (0,-1.5) to (0,0); \draw[ultra thick,blue,to-] (0,-.15) to (0,1.5); 
\draw[thick, dotted] (-1,1.5) to (1,1.5);\draw[thick, dotted] (-1,-1.5) to (1,-1.5);
}} \endxy  
=
\xy (0,-1)*{
\tikzdiagc[scale=.45]{
\fill[blue!10] (.5,-1.5) to (-.5,-1.5) to (-.5,1.5) to (.5,1.5)--cycle;
\draw[thick,-to] (-.5,-1.5) to (-.5,.1); \draw[thick] (-.5,0) to (-.5,1.5); 
\draw[thick,Orange,densely dashed] (0,-1.5)-- (0, 1.5);  
\draw[thick] (.5,-1.5) to (.5,-.1); \draw[thick,to-] (.5,-.15) to (.5,1.5); 
\draw[thick, dotted] (-1,1.5) to (1,1.5);\draw[thick, dotted] (-1,-1.5) to (1,-1.5);
}} \endxy
\\[1ex] 
\xy (0,-2.4)*{
\tikzdiagc[scale=.45,xscale=-1,yscale=-1]{
\draw[ultra thick,blue] (-1,1) to[out=-75,in=180] (0,0) to[out=0,in=-105] (1,1);
\draw[ultra thick,blue,to-] (-.15,0) -- (-.1,0);  
\draw[thick, dotted] (-1.25,1) to (1.25,1);\draw[thick, dotted] (-1.25,-1) to (1.25,-1);
\node at (1,1.5) {\small \text{$B$}};
\node at (-1,1.55) {\small \text{$\undB$}};
}} \endxy
\mspace{40mu}
\xy (0,-3.4)*{
\tikzdiagc[scale=.45,xscale=1,yscale=-1]{
\draw[ultra thick,blue] (-1,1) to[out=-75,in=180] (0,0) to[out=0,in=-105] (1,1);
\draw[ultra thick,blue,to-] (-.15,0) -- (-.1,0);  
\draw[thick, dotted] (-1.25,1) to (1.25,1);\draw[thick, dotted] (-1.25,-1) to (1.25,-1);
\node at (1,1.5) {\small \text{$B$}};
\node at (-1,1.55) {\small \text{$\undB$}};
}} \endxy
\mspace{40mu}
\xy (0,1.4)*{
\tikzdiagc[scale=.45,xscale=1,yscale=1]{
\draw[ultra thick,blue] (-1,1) to[out=-75,in=180] (0,0) to[out=0,in=-105] (1,1);
\draw[ultra thick,blue,to-] (-.15,0) -- (-.1,0);  
\draw[thick, dotted] (-1.25,1) to (1.25,1);\draw[thick, dotted] (-1.25,-1) to (1.25,-1);
\node at (1,1.45) {\small \text{$\undB$}};
\node at (-1,1.5) {\small \text{$B$}};
}} \endxy
\mspace{40mu}
\xy (0,1.4)*{
\tikzdiagc[scale=.45,xscale=-1,yscale=1]{
\draw[ultra thick,blue] (-1,1) to[out=-75,in=180] (0,0) to[out=0,in=-105] (1,1);
\draw[ultra thick,blue,to-] (-.15,0) -- (-.1,0);  
\draw[thick, dotted] (-1.25,1) to (1.25,1);\draw[thick, dotted] (-1.25,-1) to (1.25,-1);
\node at (1,1.45) {\small \text{$\undB$}};
\node at (-1,1.5) {\small \text{$B$}};
}} \endxy
\end{gather*}

A closed circle, either clockwise or counterclockwise oriented, evaluates to $0$, see below. 

\begin{equation*}
\xy (0,0)*{
\tikzdiagc[scale=.6,xscale=1,yscale=1]{
\draw[ultra thick, blue] (0,0) circle (18pt); 
\draw[ultra thick,blue,to-] (-.63,-.1) -- (-.63,0);  
\draw[thick, dotted] (-1.25,1) to (1.25,1);\draw[thick, dotted] (-1.25,-1) to (1.25,-1);
}} \endxy
=
0
=
\xy (0,0)*{
\tikzdiagc[scale=.6,xscale=1,yscale=1]{
\draw[ultra thick, blue] (0,0) circle (18pt); 
\draw[ultra thick,blue,to-] (-.63,.2) -- (-.63,.1);  
\draw[thick, dotted] (-1.25,1) to (1.25,1);\draw[thick, dotted] (-1.25,-1) to (1.25,-1);
}} \endxy
\end{equation*}

There are additional generating maps and isotopy relations on them (some relations are depicted below, together with the maps).

\begin{equation}\label{fig_five_diag}
\begin{split}
\xy (0,1)*{
\tikzdiagc[scale=.6]{
  \draw[ultra thick,blue] (0,-.4) -- (0, .5)node[pos=1, tikzdot]{};
  \draw[ultra thick,blue,-to] (0,-1) to (0,-.1); 
\draw[thick, dotted] (-1,1) to (1,1);\draw[thick, dotted] (-1,-1) to (1,-1);
}} \endxy  
=
\xy (0,0)*{
\tikzdiagc[scale=.6,xscale=1,yscale=1]{
\fill[blue!10]  (-.6,-1) to[out=90,in=180] (0,.5) to[out=0,in=90] (.6,-1)--cycle;
\draw[thick] (-.6,-1) to[out=90,in=180] (0,.5) to[out=0,in=90] (.6,-1);
\draw[thick,-to] (-.585,-.3) -- (-.580,-.2);  
\draw[thick, dotted] (-1.25,1) to (1.25,1);\draw[thick, dotted] (-1.25,-1) to (1.25,-1);
}} \endxy
\mspace{65mu}
\xy (0,0)*{
\tikzdiagc[scale=.6,yscale=-1]{
  \draw[ultra thick,blue] (0,-.4) -- (0, .5)node[pos=1, tikzdot]{};
  \draw[ultra thick,blue,-to] (0,-1) to (0,-.1); 
\draw[thick, dotted] (-1,1) to (1,1);\draw[thick, dotted] (-1,-1) to (1,-1);
}} \endxy  
=
\xy (0,0)*{
\tikzdiagc[scale=.6,xscale=1,yscale=-1]{
\fill[blue!10]  (-.6,-1) to[out=90,in=180] (0,.125) to[out=0,in=90] (.6,-1)--cycle;
\draw[thick,Orange,densely dashed] (0,-1) to (0,.125); 
\draw[thick] (-.6,-1) to[out=90,in=180] (0,.125) to[out=0,in=90] (.6,-1);
\draw[thick,to-] (-.551,-.5) -- (-.54,-.4);  
\draw[fill=black] (0,.125) circle (1.5pt);
\draw[thick, dotted] (-1.25,1) to (1.25,1);\draw[thick, dotted] (-1.25,-1) to (1.25,-1);
}} \endxy
=
\xy (0,0)*{
\tikzdiagc[scale=.6,yscale=-1]{
  \draw[ultra thick,blue] (0,-.35) to[out=90,in=180] (.5,.5) to[out=0,in=90] (1,-.35);
\draw[ultra thick, blue] (1,-.35)--(1,-.34)node[pos=1, tikzdot]{};
  \draw[ultra thick,blue,-to] (0,-1) to (0,-.25); 
\draw[thick, dotted] (-.5,1) to (1.5,1);\draw[thick, dotted] (-.5,-1) to (1.5,-1);
}} \endxy  
\mspace{61mu}
\\[1ex]
\xy (0,1)*{
\tikzdiagc[scale=.6]{
\draw[thick,Orange,densely dashed] (0,0) to (0,-1); 
  \draw[ultra thick,blue] (0,.6) -- (0,1);
  \draw[ultra thick,blue,-to] (0,0) -- (0,.65)node[pos=0, tikzdot]{}; 
\draw[thick, dotted] (-1,1) to (1,1);\draw[thick, dotted] (-1,-1) to (1,-1);
}} \endxy  
=
\xy (0,0)*{
\tikzdiagc[scale=.6,xscale=1,yscale=-1]{
\fill[blue!10]  (-.6,-1) to[out=90,in=180] (0,.125) to[out=0,in=90] (.6,-1)--cycle;
\draw[thick,Orange,densely dashed] (0,1) to (0,.125); 
\draw[thick] (-.6,-1) to[out=90,in=180] (0,.125) to[out=0,in=90] (.6,-1);
\draw[thick,to-] (-.551,-.5) -- (-.54,-.4);  
\draw[fill=black] (0,.125) circle (1.5pt);
\draw[thick, dotted] (-1.25,1) to (1.25,1);\draw[thick, dotted] (-1.25,-1) to (1.25,-1);
}} \endxy
\mspace{55mu}
\xy (0,0)*{
\tikzdiagc[scale=.6,yscale=-1]{
\draw[thick,Orange,densely dashed] (0,0) to (0,-1); 
  \draw[ultra thick,blue] (0,.6) -- (0,1);
  \draw[ultra thick,blue,-to] (0,0) -- (0,.65)node[pos=0, tikzdot]{}; 
\draw[thick, dotted] (-1,1) to (1,1);\draw[thick, dotted] (-1,-1) to (1,-1);
}} \endxy  
=
\xy (0,0)*{
\tikzdiagc[scale=.6,xscale=1,yscale=1]{
\fill[blue!10]  (-.6,-1) to[out=90,in=180] (0,.125) to[out=0,in=90] (.6,-1)--cycle;
\draw[thick,Orange,densely dashed] (0,-1) to[out=90,in=-90] (.35,0) to[out=90,in=-90] (0,1); 
\draw[thick] (-.6,-1) to[out=90,in=180] (0,.125) to[out=0,in=90] (.6,-1);
\draw[thick,-to] (-.551,-.5) -- (-.54,-.4);  
\draw[thick, dotted] (-1.25,1) to (1.25,1);\draw[thick, dotted] (-1.25,-1) to (1.25,-1);
}} \endxy
\mspace{55mu}
\xy (0,0)*{
\tikzdiagc[scale=.6]{
\draw[thick,Orange,densely dashed] (0,.1) to[out=-90,in=180] (.375,-.5) to[out=0,in=-90] (1,1); 
  \draw[ultra thick,blue] (0,.6) -- (0,1);
  \draw[ultra thick,blue,-to] (0,0) -- (0,.65)node[pos=0, tikzdot]{}; 
\draw[thick, dotted] (-.5,1) to (1.5,1);\draw[thick, dotted] (-.5,-1) to (1.5,-1);
}} \endxy  
=
\xy (0,0)*{
\tikzdiagc[scale=.6]{
\draw[thick,Orange,densely dashed] (.94,.05) to[out=85,in=-90] (1,1);   
  \draw[ultra thick,blue] (0,.6) -- (0,1);
  \draw[ultra thick,blue,-to] (0,0) -- (0,.65);
\draw[ultra thick,blue] (0,0) to[out=-90,in=180] (.45,-.5) to[out=0,in=-95] (.92,0); 
\draw[ultra thick,blue] (.92,-.05)--(.92,0)node[pos=1, tikzdot]{}; 
\draw[thick, dotted] (-.5,1) to (1.5,1);\draw[thick, dotted] (-.5,-1) to (1.5,-1);
}} \endxy  
\end{split}
\end{equation}

Isomorphisms (\ref{eq_blue_red}) can be depicted by a trivalent vertex where a dashed orange line enters the point of orientation reversal of a blue line, see below, together with the corresponding relations. There are 8 such trivalent vertices, with some relations on them also shown below and other relations are obtained by suitable symmetries (horizontal and vertical reflection and orientation reversal). 

\begin{equation*}
\xy (0,0)*{
\tikzdiagc[scale=.6]{
\draw[thick,Orange,densely dashed] (.1,0) to[out=0,in=-90] (.5,1); 
\draw[ultra thick,blue] (0,.6) -- (0,1);
\draw[ultra thick,blue,-to] (0,0) -- (0,.65); 
\draw[blue,fill=blue] (0,0) circle (2.5pt);
\draw[ultra thick,blue] (0,-.6) -- (0,-1);
\draw[ultra thick,blue,-to] (0,0) -- (0,-.65); 
\draw[thick, dotted] (-.5,1) to (.75,1);\draw[thick, dotted] (-.5,-1) to (.75,-1);
\node at (-1.5,-1) {\small \text{$\undB$}};\node at (-1.5, 1) {\small \text{$B\otimes\undR$}};
\draw[->] (-1.5,-.55) to (-1.5,.55);\node at (-1.8,0) {\tiny \text{$\cong$}};
}} \endxy  
=
\xy (0,0)*{
\tikzdiagc[scale=.6]{
\fill[blue!10] (.5,-1) to (-.5,-1) to (-.5,1) to (.5,1)--cycle;
\draw[thick,Orange,densely dashed] (0,-1) to[out=90,in=180] (.5,0) to[out=0,in=-90] (1,1); 
\draw[thick,-to] (-.5,-1) to (-.5,.1); \draw[thick] (-.5,0) to (-.5,1); 
\draw[thick] (.5,-1) to (.5,-.5); \draw[thick,to-] (.5,-.55) to (.5,1); 
\draw[thick, dotted] (-1,1) to (1,1);\draw[thick, dotted] (-1,-1) to (1,-1);
}} \endxy
\mspace{45mu}
\xy (0,0)*{
\tikzdiagc[scale=.6,yscale=-1,xscale=1]{
\draw[thick,Orange,densely dashed] (.1,0) to[out=180,in=-90] (-.5,1); 
\draw[ultra thick,blue] (0,.6) -- (0,1);
\draw[ultra thick,blue,-to] (0,0) -- (0,.65); 
\draw[blue,fill=blue] (0,0) circle (2.5pt);
\draw[ultra thick,blue] (0,-.6) -- (0,-1);
\draw[ultra thick,blue,-to] (0,0) -- (0,-.65); 
\draw[thick, dotted] (-.75,1) to (.5,1);\draw[thick, dotted] (-.75,-1) to (.5,-1);
\node at (-1.75,-1) {\small \text{$B$}};\node at (-1.75, 1) {\small \text{$\undR\otimes\undB$}};
\draw[<-] (-1.75,-.55) to (-1.75,.55);\node at (-2.05,0) {\tiny \text{$\cong$}};
}} \endxy
=
\xy (0,0)*{
\tikzdiagc[scale=.6]{
\fill[blue!10] (.5,-1) to (-.5,-1) to (-.5,1) to (.5,1)--cycle;
\draw[thick,Orange,densely dashed] (-1,-1) to[out=90,in=180] (-.5,0) to[out=0,in=90] (0,-1); 
\draw[thick,-to] (-.5,-1) to (-.5,.5); \draw[thick] (-.5,.4) to (-.5,1); 
\draw[thick] (.5,-1) to (.5,0); \draw[thick,to-] (.5,-.05) to (.5,1); 
\draw[thick, dotted] (-1,1) to (1,1);
\draw[thick, dotted] (-1,-1) to (1,-1);
}} \endxy
\mspace{45mu}
\xy (0,0)*{
\tikzdiagc[scale=.6]{
\draw[thick,Orange,densely dashed] (0,-.5) to[out=0,in=-90] (.5,0) to[out=90,in=0] (0,.5); 
\draw[ultra thick,blue] (0,-1.25) -- (0,1.25);
\draw[blue,fill=blue] (0,-.5) circle (2.5pt);
\draw[blue,fill=blue] (0,.5) circle (2.5pt);
\draw[ultra thick,blue,-to] (0,.99) -- (0,1);
\draw[ultra thick,blue,-to] (0,.14) -- (0,-.15);
\draw[ultra thick,blue,-to] (0,-.99) -- (0,-.8);
\draw[thick, dotted] (-.5,1.25) to (.75,1.25);\draw[thick, dotted] (-.5,-1.25) to (.75,-1.25);
}} \endxy
=
\xy (0,0)*{
\tikzdiagc[scale=.6]{
\draw[ultra thick,blue] (0,-1.25) -- (0,1.25);
\draw[ultra thick,blue,-to] (0,.14) -- (0,.15);
\draw[thick, dotted] (-.5,1.25) to (.75,1.25);\draw[thick, dotted] (-.5,-1.25) to (.75,-1.25);
\node at (1.5,-1.15) {\small \text{$B$}};\node at (1.5, 1.15) {\small \text{$B$}};
\draw[->] (1.5,-.75) to (1.5,.75);\node at (1.875,0) {\small \text{$\id$}};
}} \endxy
\end{equation*}

\begin{equation*}
\xy (0,0)*{
\tikzdiagc[scale=.5]{
\draw[thick,Orange,densely dashed] (0,-.285) to[out=135,in=-90] (-.5,.25) to[out=90,in=-90] (.5,1.25); 
\draw[ultra thick,blue] (0,-1.25) -- (0,1.25);
\draw[blue,fill=blue] (0,-.25) circle (2.5pt);
\draw[ultra thick,blue,-to] (0,.2) -- (0,.15);
\draw[ultra thick,blue,-to] (0,-.79) -- (0,-.6);
\draw[thick, dotted] (-.5,1.25) to (.75,1.25);\draw[thick, dotted] (-.5,-1.25) to (.75,-1.25);
}} \endxy
=
\xy (0,0)*{
\tikzdiagc[scale=.5]{
\draw[thick,Orange,densely dashed] (0,-.25) to[out=20,in=-90] (.5,1.25); 
\draw[ultra thick,blue] (0,-1.25) -- (0,1.25);
\draw[blue,fill=blue] (0,-.25) circle (2.5pt);
\draw[ultra thick,blue,-to] (0,.2) -- (0,.15);
\draw[ultra thick,blue,-to] (0,-.79) -- (0,-.6);
\draw[thick, dotted] (-.5,1.25) to (.75,1.25);\draw[thick, dotted] (-.5,-1.25) to (.75,-1.25);
}} \endxy
\mspace{80mu}
\xy (0,0)*{
\tikzdiagc[scale=.6]{
\draw[thick,Orange,densely dashed] (-.44,.4) to[out=225,in=100] (0,-1);   
\draw[ultra thick,blue] (-.5,1) to[out=-90,in=180] (0,0) to[out=0,in=-95] (.5,1); 
\draw[blue,fill=blue] (-.44,.4) circle (2.5pt);
\draw[ultra thick,blue,-to] (.46,.6) -- (.47,.7);
\draw[ultra thick,blue,-to] (-.49,.75) -- (-.50,.85);
\draw[thick, dotted] (-1,1) to (1,1);\draw[thick, dotted] (-1,-1) to (1,-1);
}} \endxy  
=
\xy (0,0)*{
\tikzdiagc[scale=.6,xscale=-1]{
\draw[thick,Orange,densely dashed] (-.44,.4) to[out=225,in=100] (0,-1);   
\draw[ultra thick,blue] (-.5,1) to[out=-90,in=180] (0,0) to[out=0,in=-95] (.5,1); 
\draw[blue,fill=blue] (-.44,.4) circle (2.5pt);
\draw[ultra thick,blue,-to] (.46,.6) -- (.47,.7);
\draw[ultra thick,blue,-to] (-.49,.75) -- (-.50,.85);
\draw[thick, dotted] (-1,1) to (1,1);\draw[thick, dotted] (-1,-1) to (1,-1);
}} \endxy 
\end{equation*}

Blue lines in the top row of (\ref{eq_convert}) now acquire upward orientation, see below. The rightmost diagram is an exception; dashed orange line is hidden at the cost of orienting the left bottom leg down. 
\begin{equation}\label{eq_three_small}
\xy (0,1)*{
\tikzdiagc[scale=.45]{
\draw[ultra thick,blue] (-1,-1)-- (1, 1); 
\draw[ultra thick,blue,-to] (.4,.4) -- (.5,.5);
\draw[very thick,Orange,densely dashed] (1,-1)-- (-1, 1);  
\draw[thick, dotted] (-1.25,1) to (1.25,1);\draw[thick, dotted] (-1.25,-1) to (1.25,-1);
}} \endxy
\mspace{60mu}
\xy (0,1)*{
\tikzdiagc[scale=.45]{
\draw[ultra thick,blue] (0,0)-- (0, -1); \draw[ultra thick,blue] (-1,1) -- (0,0); \draw[ultra thick,blue] (1,1) -- (0,0);
\draw[ultra thick,blue,-to] (0,-.4) -- (0,-.3);
\draw[ultra thick,blue,to-] (.7,.7) -- (.6,.6);
\draw[ultra thick,blue,to-] (-.7,.7) -- (-.6,.6);
\draw[thick, dotted] (-1.25,1) to (1.25,1);\draw[thick, dotted] (-1.25,-1) to (1.25,-1);
}} \endxy
\mspace{60mu}
\xy (0,1)*{
\tikzdiagc[scale=.45,xscale=-1]{
\draw[ultra thick,blue] (0,0)-- (0,1); \draw[ultra thick,blue] (-1,-1) -- (0,0); \draw[ultra thick,blue] (1,-1) -- (0,0);
\draw[ultra thick,blue,-to] (0,.6) -- (0,.7);
\draw[ultra thick,blue,-to] (-.4,-.4) -- (-.3,-.3);
\draw[ultra thick,blue,to-] (.7,-.7) -- (.6,-.6);
\draw[thick, dotted] (-1.25,1) to (1.25,1);\draw[thick, dotted] (-1.25,-1) to (1.25,-1);
}} \endxy
\end{equation}

 Composing trivalent vertices with cups and caps results in rotated trivalent vertices, see below (where top left diagram is the rightmost diagram in (\ref{eq_three_small}).

\begingroup\allowdisplaybreaks
\begin{gather*}
  \xy (0,1)*{
\tikzdiagc[scale=.54]{
\draw[ultra thick,blue] (0,0)-- (0,1.125);
\draw[ultra thick,blue] (-1,-1.125) -- (0,0); \draw[ultra thick,blue] (1,-1.125) -- (0,0);
\draw[ultra thick,blue,-to] (0,.54) -- (0,.66);
\draw[ultra thick,blue,to-] (-.63,-.7) -- (-.53,-.6);
\draw[ultra thick,blue,to-] (.35,-.4) -- (.45,-.5);
\draw[thick, dotted] (-1.25,1.125) to (1.25,1.125);\draw[thick, dotted] (-1.25,-1.125) to (1.25,-1.125);
}} \endxy
:=
\xy (0,0)*{
\tikzdiagc[scale=.54]{
\draw[ultra thick,blue] (0,0)-- (0, -1);
\draw[ultra thick,blue] (0,0) to[out=135,in=0] (-.75,.62) to[out=180,in=90] (-1.35,-1);
\draw[ultra thick,blue] (0,0) to[out=45,in=-90] (.25,1.25);
\draw[ultra thick,blue,-to] (0,-.4) -- (0,-.3);
\draw[ultra thick,blue,to-] (.275,1.0) -- (.275,.9);
\draw[ultra thick,blue,to-] (-.7,.66) -- (-.6,.6);
\draw[thick, dotted] (-1.75,1.25) to (.75,1.25);
\draw[thick, dotted] (-1.75,-1) to (.75,-1);
}} \endxy
=
\xy (0,0)*{
\tikzdiagc[scale=.54,xscale=-1,yscale=-1]{
\fill[blue!10] (-.425,-1.125) to[out=90,in=-90] (-1.125,1.125) to (1.125,1.125) to[out=-90,in=90] (.425,-1.125)--cycle;
\fill[white]  (-.5,1.125) to[out=-85,in=180] (0,.325) to[out=0,in=-95] (.5,1.125)--cycle;
\draw[thick,Orange,densely dashed] (0,.125) to[out=-60,in=-90] (.85,1.135); 
\draw[thick] (-.5,1.125) to[out=-85,in=180] (0,.325) to[out=0,in=-95] (.5,1.125);
\draw[thick,-to] (.43,.7) -- (.46,.8);  
\draw[thick] (-.425,-1.125) to[out=90,in=-90] (-1.125,1.125);
\draw[thick,-to] (-.77,0) -- (-.79,.05); 
\draw[thick] ( .425,-1.125) to[out=90,in=-90] (1.125,1.125);
\draw[thick,to-] (.7,-.15) -- (.82,.1); 
\draw[fill=black] (0,.325) circle (1.5pt);
\draw[thick, dotted] (-1.25,1.125) to (1.25,1.125);\draw[thick, dotted] (-1.25,-1.125) to (1.25,-1.125);
}} \endxy
\mspace{60mu}
\xy (0,0)*{
\tikzdiagc[scale=.54,yscale=-1]{
\draw[ultra thick,blue] (0,0)-- (0,1.125);
\draw[ultra thick,blue] (-1,-1.125) -- (0,0); \draw[ultra thick,blue] (1,-1.125) -- (0,0);
\draw[ultra thick,blue,-to] (0,.54) -- (0,.66);
\draw[ultra thick,blue,to-] (-.63,-.7) -- (-.53,-.6);
\draw[ultra thick,blue,to-] (.35,-.4) -- (.45,-.5);
\draw[thick, dotted] (-1.25,1.125) to (1.25,1.125);\draw[thick, dotted] (-1.25,-1.125) to (1.25,-1.125);
}} \endxy
:=
\xy (0,0)*{
\tikzdiagc[scale=.54,xscale=-1,yscale=-1]{
\draw[ultra thick,blue] (0,0)-- (0, -1);
\draw[ultra thick,blue] (0,0) to[out=135,in=0] (-.75,.62) to[out=180,in=90] (-1.35,-1);
\draw[ultra thick,blue] (0,0) to[out=45,in=-90] (.25,1.25);
\draw[ultra thick,blue,-to] (0,-.6) -- (0,-.7);
\draw[ultra thick,blue,to-] (.275,1.0) -- (.275,.9);
\draw[ultra thick,blue,-to] (-1.1,.48) -- (-1.0,.58);
\draw[thick, dotted] (-1.75,1.25) to (.75,1.25);
\draw[thick, dotted] (-1.75,-1) to (.75,-1);
}} \endxy
=
\xy (0,0)*{
\tikzdiagc[scale=.54,xscale=1,yscale=1]{
\fill[blue!10] (-.425,-1.125) to[out=90,in=-90] (-1.125,1.125) to (1.125,1.125) to[out=-90,in=90] (.425,-1.125)--cycle;
\fill[white]  (-.5,1.125) to[out=-85,in=180] (0,.325) to[out=0,in=-95] (.5,1.125)--cycle;
\draw[thick,Orange,densely dashed] (0,-1.125) to[out=90,in=-90] (.8,1.125); 
\draw[thick] (-.5,1.125) to[out=-85,in=180] (0,.325) to[out=0,in=-95] (.5,1.125);
\draw[thick,-to] (.43,.7) -- (.46,.8);  
\draw[thick] (-.425,-1.125) to[out=90,in=-90] (-1.125,1.125);
\draw[thick,-to] (-.77,0) -- (-.79,.05); 
\draw[thick] ( .425,-1.125) to[out=90,in=-90] (1.125,1.125);
\draw[thick,to-] (.7,-.15) -- (.82,.1); 
\draw[thick, dotted] (-1.25,1.125) to (1.25,1.125);\draw[thick, dotted] (-1.25,-1.125) to (1.25,-1.125);
}} \endxy
\\[1ex]
\xy (0,1)*{
\tikzdiagc[scale=.54]{
\draw[ultra thick,blue] (0,0)-- (0,1.125);
\draw[ultra thick,blue] (-1,-1.125) -- (0,0); \draw[ultra thick,blue] (1,-1.125) -- (0,0);
\draw[ultra thick,blue,to-] (0,.30) -- (0,.32);
\draw[ultra thick,blue,to-] (-.35,-.4) -- (-.45,-.5);
\draw[ultra thick,blue,to-] (.35,-.4) -- (.45,-.5);
\draw[thick, dotted] (-1.25,1.125) to (1.25,1.125);\draw[thick, dotted] (-1.25,-1.125) to (1.25,-1.125);
}} \endxy
:=
\xy (0,0)*{
\tikzdiagc[scale=.54,xscale=-1,yscale=-1]{
\fill[blue!10] (-.425,-1.125) to[out=90,in=-90] (-1.125,1.125) to (1.125,1.125) to[out=-90,in=90] (.425,-1.125)--cycle;
\fill[white]  (-.5,1.125) to[out=-85,in=180] (0,.325) to[out=0,in=-95] (.5,1.125)--cycle;
\draw[thick,Orange,densely dashed]  (0,-1.125) to (0,.125); 
\draw[thick] (-.5,1.125) to[out=-85,in=180] (0,.325) to[out=0,in=-95] (.5,1.125);
\draw[thick,-to] (.43,.7) -- (.46,.8);  
\draw[thick] (-.425,-1.125) to[out=90,in=-90] (-1.125,1.125);
\draw[thick,-to] (-.77,0) -- (-.79,.05); 
\draw[thick] ( .425,-1.125) to[out=90,in=-90] (1.125,1.125);
\draw[thick,to-] (.7,-.15) -- (.82,.1); 
\draw[fill=black] (0,.325) circle (1.5pt);
\draw[thick, dotted] (-1.25,1.125) to (1.25,1.125);\draw[thick, dotted] (-1.25,-1.125) to (1.25,-1.125);
}} \endxy
\mspace{60mu}
\xy (0,0)*{
\tikzdiagc[scale=.54,yscale=-1]{
\draw[ultra thick,blue] (0,0)-- (0,1.125);
\draw[ultra thick,blue] (-1,-1.125) -- (0,0); \draw[ultra thick,blue] (1,-1.125) -- (0,0);
\draw[ultra thick,blue,to-] (0,.30) -- (0,.32);
\draw[ultra thick,blue,to-] (-.35,-.4) -- (-.45,-.5);
\draw[ultra thick,blue,to-] (.35,-.4) -- (.45,-.5);
\draw[thick, dotted] (-1.25,1.125) to (1.25,1.125);\draw[thick, dotted] (-1.25,-1.125) to (1.25,-1.125);
}} \endxy
:=
\xy (0,0)*{
\tikzdiagc[scale=.54,xscale=1,yscale=-1]{
\draw[ultra thick,blue] (0,0)-- (0, -1);
\draw[ultra thick,blue] (0,0) to[out=135,in=0] (-.75,.62) to[out=180,in=90] (-1.35,-1);
\draw[ultra thick,blue] (0,0) to[out=45,in=-90] (.25,1.25);
\draw[ultra thick,blue,to-] (0,-.3) -- (0,-.4);
\draw[ultra thick,blue,-to] (.273,.7) -- (.273,.6);
\draw[ultra thick,blue,-to] (-1.1,.48) -- (-1.0,.58);
\draw[thick, dotted] (-1.75,1.25) to (.75,1.25);
\draw[thick, dotted] (-1.75,-1) to (.75,-1);
}} \endxy
=
\xy (0,0)*{
\tikzdiagc[scale=.54,xscale=1,yscale=1]{
\fill[blue!10] (-.425,-1.125) to[out=90,in=-90] (-1.125,1.125) to (1.125,1.125) to[out=-90,in=90] (.425,-1.125)--cycle;
\fill[white]  (-.5,1.125) to[out=-85,in=180] (0,.325) to[out=0,in=-95] (.5,1.125)--cycle;
\draw[thick,Orange,densely dashed] (-.8,1.125) to[out=-75,in=180] (0,0) to[out=0,in=-105] (.8,1.125); 
\draw[thick] (-.5,1.125) to[out=-85,in=180] (0,.325) to[out=0,in=-95] (.5,1.125);
\draw[thick,-to] (.43,.7) -- (.46,.8);  
\draw[thick] (-.425,-1.125) to[out=90,in=-90] (-1.125,1.125);
\draw[thick,-to] (-.77,0) -- (-.79,.05); 
\draw[thick] ( .425,-1.125) to[out=90,in=-90] (1.125,1.125);
\draw[thick,to-] (.7,-.15) -- (.82,.1); 
\draw[thick, dotted] (-1.25,1.125) to (1.25,1.125);\draw[thick, dotted] (-1.25,-1.125) to (1.25,-1.125);
}} \endxy
\\[1ex]
\xy (0,1)*{
\tikzdiagc[scale=.6]{
\draw[ultra thick,blue] (0,-1.125)-- (0,.29);
\draw[ultra thick,blue] (-1,-1.125) to[out=75,in=-150] (0,.25);
\draw[ultra thick,blue] ( 1,-1.125) to[out=105,in=-30] (0,.25);
\draw[blue,fill=blue] (0,.23) circle (1.5pt);
\draw[ultra thick,blue,to-] (0,-.30) -- (0,-.32);
\draw[ultra thick,blue,to-] (-.70,-.4) -- (-.75,-.5);
\draw[ultra thick,blue,to-] ( .70,-.4) -- ( .75,-.5);
\draw[thick, dotted] (-1.375,1.125) to (1.375,1.125);\draw[thick, dotted] (-1.375,-1.125) to (1.375,-1.125);
}} \endxy
:=
\xy (0,0)*{
\tikzdiagc[scale=.6,yscale=1]{
\draw[ultra thick,blue] (-.5,-1.125) to[out=90,in=-135] (0,0);
\draw[ultra thick,blue] (.5,-1.125) to[out=90,in=-45] (0,0);
\draw[ultra thick,blue] (0,0) to[out=90,in=180] (.6,.6) to[out=0,in=90] (1.5,-1.125);
\draw[ultra thick,blue,to-] (1.44,-.4) -- (1.46,-.5);
\draw[ultra thick,blue,to-] (-.36,-.4) -- (-.38,-.5);
\draw[ultra thick,blue,to-] ( .36,-.4) -- ( .38,-.5);
\draw[thick, dotted] (-.875,1.125) to (1.875,1.125);\draw[thick, dotted] (-.875,-1.125) to (1.875,-1.125);
}} \endxy
=
\xy (0,0)*{
\tikzdiagc[scale=.6,xscale=1,yscale=1]{
\fill[blue!10] (-1.25,-1.125) to[out=90,in=180] (0,.65) to[out=0,in=90] (1.25,-1.125)--cycle;
\fill[white]  ( .75,-1.125) to[out=90,in=0] (.5,-.25) to[out=180,in=90] (.25,-1.125)--cycle;
\fill[white] (-.75,-1.125) to[out=90,in=180] (-.5,-.25) to[out=0,in=90] (-.25,-1.125) --cycle;
\draw[thick,Orange,densely dashed] (-.5,-.25) to[out=90,in=180] (0,.3) to[out=0,in=90] (.5,-.25); 
\draw[thick] (-1.25,-1.125) to[out=90,in=180] (0,.65) to[out=0,in=90] (1.25,-1.125);
\draw[thick] (-.75,-1.125) to[out=90,in=180] (-.5,-.25) to[out=0,in=90] (-.25,-1.125);
\draw[thick] ( .75,-1.125) to[out=90,in=0] (.5,-.25) to[out=180,in=90] (.25,-1.125);
\draw[fill=black] (-.5,-.25) circle (1.5pt);
\draw[fill=black] ( .5,-.25) circle (1.5pt);
\draw[thick,-to] (-1.22,-.65) -- (-1.22,-.6); 
\draw[thick,to-] (-.76,-.75) -- (-.76,-.7); 
\draw[thick,to-] (.24,-.75) -- (.24,-.7); 
\draw[thick, dotted] (-1.5,1.125) to (1.5,1.125);\draw[thick, dotted] (-1.5,-1.125) to (1.5,-1.125);
}} \endxy
\end{gather*}\endgroup

At a trivalent vertex, the three edges either all oriented into the vertex, or one edge is oriented in and two edges out. The number of ``out'' oriented edges at each vertex is even.  

The degrees of various maps are summarized the table below. 
\begin{center}
\renewcommand{\arraystretch}{2}
  \begin{tabular}{|c|c|c|c|c|c|c|}
\hline
  map  &
$\xy (0,0)*{
\tikzdiagc[scale=.35,xscale=-1,yscale=-1]{
\draw[ultra thick,blue,to-] (-1,1) to[out=-75,in=180] (0,-.25) to[out=0,in=-105] (1,1);
}} \endxy$
  &
$\xy (0,0)*{
\tikzdiagc[scale=.35,xscale=1,yscale=-1]{
\draw[ultra thick,blue,to-] (-1,1) to[out=-75,in=180] (0,-.25) to[out=0,in=-105] (1,1);
}} \endxy$
  &
$\xy (0,0)*{
\tikzdiagc[scale=.35,xscale=-1,yscale=1]{
\draw[ultra thick,blue,to-] (-1,1) to[out=-75,in=180] (0,-.25) to[out=0,in=-105] (1,1);
}} \endxy$
  &
$\xy (0,0)*{
\tikzdiagc[scale=.35,xscale=1,yscale=1]{
\draw[ultra thick,blue,to-] (-1,1) to[out=-75,in=180] (0,-.25) to[out=0,in=-105] (1,1);
}} \endxy$
  &
$\xy (0,0)*{
\tikzdiagc[scale=.35]{
  \draw[ultra thick,blue] (0,-.4) -- (0, .25)node[pos=1, tikzdot]{};
  \draw[ultra thick,blue,-to] (0,-1) to (0,-.3); 
\draw[thick, dotted] (-1,1) to (1,1);\draw[thick, dotted] (-1,-1) to (1,-1);
}} \endxy$ 
  &
$\xy (0,0)*{
\tikzdiagc[scale=.35]{
\draw[very thick,Orange,densely dashed] (0,-1) to (0,.125); 
  \draw[ultra thick,blue,-to] (0,0) -- (0,1)node[pos=0, tikzdot]{}; 
\draw[thick, dotted] (-1,1) to (1,1);\draw[thick, dotted] (-1,-1) to (1,-1);
}} \endxy$  
  \\[1ex]
\hline
degree  & 0& 0 & 0 &0 &1 & 1  \\
\hline
\hline
  map  &
    $\xy (0,0)*{
\tikzdiagc[scale=.35,xscale=-1]{
\draw[ultra thick,blue] (-1,-1)-- (1, 1); 
\draw[ultra thick,blue,-to] (.6,.6) -- (.7,.7);
\draw[very thick,Orange,densely dashed] (1,-1)-- (-1, 1);  
}} \endxy$
  &
    $\xy (0,0)*{
\tikzdiagc[scale=.35]{
\draw[ultra thick,blue] (-1,-1)-- (1, 1); 
\draw[ultra thick,blue,-to] (.6,.6) -- (.7,.7);
\draw[very thick,Orange,densely dashed] (1,-1)-- (-1, 1);  
}} \endxy$
  &
$\xy (0,0)*{
\tikzdiagc[scale=.35,yscale=-1]{
    \draw[ultra thick,blue,] (0,0)-- (0,1);
    \draw[ultra thick,blue,to-] (-1,-1) -- (0,0);
    \draw[ultra thick,blue,to-] (1,-1) -- (0,0);
\draw[ultra thick,blue,to-] (0,.2) -- (0,.3);
}} \endxy$
  &
$\xy (0,0)*{
\tikzdiagc[scale=.35]{
\draw[ultra thick,blue] (0,0)-- (0,1); \draw[ultra thick,blue] (-1,-1) -- (0,0); \draw[ultra thick,blue] (1,-1) -- (0,0);
\draw[ultra thick,blue,-to] (0,.6) -- (0,.7);
\draw[ultra thick,blue,-to] (-.6,-.6) -- (-.7,-.7);
\draw[ultra thick,blue,-to] ( .4,-.4) -- ( .3,-.3);
}} \endxy$
  &
$\xy (0,0)*{
\tikzdiagc[scale=.35,xscale=1,yscale=1]{
\draw[very thick,Orange,densely dashed] (-1,1) to[out=-75,in=180] (0,-.125) to[out=0,in=-105] (1,1);
}} \endxy$
  &
$\xy (0,0)*{
\tikzdiagc[scale=.35,xscale=1,yscale=-1]{
\draw[very thick,Orange,densely dashed] (-1,1) to[out=-75,in=180] (0,-.125) to[out=0,in=-105] (1,1);
}} \endxy$
  \\[1ex]
\hline
degree  & 0& 0& -1& -1& 0& 0 \\
\hline
\end{tabular}
\end{center}

It's also convenient to introduce a crossings of a downward-oriented blue line with dashed orange line, a degree $0$ map defined as shown below. 

\begin{equation*}
\xy (0,0)*{
\tikzdiagc[scale=.5,xscale=1,yscale=-1]{
\draw[ultra thick,blue] (-1,-1)-- (1, 1); 
\draw[ultra thick,blue,-to] (.6,.6) -- (.7,.7);
\draw[very thick,Orange,densely dashed] (1,-1)-- (-1, 1);  
}} \endxy
  :=
  \xy (0,0)*{
\tikzdiagc[scale=.5,xscale=1,yscale=-1]{
\draw[very thick,Orange,densely dashed] (.35,.35) to[out=-45,in=90] (1,-1);  
\draw[very thick,Orange,densely dashed] (-.35,-.35) to[out=135,in=-90] (-1,1);  
\draw[ultra thick,blue] (-1,-1)-- (1, 1); 
\draw[ultra thick,blue,to-] (-.5,-.5) -- (-.6,-.6);
\draw[ultra thick,blue,-to] (.8,.8) -- (.9,.9);
\draw[ultra thick,blue,-to] (0,0) -- (-.1,-.1);
\draw[blue,fill=blue] (-.3,-.3) circle (2.5pt);
\draw[blue,fill=blue] (.35,.35) circle (2.5pt);
}} \endxy
=
  \xy (0,0)*{
\tikzdiagc[scale=.5,xscale=1,yscale=-1]{
\draw[very thick,Orange,densely dashed] (-.35,-.35) to[out=-20,in=130] (1,-1);  
\draw[very thick,Orange,densely dashed] ( .35, .35) to[out=160,in=-60] (-1,1);  
\draw[ultra thick,blue] (-1,-1)-- (1, 1); 
\draw[ultra thick,blue,to-] (-.5,-.5) -- (-.6,-.6);
\draw[ultra thick,blue,-to] (.8,.8) -- (.9,.9);
\draw[ultra thick,blue,-to] (0,0) -- (-.1,-.1);
\draw[blue,fill=blue] (-.3,-.3) circle (2.5pt);
\draw[blue,fill=blue] (.35,.35) circle (2.5pt);
}} \endxy
\mspace{70mu}
\xy (0,0)*{
\tikzdiagc[scale=.5,xscale=-1,yscale=-1]{
\draw[ultra thick,blue] (-1,-1)-- (1, 1); 
\draw[ultra thick,blue,-to] (.6,.6) -- (.7,.7);
\draw[very thick,Orange,densely dashed] (1,-1)-- (-1, 1);  
}} \endxy
  :=
  \xy (0,0)*{
\tikzdiagc[scale=.5,xscale=-1,yscale=-1]{
\draw[very thick,Orange,densely dashed] (.35,.35) to[out=-45,in=90] (1,-1);  
\draw[very thick,Orange,densely dashed] (-.35,-.35) to[out=135,in=-90] (-1,1);  
\draw[ultra thick,blue] (-1,-1)-- (1, 1); 
\draw[ultra thick,blue,to-] (-.5,-.5) -- (-.6,-.6);
\draw[ultra thick,blue,-to] (.8,.8) -- (.9,.9);
\draw[ultra thick,blue,-to] (0,0) -- (-.1,-.1);
\draw[blue,fill=blue] (-.3,-.3) circle (2.5pt);
\draw[blue,fill=blue] (.35,.35) circle (2.5pt);
}} \endxy
=
\xy (0,0)*{
\tikzdiagc[scale=.5,xscale=-1,yscale=-1]{
\draw[very thick,Orange,densely dashed] (-.35,-.35) to[out=-20,in=130] (1,-1);  
\draw[very thick,Orange,densely dashed] ( .35, .35) to[out=160,in=-60] (-1,1);  
\draw[ultra thick,blue] (-1,-1)-- (1, 1); 
\draw[ultra thick,blue,to-] (-.5,-.5) -- (-.6,-.6);
\draw[ultra thick,blue,-to] (.8,.8) -- (.9,.9);
\draw[ultra thick,blue,-to] (0,0) -- (-.1,-.1);
\draw[blue,fill=blue] (-.3,-.3) circle (2.5pt);
\draw[blue,fill=blue] (.35,.35) circle (2.5pt);
}} \endxy
\end{equation*}

Also, the following relations hold. 
\begin{equation*}
\xy (0,0)*{
\tikzdiagc[scale=.5,yscale=-1]{
\draw[very thick,Orange,densely dashed] (-1,1.25) to[out=-90,in=180] (0,.4) to[out=0,in=90] (1.5,-1);  
\draw[ultra thick,blue,] (0,0)-- (0,1.25);
\draw[ultra thick,blue,to-] (-1,-1) -- (0,0);
\draw[ultra thick,blue,to-] (1,-1) -- (0,0);
\draw[ultra thick,blue,to-] (0,.6) -- (0,.7);
}} \endxy
=
\xy (0,0)*{
\tikzdiagc[scale=.5,yscale=-1]{
\draw[very thick,Orange,densely dashed] (-1,1.25) to[out=-90,in=170] (0,-.3) to[out=-10,in=90] (1.5,-1);  
\draw[ultra thick,blue,] (0,.25)-- (0,1.25);
\draw[ultra thick,blue,to-] (-1,-1) -- (0,.25);
\draw[ultra thick,blue,to-] (1,-1) -- (0,.25);
\draw[ultra thick,blue,to-] (0,.6) -- (0,.7);
}} \endxy
\mspace{90mu}
\xy (0,0)*{
\tikzdiagc[scale=.4,yscale=1]{
\draw[ultra thick,blue,to-] (0,-1.5) -- (0,-.75);
\draw[ultra thick,blue,-to] (0,.75)-- (0,1.5);
\draw[ultra thick,blue] (0,-.75) to[out=45,in=-90] (.6,0) to[out=90,in=-45] (0,.75);
\draw[ultra thick,blue] (0,-.75) to[out=135,in=-90] (-.6,0) to[out=90,in=-135] (0,.75);
\draw[ultra thick,blue,-to] (.6,-.15) -- (.6,-.2);
\draw[ultra thick,blue,-to] (-.6,.15) -- (-.6,.2);
}} \endxy
\ =\ 0
\end{equation*}

Similar to the decomposition of $B\otimes B$ and using oriented lines, we obtain the following direct sum decomposition  
\begin{equation*}
    B\otimes \undB \ \cong \ B\{1\} \oplus \undB\{-1\}
\end{equation*}
from Corollary~\ref{cor_dir_sum} diagrammatically (note the minus sign in one of the maps). 

\begin{equation}\label{eq_dir_sum}
\xy (0,0)*{
\tikzdiagc[scale=.5]{
  \node at (-5.4,0) {\small \text{$B\{1\}$}};
\node at ( 0,0) {\small \text{$B\otimes\undB$}};
\node at ( 5.50,0) {\small \text{$\undB\{-1\}$}};
\draw[->] (-4.5,.25) to[out=20,in=160] (-1,.25);
\draw[<-] (-4.5,-.25) to[out=-20,in=-160] (-1,-.25);
\draw[<-] ( 4.5,.25) to[out=160,in=20] ( 1,.25);
\draw[->] ( 4.5,-.25) to[out=-160,in=-20] ( 1,-.25);
\begin{scope}[scale=.95,shift={(-3,2)}]
\node at (-1.8,0) {\text{$-$}};
  \draw[ultra thick,blue] (0,-1)-- (0,-.2); \draw[ultra thick,blue] (-1,1) -- (-.55,.35); \draw[ultra thick,blue] (1,1) -- (0,-.2);
\draw[very thick,Orange,densely dashed] (0,-.2) to (-.55,.35);   
 \draw[blue,fill=blue] (0,-.2) circle (2.5pt);
 \draw[blue,fill=blue] (-.55,.35) circle (2.5pt);
 \draw[ultra thick,blue,-to] (0,-.5) -- (0,-.4);
\draw[ultra thick,blue,-to] (.525,.45) -- (.405,.3);
\draw[ultra thick,blue,to-] (-.865,.8) -- (-.790,.7);
 \draw[thick, dotted] (-1.25,1) to (1.25,1);\draw[thick, dotted] (-1.25,-1) to (1.25,-1);
\end{scope}
\begin{scope}[scale=.95,shift={(-3,-2)}]
\draw[ultra thick,blue] (0,0)-- (0,1); \draw[ultra thick,blue] (-1,-1) -- (0,0); \draw[ultra thick,blue] (1,-1) -- (0,0);
\draw[ultra thick,blue,-to] (0,.6) -- (0,.7);
\draw[ultra thick,blue,-to] (-.4,-.4) -- (-.3,-.3);
\draw[ultra thick,blue,to-] (.7,-.7) -- (.6,-.6);
\draw[thick, dotted] (-1.25,1) to (1.25,1);\draw[thick, dotted] (-1.25,-1) to (1.25,-1);
\end{scope}
\begin{scope}[scale=.95,shift={(3,2)}]
\draw[ultra thick,blue] (-.5,-1)-- (-.5,0)node[pos=1, tikzdot]{};
\draw[ultra thick,blue] (.5,-1) -- (.5,1);
\draw[ultra thick,blue,-to] (-.5,-.4) -- (-.5,-.3);
\draw[ultra thick,blue,-to] (.5,-.1) -- (.5,-.3);
\draw[thick, dotted] (-1.25,1) to (1.25,1);\draw[thick, dotted] (-1.25,-1) to (1.25,-1);
\end{scope}
\begin{scope}[scale=.95,shift={(3,-2)}]
\draw[ultra thick,blue] (0,0)-- (0, -1); \draw[ultra thick,blue] (-1,1) -- (0,0); \draw[ultra thick,blue] (1,1) -- (0,0);
\draw[ultra thick,blue,-to] (0,-.6) -- (0,-.7);
\draw[ultra thick,blue,-to] (.4,.4) -- (.3,.3);
\draw[ultra thick,blue,to-] (-.7,.7) -- (-.6,.6);
\draw[thick, dotted] (-1.25,1) to (1.25,1);\draw[thick, dotted] (-1.25,-1) to (1.25,-1);
\end{scope}
}} \endxy  
\end{equation}
 There's flexibility in choosing some arrows in a direct sum decomposition of $B\otimes\undB$, as in the earlier discussion about the equivalent case of decomposing $B\otimes B$.

\begin{rem}
Let $\sbim$ be the monoidal category of 2-variable odd Soergel bimodules generated by bimodules $B,\undR$ and their grading shifts (see more details about $\sbim$ in Section~\ref{sec_groth}).  Bimodule $B$ has an antiinvolution $\phi$ given by $\phi(x\otimes y)=y\otimes x$. Bimodule $\undR$ has an antiinvolution $\phi$ given by $\phi(x\undone y)=y\undone x$. 
Antiinvolutions $\phi$ extend to an involutive antiequivalence $\phi:\sbim \lra \sbim^{\mathrm{op}}$ of the category $\sbim$ that takes $B$ to $B$ and $\undB$ to $\undB$. 
In our graphical description of $\sbim$ some generating maps are invariant under the  reflection in a vertical line, such as in (\ref{eq:enddot}). Our diagrammatical notations for several maps break reflectional symmetry, requiring adding  a minus sign to the reflected diagram, including in (\ref{eq:startdot}). These signs later propagate in formulas: for instance, observe the absence of signs in the direct sum decomposition given by (\ref{eq-no-signs}) and the presence of a single minus sign in the decomposition (\ref{eq_dir_sum}).  
\end{rem}

%
%

\section{Odd Rouquier complexes and invertibility}
Consider the following two-term complexes of graded $B$-modules, where $\undB$ and $B$ terms are 
placed in cohomological degree $0$. 
The differential is given by maps (\ref{eq:enddot}) and the map $\underline{\Delta}$ obtained from (\ref{eq:startdot}) by tensoring with $\undone$, respectively. 
\begin{align*}
\eR & := 0\lra B\xrightarrow{\ m\ } R\{-1\} \lra 0, \\ 
\eR' & := 0\lra R\{1\}\xrightarrow{ \underline{\Delta} 
} \undB \lra 0.
\end{align*}

Complexes $\eR,\eR'$ can be viewed as odd analogues of the Rouquier complexes. 

\begin{thm}
There are homotopy equivalences of complexes of graded $B$-modules 
\( \eR\otimes_R\eR'\cong_h R\) and \( \eR'\otimes_R\eR\cong_h R\).
\end{thm}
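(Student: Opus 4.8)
The plan is to write out the total complex $\eR\otimes_R\eR'$, substitute the direct sum decomposition of $B\otimes_R\undB$ from Corollary~\ref{cor_dir_sum}, and then collapse the resulting complex to $R$ by performing Gaussian elimination twice; the equivalence $\eR'\otimes_R\eR\cong_h R$ is obtained by the mirror-image argument. Concretely, placing $\eR$ in cohomological degrees $0,1$ and $\eR'$ in degrees $-1,0$ as in the statement, $\eR\otimes_R\eR'$ is the complex
\[
0\lra B\{1\}\xrightarrow{\,d^{-1}\,}(B\otimes_R\undB)\oplus R\xrightarrow{\,d^{0}\,}\undB\{-1\}\lra 0
\]
in cohomological degrees $-1,0,1$, where $d^{-1}$ has components $\id_B\otimes\underline{\Delta}$ and $m$, and $d^{0}$ has components $m\otimes\id_{\undB}$ and $-\,\id_R\otimes\underline{\Delta}$ (the sign from the Koszul convention for the tensor product of complexes; we use $B\otimes_R R\{1\}\cong B\{1\}$, $R\{-1\}\otimes_R R\{1\}\cong R$, $R\{-1\}\otimes_R\undB\cong\undB\{-1\}$). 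Since $\underline{\Delta}$ is the inclusion in the short exact sequence \eqref{eq:SESBuB} and $B$ is free as a one-sided $R$-module, $d^{-1}$ is injective; since $m$ is surjective, $m\otimes\id_{\undB}$ and hence $d^{0}$ are surjective. Thus $H^{-1}=H^{1}=0$, and an Euler-characteristic count already gives $[H^{0}]=[R]$; the point of the Gaussian eliminations below is to promote this to an actual homotopy equivalence.

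Next I would substitute $B\otimes_R\undB\cong B\{1\}\oplus\undB\{-1\}$ from Corollary~\ref{cor_dir_sum}, with inclusions $\iota_{1},\iota_{2}$ and projections $\pi_{1},\pi_{2}$ (built from the idempotents of Proposition~\ref{prop:decBB}, cf.\ \eqref{eq_dir_sum}). The middle term becomes $B\{1\}\oplus\undB\{-1\}\oplus R$, and $d^{-1}$ now has components $\pi_{1}(\id_B\otimes\underline{\Delta})$, $\pi_{2}(\id_B\otimes\underline{\Delta})$, $m$, while $d^{0}$ has components $(m\otimes\id_{\undB})\iota_{1}$, $(m\otimes\id_{\undB})\iota_{2}$, $-\,\underline{\Delta}$. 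The two computations that make the argument run are: \emph{(i)} the map $p:=\pi_{1}\circ(\id_B\otimes\underline{\Delta})\colon B\{1\}\to B\{1\}$ is an isomorphism, which I expect to be $\pm\id_{B\{1\}}$; and \emph{(ii)} after using \emph{(i)} to Gaussian-eliminate the copy of $B\{1\}$ in cohomological degree $-1$ against the $B\{1\}$-summand in degree $0$, the endomorphism of $\undB\{-1\}$ in the new differential — namely $(m\otimes\id_{\undB})\iota_{2}$ modified by the Gaussian correction term — is again an isomorphism. Both are checked directly from the explicit formulas for $m$ and $\underline{\Delta}$ (in \eqref{eq:enddot} and \eqref{eq:startdot}) together with the explicit maps of Corollary~\ref{cor_dir_sum}. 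Granting \emph{(i)} and \emph{(ii)}, the first Gaussian elimination reduces $\eR\otimes_R\eR'$ to $0\to\undB\{-1\}\oplus R\to\undB\{-1\}\to0$ in degrees $0,1$, and the second reduces it to $R$ concentrated in degree $0$, so $\eR\otimes_R\eR'\cong_h R$. Running the same scheme on the total complex $0\to B\{1\}\to R\oplus(\undB\otimes_R B)\to\undB\{-1\}\to0$ of $\eR'\otimes_R\eR$, now using the other isomorphism $\undB\otimes_R B\cong B\{1\}\oplus\undB\{-1\}$ of \eqref{eq_dirs2}, gives $\eR'\otimes_R\eR\cong_h R$.

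The hard part is \emph{(i)} and \emph{(ii)}, and more precisely the sign bookkeeping around them: the odd calculus is pervaded by minus signs (the twisted Leibniz rule for $\partial$, the signs in the relations of the Lemmas above, and the reflection-asymmetry of our diagrammatic conventions flagged in the Remark), and the direct sum decompositions of $B\otimes_R\undB$ supplied by Corollary~\ref{cor_dir_sum} are not canonical, so one has to fix a definite splitting and carry it faithfully through both eliminations. An essentially equivalent but perhaps more transparent alternative is to produce explicit mutually inverse chain maps between $R$ and $\eR\otimes_R\eR'$ directly out of the biadjointness units and counits $\alpha_{2},\beta_{2},\alpha_{3},\beta_{3}$ of Proposition~\ref{prop_adj_minus}, together with an explicit contracting homotopy; in that formulation the same verification reappears as the zig-zag (adjunction) identities combined with the short exact sequences \eqref{exact_seq_1} and \eqref{eq:SESBuB}.
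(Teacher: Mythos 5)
Your proposal is correct and takes essentially the same approach as the paper: both form the total complex, exploit the decomposition $B\otimes\undB\cong B\{1\}\oplus\undB\{-1\}$, and cancel the two matching pairs to leave $R$ in degree~$0$; the paper phrases the cancellation as an explicit splitting of the total complex into three subcomplexes (two contractible plus $R$), which is the same thing as your two Gaussian eliminations. Your claims (i) and (ii) are exactly the paper's relations $h_0d_1=\id_{B\{1\}}$ and $d_3h_3=\id_{\undB\{-1\}}$, which the paper verifies by a clever choice of splitting adapted to the differentials (taking $d_1$ itself as the inclusion of $B\{1\}$ into $B\otimes\undB$ and a rotated trivalent vertex $h_3$ as the inclusion of $\undB\{-1\}$); one small inaccuracy in your write-up is the phrase about "Gaussian correction terms," which in this particular complex are both zero since the eliminated summand exhausts the entire adjacent degree in each step.
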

Here $R$ denotes the identity $R$-bimodule, viewed as a complex concentrated in homological degree $0$. 

\begin{proof}
The complex $\eR\otimes_R\eR'$ is given by forming a commutative square of bimodules below  

\begin{equation*}
\xy (0,0)*{
\tikzdiagc[scale=.7]{
\node at (0,0) {\text{$B\otimes R\{1\}$}};
\node at (6,0) {\text{$R\otimes R$}};
\node at (0,-3) {\text{$B\otimes\undB$}};
\node at (6,-3) {\text{$R\otimes \undB\{-1\}$}};
\draw[thick,-to] (1.5,0) to (4.25,0);
\draw[thick,-to] (1.5,-3) to (4.25,-3);
\draw[thick,-to] (0,-.5) to (0,-2.5);
\draw[thick,-to] (6,-.5) to (6,-2.5);
\node at (3,-.45) {\small \text{$d_0$}};
\node at (3,-3.45) {\small \text{$d_3$}};
\node at (.5,-1.5) {\small \text{$d_1$}};
\node at (5.5,-1.5) {\small \text{$d_2$}};
\begin{scope}[shift={(2.8,.80)},scale=.6]
\draw[ultra thick,blue] (0,-.4) -- (0, .25)node[pos=1, tikzdot]{};
\draw[ultra thick,blue,-to] (0,-1) to (0,-.3); 
\draw[thick, dotted] (-1,1) to (1,1);\draw[thick, dotted] (-1,-1) to (1,-1);
\end{scope}
\begin{scope}[shift={(2.8,-2.20)},scale=.6]
\draw[ultra thick,blue] (-.5,-.4) -- (-.5, .25)node[pos=1, tikzdot]{};
\draw[ultra thick,blue,-to] (-.5,-1) to (-.5,-.3); 
\draw[ultra thick,blue] (.5,-1) -- (.5, 1);
\draw[ultra thick,blue,-to] (.5,-.2) to (.5,-.3); 
\draw[thick, dotted] (-1,1) to (1,1);\draw[thick, dotted] (-1,-1) to (1,-1);
\end{scope}
\begin{scope}[shift={(-.80,-1.5)},scale=.6]
\draw[ultra thick,blue] (.5,.4) -- (.5, -.025)node[pos=1, tikzdot]{};
\draw[ultra thick,blue,-to] (.5,1) to (.5,.3); 
\draw[ultra thick,blue] (-.5,-1) -- (-.5, 1);
\draw[ultra thick,blue,-to] (-.5,.2) to (-.5,.3); 
\draw[thick, dotted] (-1,1) to (1,1);\draw[thick, dotted] (-1,-1) to (1,-1);
\end{scope}
\begin{scope}[shift={(6.80,-1.5)},scale=.6,yscale=-1]
\draw[ultra thick,blue] (0,-.4) -- (0, .25)node[pos=1, tikzdot]{};
\draw[ultra thick,blue,-to] (0,-1) to (0,-.3); 
\draw[thick, dotted] (-1,1) to (1,1);\draw[thick, dotted] (-1,-1) to (1,-1);
\end{scope}
}}\endxy
\end{equation*}  
then adding a minus sign to  the map $d_2$ and collapsing the square into the complex $(C,\widehat{d})$ below. 

\begin{equation*}
\xy (0,0)*{
\tikzdiagc[scale=.6]{
\node at (-2,0) {\text{$C\colon$}};
\node at (0,0) {\text{$B\{1\}$}};
\node at (7,0) {\text{$R\oplus (B\otimes\undB)$}};
\node at (14,0) {\text{$\undB\{-1\}$}};
\draw[thick,-to] (1,0) to (5,0);
\draw[thick,-to] (9,0) to (12.80,0);
\node at (3,1) {\small \text{$\widehat{d}_{-1}=
    \begin{pmatrix}d_0\\d_1\end{pmatrix}
    $}};
\node at (10.85,1) {\small \text{$\widehat{d}_{0}=(-d_2,d_3)$}};
}}\endxy
\end{equation*}
Introduce maps $h_0,h_3$ and $\jmath$ between terms in the above commutative square, as shown below. 

\begin{equation*}
\xy (0,0)*{
\tikzdiagc[scale=.8]{
\node at (0,0) {\text{$B\{1\}$}};
\node at (8,0) {\text{$R$}};
\node at (0,-5) {\text{$B\otimes\undB$}};
\node at (8,-5) {\text{$\undB\{-1\}$}};
\draw[thick,-to] (1,0) to (7.25,0);
\draw[thick,-to] (1,-5) to (7.0,-5);
\draw[thick,-to] (0,-.5) to (0,-4.5);
\draw[thick,-to] (8,-.5) to (8,-4.5);
\node at (3,.75) {\small \text{$d_0=$}};
\node at (3.8,-4.2) {\small \text{$d_3=$}};
\node at (.65,-1.45) {\small \text{$d_1=$}};
\node at (8.7,-2.5) {\small \text{$d_2=$}};
\node at (8.6,-3.5) {\small \text{$(-)$}};
\begin{scope}[shift={(4.425,.825)},scale=.6]
\draw[ultra thick,blue] (0,-.4) -- (0, .25)node[pos=1, tikzdot]{};
\draw[ultra thick,blue,-to] (0,-1) to (0,-.3); 
\draw[thick, dotted] (-1,1) to (1,1);\draw[thick, dotted] (-1,-1) to (1,-1);
\end{scope}
\begin{scope}[shift={(5.0,-4.2)},scale=.6]
\draw[ultra thick,blue] (-.5,-.4) -- (-.5, .25)node[pos=1, tikzdot]{};
\draw[ultra thick,blue,-to] (-.5,-1) to (-.5,-.3); 
\draw[ultra thick,blue] (.5,-1) -- (.5, 1);
\draw[ultra thick,blue,-to] (.5,-.2) to (.5,-.3); 
\draw[thick, dotted] (-1,1) to (1,1);\draw[thick, dotted] (-1,-1) to (1,-1);
\end{scope}
\begin{scope}[shift={(1.95,-1.5)},scale=.6]
\draw[very thick,Orange,densely dashed] (.5,-.150) to (.5,-1); 
\draw[ultra thick,blue] (.5,.4) -- (.5, -.025)node[pos=1, tikzdot]{};
\draw[ultra thick,blue,-to] (.5,1) to (.5,.3); 
\draw[ultra thick,blue] (-.5,-1) -- (-.5, 1);
\draw[ultra thick,blue,-to] (-.5,.2) to (-.5,.3); 
\draw[thick, dotted] (-1,1) to (1,1);\draw[thick, dotted] (-1,-1) to (1,-1);
\end{scope}
\begin{scope}[shift={(9.9,-2.5)},scale=.6,yscale=-1]
\draw[ultra thick,blue] (0,-.4) -- (0, .25)node[pos=1, tikzdot]{};
\draw[ultra thick,blue,-to] (0,-1) to (0,-.3); 
\draw[thick, dotted] (-1,1) to (1,1);\draw[thick, dotted] (-1,-1) to (1,-1);
\end{scope}
\draw[densely dashed,-to] (-.5,-4.45) to[out=110,in=-110] (-.5,-.5);
\draw[densely dashed,-to] (7.5,-5.5) to[out=-170,in=-10] (.5,-5.5);
\draw[densely dashed,-to] (7.5,-.5) to (.8,-4.5);
\node at (-3.4,-2.5) {\small \text{$h_0=$}};
\node at ( 3,-6.75) {\small \text{$h_3=$}};
\begin{scope}[shift={(-2,-2.5)},scale=.6]
\draw[ultra thick,blue] (0,0)-- (0,1); \draw[ultra thick,blue] (-1,-1) -- (0,0); \draw[ultra thick,blue] (1,-1) -- (0,0);
\draw[ultra thick,blue,-to] (0,.6) -- (0,.7);
\draw[ultra thick,blue,-to] (-.4,-.4) -- (-.3,-.3);
\draw[ultra thick,blue,to-] (.7,-.7) -- (.6,-.6);
\draw[thick, dotted] (-1.25,1) to (1.25,1);\draw[thick, dotted] (-1.25,-1) to (1.25,-1);
\end{scope}
\begin{scope}[shift={(4.5,-6.75)},scale=.6,xscale=-1,yscale=-1]
\draw[ultra thick,blue] (0,0)-- (0,1); \draw[ultra thick,blue] (-1,-1) -- (0,0); \draw[ultra thick,blue] (1,-1) -- (0,0);
\draw[ultra thick,blue,-to] (0,.6) -- (0,.7);
\draw[ultra thick,blue,-to] (-.4,-.4) -- (-.3,-.3);
\draw[ultra thick,blue,to-] (.7,-.7) -- (.6,-.6);
\draw[thick, dotted] (-1.25,1) to (1.25,1);\draw[thick, dotted] (-1.25,-1) to (1.25,-1);
\end{scope}
\begin{scope}[shift={(5.5,-2.1)},scale=.5]
\fill[white]  (-3.5,1.2) to (1.35,1.2) to (1.35,-1.02) to (-3.5,-1.02)--cycle;
\node at (-2.5,0) {\small \text{$j=$}};
\draw[ultra thick,blue,to-] (-1,1) to[out=-75,in=180] (0,-.25) to[out=0,in=-105] (1,1);
\draw[thick, dotted] (-1.25,1) to (1.25,1);\draw[thick, dotted] (-1.25,-1) to (1.25,-1);
\end{scope}
}}\endxy
\end{equation*}

The following relations hold 
\begin{equation*}
   h_0 d_1 = \id_{B\{1\}}, \ \ 
   d_2  =  d_3 \jmath, \ \ 
   d_3 h_3 = \id_{\undB\{-1\}}, \ \ h_0h_3 = 0. 
\end{equation*}
Thus, $d_1$ is split injective, with a section $h_0$. Likewise, $d_3$ is split surjective, with $h_3$ as a section. 

We would like to check that $B\otimes \undB=\mathrm{im}(d_1)\oplus \mathrm{im}(h_3)$. 
 Consider the map $d_3'$ given by  
 \begin{equation*}
d_3' =  
\xy (0,0)*{
\tikzdiagc[scale=.6,yscale=-1,xscale=-1]{
  \draw[ultra thick,blue] (0,-1)-- (0,-.2); \draw[ultra thick,blue] (-1,1) -- (-.55,.35); \draw[ultra thick,blue] (1,1) -- (0,-.2);
\draw[very thick,Orange,densely dashed] (0,-.2) to (-.55,.35);   
 \draw[blue,fill=blue] (0,-.2) circle (2.5pt);
 \draw[blue,fill=blue] (-.55,.35) circle (2.5pt);
 \draw[ultra thick,blue,-to] (0,-.5) -- (0,-.4);
\draw[ultra thick,blue,-to] (.525,.45) -- (.405,.3);
\draw[ultra thick,blue,to-] (-.865,.8) -- (-.790,.7);
 \draw[thick, dotted] (-1.25,1) to (1.25,1);\draw[thick, dotted] (-1.25,-1) to (1.25,-1);
}} \endxy
\end{equation*}

  Map $d_3'$ is a rotation  of the top left diagram in (\ref{eq_dir_sum}). 
 Then 
 \begin{equation}\label{eq-no-signs}
 d_3' d_1=0, \ \ h_0h_3=0,  \ \  d_3'h_3=\id_{\undB\{-1\}}, \ \
 d_1h_0+h_3 d_3' = \id_{B\otimes \undB}.
 \end{equation} 
  Pairs of maps $(d_1,h_0)$ and $(d'_3,h_3)$ give a direct sum decomposition $B\otimes \undB\cong B\{1\}\oplus \undB\{-1\}$.

Complex $C$ above splits into the direct sum of three subcomplexes: 
\begin{eqnarray*}
 0 \lra & B\{1\} & \stackrel{d_0+d_1}{\lra} B\{1\}\lra \ 0 , \\
        & 0  & \, \lra \ \ R \ \ \ \  \lra \ 0 , \\
        & 0  &  \lra h_3(\undB\{-1\}) \ \stackrel{d_3}{\lra} \ \undB\{-1\} \lra 0, 
\end{eqnarray*}
where the middle complex consists of pairs $(a,\jmath(a)), a\in R$. The first and third complexes are contractible, while the middle complex is the identity bimodule $R$. Consequently, \( \eR\otimes_R\eR'\cong_h R\). A similar computation, changing the order of terms in tensor products and reflecting all map diagrams about horizontal axes, shows that \( \eR'\otimes_R\eR\cong_h R\).
\end{proof}

\begin{cor}
Functors of tensoring with bimodule complexes $\eR$ and $\eR'$ are mutually-invertible functors in the homotopy category of complexes of graded $R$-modules. 
\end{cor}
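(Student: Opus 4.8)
The plan is to deduce the Corollary formally from the Theorem. First I would observe that, since the bimodules $R$, $B$ and $\undB$ are free (hence flat and projective) as left and as right $R$-modules, the operation $M\mapsto \eR\otimes_R M$ (respectively $M\mapsto \eR'\otimes_R M$) sends a complex of graded $R$-modules to a complex of graded $R$-modules, carries chain maps to chain maps, and --- crucially --- carries chain homotopies to chain homotopies. Hence it descends to an exact endofunctor of the homotopy category $K(R\dmod)$; write $\Phi=\eR\otimes_R-$ and $\Phi'=\eR'\otimes_R-$. Since $\eR$ and $\eR'$ are bounded (two-term) complexes, there are no boundedness or convergence subtleties. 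If one prefers to act on right modules the same argument works with $-\otimes_R\eR$.

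Next I would use associativity of $\otimes_R$, which gives for every complex $M$ of graded $R$-modules a natural isomorphism of complexes
\[
\Phi\circ\Phi'(M)=\eR\otimes_R(\eR'\otimes_R M)\ \cong\ (\eR\otimes_R\eR')\otimes_R M ,
\]
and likewise $\Phi'\circ\Phi(M)\cong(\eR'\otimes_R\eR)\otimes_R M$. By the Theorem there are homotopy equivalences $\eR\otimes_R\eR'\cong_h R$ and $\eR'\otimes_R\eR\cong_h R$ of complexes of graded $R$-bimodules, and every bimodule occurring in these complexes (and in the homotopies witnessing the equivalences) is free as a one-sided $R$-module; therefore applying $-\otimes_R M$ again preserves the homotopies and produces homotopy equivalences $(\eR\otimes_R\eR')\otimes_R M\cong_h R\otimes_R M=M$ that are natural in $M$. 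Composing these identifications yields natural isomorphisms $\Phi\circ\Phi'\cong\id_{K(R\dmod)}$ and $\Phi'\circ\Phi\cong\id_{K(R\dmod)}$ in the homotopy category, so $\Phi$ and $\Phi'$ are mutually quasi-inverse autoequivalences of $K(R\dmod)$, which is the assertion.

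There is no real obstacle here beyond the Theorem itself: the only point needing a moment's care is the bookkeeping that tensoring over $R$ with an arbitrary complex $M$ is well defined on the homotopy category and respects the homotopies built in the proof of the Theorem, and this is guaranteed by the freeness of $B$, $\undB$ and $R$ as left and right $R$-modules. I would phrase the argument at the level of functors (rather than objects) so that the naturality of the isomorphisms $\Phi\Phi'\cong\id$ and $\Phi'\Phi\cong\id$ is explicit, and then invoke the standard fact that a pair of functors with such natural isomorphisms constitutes an adjoint equivalence, giving the claimed mutual invertibility.
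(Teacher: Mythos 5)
Your proposal is correct and takes the same route the paper intends: the paper states the Corollary with no proof because it regards it as an immediate formal consequence of the Theorem, and you have simply spelled out the standard bookkeeping (flatness/freeness of the terms, compatibility of $\otimes_R$ with homotopies, associativity) that justifies deducing mutual invertibility of the induced endofunctors of $K(R\dmod)$ from the homotopy equivalences $\eR\otimes_R\eR'\cong_h R$ and $\eR'\otimes_R\eR\cong_h R$.
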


\begin{prop} After removing contractible summands, 
complex $\eR^n$ for $n>0$ simplifies to the $(n+1)$-term complex, nontrivial in cohomological degrees from $0$ to $n$, with the head 
\begin{equation*}
  \cdots 
\xra{\xy (0,0)*{
\tikzdiagc[scale=.35]{
\draw[ultra thick,blue] (-.5,-1) -- (-.5, -.15)node[pos=1, tikzdot]{};
\draw[very thick, densely dashed,Orange] (.5,-1) -- (.5,.15);
\draw[ultra thick,blue] (.5,.15) -- (.5, 1)node[pos=0, tikzdot]{};
\draw[thick, dotted] (-1,1) to (1,1);
\draw[thick, dotted] (-1,-1) to (1,-1);
}} \endxy}
B\{5-n\}
\xra{\xy (0,0)*{
\tikzdiagc[scale=.35]{
\draw[ultra thick,blue] (-.5,-1) -- (-.5,-.45)node[pos=1, tikzdot]{};
\draw[very thick,densely dashed,Orange] (-.5,.45) to[out=-90,in=180] (-.1,-.1) to[out=0,in=-90] (.5,1);
\draw[ultra thick,blue] (-.5,.45) -- (-.5, 1)node[pos=0, tikzdot]{};
\draw[thick, dotted] (-1,1) to (1,1);
\draw[thick, dotted] (-1,-1) to (1,-1);
}} \endxy}
\undB\{3-n\}
\xra{\xy (0,0)*{
\tikzdiagc[scale=.35]{
\draw[ultra thick,blue] (-.5,-1) -- (-.5, -.15)node[pos=1, tikzdot]{};
\draw[very thick, densely dashed,Orange] (.5,-1) -- (.5,.15);
\draw[ultra thick,blue] (.5,.15) -- (.5, 1)node[pos=0, tikzdot]{};
\draw[thick, dotted] (-1,1) to (1,1);
\draw[thick, dotted] (-1,-1) to (1,-1);
}} \endxy}
B\{1-n\}
\xra{\xy (0,0)*{
\tikzdiagc[scale=.35]{
\draw[ultra thick,blue] (0,-1) -- (0,0)node[pos=1, tikzdot]{};
\draw[thick, dotted] (-1,1) to (1,1);
\draw[thick, dotted] (-1,-1) to (1,-1);
}} \endxy}
R\{-n\}\lra 0   
\end{equation*}
and the tail 
\begin{equation*}
0  \lra B\{n-1\}
\xra{\xy (0,0)*{
\tikzdiagc[scale=.35]{
\draw[ultra thick,blue] (-.5,-1) -- (-.5,-.45)node[pos=1, tikzdot]{};
\draw[very thick,densely dashed,Orange] (-.5,.45) to[out=-90,in=180] (-.1,-.1) to[out=0,in=-90] (.5,1);
\draw[ultra thick,blue] (-.5,.45) -- (-.5, 1)node[pos=0, tikzdot]{};
\draw[thick, dotted] (-1,1) to (1,1);
\draw[thick, dotted] (-1,-1) to (1,-1);
}} \endxy}
\undB\{n-3\}
\xra{\xy (0,0)*{
\tikzdiagc[scale=.35]{
\draw[ultra thick,blue] (-.5,-1) -- (-.5, -.15)node[pos=1, tikzdot]{};
\draw[very thick, densely dashed,Orange] (.5,-1) -- (.5,.15);
\draw[ultra thick,blue] (.5,.15) -- (.5, 1)node[pos=0, tikzdot]{};
\draw[thick, dotted] (-1,1) to (1,1);
\draw[thick, dotted] (-1,-1) to (1,-1);
}} \endxy}
B\{n-5\}
\xra{\xy (0,0)*{
\tikzdiagc[scale=.35]{
\draw[ultra thick,blue] (-.5,-1) -- (-.5,-.45)node[pos=1, tikzdot]{};
\draw[very thick,densely dashed,Orange] (-.5,.45) to[out=-90,in=180] (-.1,-.1) to[out=0,in=-90] (.5,1);
\draw[ultra thick,blue] (-.5,.45) -- (-.5, 1)node[pos=0, tikzdot]{};
\draw[thick, dotted] (-1,1) to (1,1);
\draw[thick, dotted] (-1,-1) to (1,-1);
}} \endxy}
\undB\{n-7\}
\xra{\xy (0,0)*{
\tikzdiagc[scale=.35]{
\draw[ultra thick,blue] (-.5,-1) -- (-.5, -.15)node[pos=1, tikzdot]{};
\draw[very thick, densely dashed,Orange] (.5,-1) -- (.5,.15);
\draw[ultra thick,blue] (.5,.15) -- (.5, 1)node[pos=0, tikzdot]{};
\draw[thick, dotted] (-1,1) to (1,1);
\draw[thick, dotted] (-1,-1) to (1,-1);
}} \endxy}
\cdots
\end{equation*}
for odd $n$ and 
\begin{equation*}
  0  \lra
\undB\{n-1\}
\xra{\xy (0,0)*{
\tikzdiagc[scale=.35]{
\draw[ultra thick,blue] (-.5,-1) -- (-.5, -.15)node[pos=1, tikzdot]{};
\draw[very thick, densely dashed,Orange] (.5,-1) -- (.5,.15);
\draw[ultra thick,blue] (.5,.15) -- (.5, 1)node[pos=0, tikzdot]{};
\draw[thick, dotted] (-1,1) to (1,1);
\draw[thick, dotted] (-1,-1) to (1,-1);
}} \endxy}
  B\{n-3\}
\xra{\xy (0,0)*{
\tikzdiagc[scale=.35]{
\draw[ultra thick,blue] (-.5,-1) -- (-.5,-.45)node[pos=1, tikzdot]{};
\draw[very thick,densely dashed,Orange] (-.5,.45) to[out=-90,in=180] (-.1,-.1) to[out=0,in=-90] (.5,1);
\draw[ultra thick,blue] (-.5,.45) -- (-.5, 1)node[pos=0, tikzdot]{};
\draw[thick, dotted] (-1,1) to (1,1);
\draw[thick, dotted] (-1,-1) to (1,-1);
}} \endxy}
\undB\{n-5\}
\xra{\xy (0,0)*{
\tikzdiagc[scale=.35]{
\draw[ultra thick,blue] (-.5,-1) -- (-.5, -.15)node[pos=1, tikzdot]{};
\draw[very thick, densely dashed,Orange] (.5,-1) -- (.5,.15);
\draw[ultra thick,blue] (.5,.15) -- (.5, 1)node[pos=0, tikzdot]{};
\draw[thick, dotted] (-1,1) to (1,1);
\draw[thick, dotted] (-1,-1) to (1,-1);
}} \endxy}
B\{n-7\}
\xra{\xy (0,0)*{
\tikzdiagc[scale=.35]{
\draw[ultra thick,blue] (-.5,-1) -- (-.5,-.45)node[pos=1, tikzdot]{};
\draw[very thick,densely dashed,Orange] (-.5,.45) to[out=-90,in=180] (-.1,-.1) to[out=0,in=-90] (.5,1);
\draw[ultra thick,blue] (-.5,.45) -- (-.5, 1)node[pos=0, tikzdot]{};
\draw[thick, dotted] (-1,1) to (1,1);
\draw[thick, dotted] (-1,-1) to (1,-1);
}} \endxy}
\cdots
\end{equation*}
for even $n$.
Under the differential maps, $1\otimes_s 1$ and $1\otimes_s\undone$ are sent to $1\otimes_s\undone x_1 + x_2\otimes_s\undone\in \undB$ and to $1\otimes_sx_2-x_2\otimes_s1\in B$, respectively. 
\end{prop}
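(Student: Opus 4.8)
The plan is to argue by induction on $n$, using $\eR^n=\eR^{n-1}\otimes_R\eR$ together with the direct sum decompositions of Corollary~\ref{cor_dir_sum}. The base case $n=1$ is immediate: $\eR^1=\eR$ is already the displayed two-term complex $B\xrightarrow{m}R\{-1\}$, which serves simultaneously as ``head'' and ``tail''. For the inductive step write $\eR^{n-1}$ in its simplified form, with terms $N_0,\dots,N_{n-1}$ in cohomological degrees $0,\dots,n-1$; by the inductive hypothesis $N_{n-1}=R\{-(n-1)\}$ while each $N_k$ with $k<n-1$ is a grading shift of $B$ or of $\undB$, arranged in the alternating pattern of the statement. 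Since $\eR$ has $B$ in degree $0$ and $R\{-1\}$ in degree $1$, the total complex of $\eR^{n-1}\otimes_R\eR$ has, in cohomological degree $k$, the object $(N_k\otimes_R B)\oplus N_{k-1}\{-1\}$ (with $N_{-1}=N_n=0$), with differentials induced by $d^{\eR^{n-1}}\otimes\id$, by $\id\otimes m$, and with the usual Koszul signs in the total complex.

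Next I would decompose each $N_k\otimes_R B$. Tensoring the short exact sequence (\ref{exact_seq_1}) on the left with $N_k$ --- which is free as a one-sided $R$-module, so exactness is preserved --- yields
\begin{equation*}
0\lra (N_k\otimes_R\undR)\{1\}\xrightarrow{\ \id\otimes\Delta\ } N_k\otimes_R B\xrightarrow{\ \id\otimes m\ }N_k\{-1\}\lra 0 .
\end{equation*}
For $k<n-1$ this sequence splits --- the splitting is the one underlying Corollary~\ref{cor_dir_sum}, obtained by tensoring the orthogonal idempotents of Proposition~\ref{prop:decBB} --- so $N_k\otimes_R B\cong(N_k\otimes_R\undR)\{1\}\oplus N_k\{-1\}$ with $\id\otimes m$ the projection onto the second summand; moreover $N_k\otimes_R\undR$ is again a shift of $B$ or of $\undB$ by the isomorphisms (\ref{eq_blue_red}) and (\ref{eq_iso_two_undR_1}), a $B$-shift turning into a $\undB$-shift and conversely. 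For $k=n-1$, however, $N_{n-1}\otimes_R B=B\{1-n\}$ and $\id\otimes m$ is just $m\colon B\{1-n\}\to R\{-n\}$, which does \emph{not} split; so no cancellation occurs at the top.

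I would then simplify by Gaussian elimination (cancelling contractible direct summands): for $k=0,1,\dots,n-2$ the summand $N_k\{-1\}\subseteq N_k\otimes_R B$ sitting in cohomological degree $k$ is carried isomorphically by $\id\otimes m$ onto the summand $N_k\{-1\}=N_k\otimes_R R\{-1\}$ sitting in degree $k+1$, and this contractible pair is cancelled. Carrying out all of these cancellations in order, exactly one object survives in each cohomological degree: $(N_k\otimes_R\undR)\{1\}$ in degree $k$ for $0\le k\le n-2$, then $B\{1-n\}$ in degree $n-1$, then $R\{-n\}$ in degree $n$. Feeding in the head and tail of $\eR^{n-1}$ supplied by the inductive hypothesis and bookkeeping the grading shifts (the extra $\{1\}$ combines with the shift of $N_k$; the $B\leftrightarrow\undB$ swap is what makes the parity of $n$ dictate which of $B,\undB$ occupies degree $0$) reproduces exactly the head and tail of $\eR^n$ as stated; one checks the small cases $n=2,3$ directly to see the bookkeeping is correctly aligned. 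For the differentials of the simplified complex, the Gaussian elimination formula adds to each surviving map the correction $-(\text{incoming})\circ(\text{cancelled iso})^{-1}\circ(\text{outgoing})$; tracing $m$ of (\ref{eq:enddot}) and $\Delta$ of (\ref{eq:startdot}) through the splittings of (\ref{exact_seq_1}) and (\ref{eq:SESBuB}) identifies the surviving differentials as the alternating composites $\undB\cong B\otimes_R\undR\xrightarrow{m\otimes\id}\undR\xrightarrow{\Delta}B$ and $B\xrightarrow{m}R\xrightarrow{\underline{\Delta}}\undB$ (each up to the recorded shift), capped by the single copy of $m\colon B\{1-n\}\to R\{-n\}$ --- these are precisely the string diagrams displayed in the statement. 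Evaluating these composites on the generators gives $1\otimes_s 1\mapsto 1\otimes_s\undone x_1+x_2\otimes_s\undone$ and $1\otimes_s\undone\mapsto 1\otimes_s x_2-x_2\otimes_s 1$, which is the last assertion.

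I expect the main difficulty to be signs and coherence: keeping the Koszul signs and the Gaussian-elimination correction signs consistent through the iterated total-complex construction, and choosing the splittings of (\ref{exact_seq_1}) and (\ref{eq:SESBuB}) coherently enough across the induction that the surviving differentials come out as the displayed diagrams on the nose rather than merely up to sign and isomorphism. By contrast the purely module-theoretic inputs --- that $m\colon B\to R\{-1\}$ is not split, that the stated summands are contractible, and the formulas on generators --- are routine computations.
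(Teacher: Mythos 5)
Your proposal follows precisely the route the paper itself indicates (``induction on $n$ and a direct computation using Gauss elimination''), and the details you fill in are correct: the identification of the total complex of $\eR^{n-1}\otimes_R\eR$, the observation that tensoring the short exact sequence~\eqref{exact_seq_1} with $B$ or $\undB$ splits (by Corollary~\ref{cor_dir_sum}/Proposition~\ref{prop:decBB}) while tensoring with $R$ does not, and the cancellation pattern that leaves one term per cohomological degree. The evaluation of the two surviving differentials on generators also checks out (up to an overall sign that can be absorbed by rescaling, which you correctly flag as the delicate bookkeeping part). This is essentially the paper's proof, just written out.
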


The proposition can be proved by induction on $n$ and a direct computation using Gauss elimination. 
\qed

The case of $\eR'^n$ is similar:
\begin{prop}
After removing contractible summands,  complex $\eR'^n$ for $n>0$ reduces to the following $(n+1)$-term complex that lives in cohomological degrees from $-n$ to $0$:
\begin{equation*}
  0 \lra R\{n\}
\xra{\xy (0,0)*{
\tikzdiagc[scale=.35]{
\draw[very thick,densely dashed,Orange] (-.5,0) to[out=-90,in=180] (-.1,-.75) to[out=0,in=-90] (.5,1);
\draw[ultra thick,blue] (-.5,0) -- (-.5, 1)node[pos=0, tikzdot]{};
\draw[thick, dotted] (-1,1) to (1,1);
\draw[thick, dotted] (-1,-1) to (1,-1);
}} \endxy}
\undB\{n-1\}
\xra{\xy (0,0)*{
\tikzdiagc[scale=.35]{
\draw[ultra thick,blue] (-.5,-1) -- (-.5, -.15)node[pos=1, tikzdot]{};
\draw[very thick, densely dashed,Orange] (.5,-1) -- (.5,.15);
\draw[ultra thick,blue] (.5,.15) -- (.5, 1)node[pos=0, tikzdot]{};
\draw[thick, dotted] (-1,1) to (1,1);
\draw[thick, dotted] (-1,-1) to (1,-1);
}} \endxy}
B\{n-3\}
\xra{\xy (0,0)*{
\tikzdiagc[scale=.35]{
\draw[ultra thick,blue] (-.5,-1) -- (-.5,-.45)node[pos=1, tikzdot]{};
\draw[very thick,densely dashed,Orange] (-.5,.45) to[out=-90,in=180] (-.1,-.1) to[out=0,in=-90] (.5,1);
\draw[ultra thick,blue] (-.5,.45) -- (-.5, 1)node[pos=0, tikzdot]{};
\draw[thick, dotted] (-1,1) to (1,1);
\draw[thick, dotted] (-1,-1) to (1,-1);
}} \endxy}
\undB\{n-5\}
\xra{\xy (0,0)*{
\tikzdiagc[scale=.35]{
\draw[ultra thick,blue] (-.5,-1) -- (-.5, -.15)node[pos=1, tikzdot]{};
\draw[very thick, densely dashed,Orange] (.5,-1) -- (.5,.15);
\draw[ultra thick,blue] (.5,.15) -- (.5, 1)node[pos=0, tikzdot]{};
\draw[thick, dotted] (-1,1) to (1,1);
\draw[thick, dotted] (-1,-1) to (1,-1);
}} \endxy}
\cdots  . 
\end{equation*}

\end{prop}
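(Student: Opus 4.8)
The plan is to mimic the proof of the preceding proposition: induct on $n$, using $\eR'^{n+1}\cong\eR'^{n}\otimes_R\eR'$ together with the direct sum decompositions of Corollary~\ref{cor_dir_sum}. The base case $n=1$ is the definition of $\eR'$, which already has the asserted shape, with $R\{1\}$ in cohomological degree $-1$ and $\undB$ in degree $0$. For the inductive step I would write the reduced complex for $\eR'^{n}$ as $P^{-n}\to\cdots\to P^{0}$, with $P^{-n}=R\{n\}$ and $P^{-n+j}=\undB\{n+1-2j\}$ for $j$ odd, $P^{-n+j}=B\{n+1-2j\}$ for $j$ even, and then tensor over $R$ with the two-term complex $\eR'=(R\{1\}\xra{\underline\Delta}\undB)$. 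The associated total complex then has, in cohomological degree $-m$, the summand $P^{-m+1}\{1\}$ (coming from $P^{-m+1}\otimes_R R\{1\}$) together with $P^{-m}\otimes_R\undB$; the horizontal differential out of the first summand is $\pm(\id_{P^{-m+1}}\otimes\underline\Delta)$ and the vertical differential is $d_{\eR'^{n}}\otimes\id$.

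I would then substitute the isomorphisms $B\otimes_R\undB\cong B\{1\}\oplus\undB\{-1\}$ and $\undB\otimes_R\undB\cong B\{-1\}\oplus\undB\{1\}$ of Corollary~\ref{cor_dir_sum}, with fixed split inclusions and projections (obtained, via tensoring with $\undR$, from the idempotent decomposition of Proposition~\ref{prop:decBB}). After this substitution every $P^{-m}\otimes_R\undB$ with $P^{-m}$ a shift of $B$ or $\undB$ breaks into one $B$-summand and one $\undB$-summand, so each cohomological degree carries (generically) three summands, two of which I would remove by Gauss elimination: the shifted copy $P^{-m+1}\{1\}$ cancels against the type-and-degree-matching summand of $P^{-m+1}\otimes_R\undB$ one degree up, and the matching summand of $P^{-m}\otimes_R\undB$ cancels against $P^{-m}\{1\}$ one degree down. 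The key point is that the relevant differential component, $\pm(\id\otimes\underline\Delta)$ followed by the chosen projection, is an isomorphism onto that summand: this holds because $\undB$ is free, hence flat, over $R$ on each side and $\underline\Delta$ is a split monomorphism with cokernel $\undR\{-1\}$ --- equivalently, tensoring the short exact sequence~\eqref{eq:SESBuB} by $P^{-m}$ and applying the chosen splitting identifies the cokernel with the complementary summand. The sole exception is at the bottom, where $P^{-n}=R\{n\}$: here $\id_{R}\otimes\underline\Delta=\underline\Delta\colon R\{n+1\}\to\undB\{n\}$ fails to be invertible, so this bottom shifted copy is \emph{not} cancelled and survives as the term $R\{n+1\}$ of $\eR'^{n+1}$ in cohomological degree $-(n+1)$. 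Collecting survivors, each remaining degree keeps a single summand --- $R\{n+1\}$ at the bottom, $\undB\{n\}$ one step up (where $P^{-n}\otimes_R\undB=\undB\{n\}$ admits no splitting), and in degree $-n+j$ the summand of $P^{-n+j}\otimes_R\undB$ complementary to the cancelled one, whose internal shift is $\{n-2j\}=\{(n+1)+1-2(j+1)\}$ --- which is exactly the $(n+1)$-term complex claimed for $n+1$; the surviving differentials are the residual components of $\id\otimes\underline\Delta$ and of $m$, and are identified with the displayed dot and trivalent-vertex diagrams.

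I expect the main obstacle to be sign bookkeeping. The decompositions of Corollary~\ref{cor_dir_sum} are non-canonical (cf.\ the Remark following it) and several generating maps flip sign under reflection, so one must fix a coherent choice of all the splitting maps across cohomological degrees and then check that the Gauss elimination is carried out consistently, so that the alternating pattern $\undB,B,\undB,\dots$ and the shifts $\{n+1-2j\}$ come out with the right signs and not off by a uniform shift. A secondary, routine point is confirming at each stage that the relevant differential component is genuinely invertible rather than merely nonzero; for that I would use the identity $f=P(f)+x_2\partial(f)$ with $P(f)\in R^{s}$ from the proof of Proposition~\ref{prop:decBB} to make the splittings explicit. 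Once these choices are pinned down the induction is mechanical, and I would present it by spelling out $\eR'^{2}$ and $\eR'^{3}$ in full and then invoking the same Gauss elimination for the general inductive step.
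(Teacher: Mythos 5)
Your proposal is correct and follows the same route the paper indicates: induction on $n$ and Gauss elimination, using the direct sum decompositions of $B\otimes\undB$ and $\undB\otimes\undB$ from Corollary~\ref{cor_dir_sum} and the splitting of $\underline\Delta$ coming from the short exact sequence~\eqref{eq:SESBuB}. The paper leaves the computation to the reader ("The proposition can be proved by induction on $n$ and a direct computation using Gauss elimination"; "The case of $\eR'^n$ is similar"), and your account correctly identifies which components of the differential are invertible, which summands cancel, and why the bottom term $R\{n+1\}$ survives.
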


\begin{rem}
Adding the signed permutation bimodule to $\eR,\eR'$ gives the 2-strand motion braid group action on the homotopy category of graded $R$-modules, see A.-L.~Thiel~\cite{T} for the corresponding action of the group of motion braids or virtual braids in the even case for any number of strands. 
\end{rem}


\section{Grothendieck ring} 
\label{sec_groth}
Recall that $\sbim$ is the category of 2-variable odd Soergel bimodules generated as the monoidal category by bimodules $B$ and $\undR$ and their grading shifts. Hom spaces in this category are all grading-preserving homomorphisms of bimodules. 
We can also define the larger spaces 
\begin{equation*}
    \HOM_{\sbim}(M,N) \ := \ \oplus_{n\in \Z} \Hom_{\sbim}(M\{n\},N).
\end{equation*}
These $\HOM$ spaces are naturally modules over the center of $R$, 
\begin{equation}
Z(R)\ \cong \ \Z[x_1^2,x_2^2]\subset R.
\end{equation}
The left and right actions of $Z(R)$ on these hom spaces are not equal, in general.

Indecomposable objects of $\sbim$, up to shifts, are 
$ R,  \undR,  B, \undB,$
with tensor product decompositions
\begin{equation*}
    \undR\otimes \undR \cong R, \ \   \undB \cong B\otimes \undR \cong \undR\otimes B, \ \  B\otimes B \cong B\{-1\}\oplus \undB\{1\}. 
\end{equation*}
Consider the split Grothendieck ring $K_0$ of $\sbim$. Grading shift functor induces a $\Z[q,q^{-1}]$-module structure on $K_0(\sbim)$. The latter is a free rank four $\Z[q,q^{-1}]$-module with a basis 
\[  1 = [R], \ \  c := [\undR], \ \ b := [B], \ \ bc = [\undB],
\]
generators $b,c$, and multiplication rules 
\begin{equation}\label{eq_mult_rules}
    c^2=1, \ \ cb=bc, \ \ b^2 = q^{-1}b+qbc . 
\end{equation}

Since the element $c$ is central in $K_0(\sbim)$, the latter is a commutative associative $\Z[q,q^{-1}]$-algebra (commutativity fails for analogous algebras for three or more strands).   

Define a $\Z[q,q^{-1}]$-semilinear form on $K_0(\sbim)$ by 
\begin{equation*}
    ([M],[N]) \ := \ \mathrm{gdim}(\HOM(M,N)), 
\end{equation*}
where $\mathrm{gdim}$ denotes the graded dimension. This form is $\Z[q,q^{-1}]$-linear in the second variable and $\Z[q,q^{-1}]$-antilinear in the first variable. We have $(cm,cn)=(m,n)$, for $m,n\in K_0(\sbim)$, and   
\begin{eqnarray}\label{form_one}
    (1,1) & =& \frac{1}{(1-q^4)^2}, \\ (1,c) & = & (c,1) \ = \ 0, \\ (b,1)  & = & (1,bc) \ = \  \frac{q}{(1-q^4)^2} ,
    \label{eq_three} \\ (1,b) & = & (bc,1) \ =\ \frac{q^3}{(1-q^4)^2}. \label{eq_four} 
\end{eqnarray}
The inner product $(1,c)=0$ since $\HOM(R,\undR)=0$, which follows by a direct computation. 
The inner product $(b,1)$ above is computed via adjointness isomorphism 
\begin{equation*}
    \HOM(B,R)=\HOM(R\otimes_{R^s}R \{-1\},R) \cong \Hom({}_{R^s}R_R,{}_{R^s} R_R) \{1\}. 
\end{equation*}
An endomorphism $\xi$ of the $(R_s,R)$-bimodule $R$ is determined by $\xi(1)\in R$ which we write as 
$\xi(1)=h_{00}+h_{01}x_1+h_{10}x_2+h_{11}x_1x_2$, where $h_{ij}\in \kk[x_1^2,x_2^2]=Z(R)$. Commutativity relations $f \xi(1)=\xi(1)f$ for $f\in R^s$ can be reduced to those for generators $x_1-x_2,x_1x_2$ of $R^s$, leading to the relations $h_{01}=h_{10}=h_{11}=0$. Consequently, endomorphisms of this bimodule are in a bijection with central elements of $R$, via $\xi(1)=h_{00}\in Z(R)$. Passing to the graded dimension results in the above formula $(b,1)=q(1-q^4)^{-2}$.

Likewise, the inner product $(bc,1)$ is the graded dimension of 
\begin{equation*}
    \HOM(\undB,R)=\HOM(R\otimes_{R^s}\undR \{-1\},R) \cong \Hom({}_{R^s}\undR_R,{}_{R^s}  R_R) \{1\}. 
\end{equation*}

The generator of the hom space $\HOM(\undB,R)$ is given by the degree 3 map below. There $x_1^2$ in a box denotes the bimodule map of multiplication by the central element $x_1^2$ of $R$. Replacing $x_1^2$ by $x_2^2$ in the middle box reverses the sign of the map. 

\begin{equation*}
\xy (0,0)*{
\tikzdiagc[scale=.7]{
\fill[blue!10] (-.5,-1.25) to (-.5,-1.125) to[out=90,in=-90] (-1.5,.75) to[out=90,in=180] (0,2.2) to[out=0,in=90] (1.5,.75) to [out=-90,in=90] (.5,-1.125) to (.5,-1.25)--cycle;
\draw[fill=white] (0,.85) circle (22.5pt);
\draw[thick] (-.35,.55) to (.35,.55) to (.35,1.15) to (-.35,1.15)--cycle;\node at (0,.85) {\small \text{$x_1^2$}};
\draw[thick,Orange,densely dashed]  (0,1.65) to[out=90,in=180] (.25,1.85) to[out=0,in=90] (1.2,.85) to[out=-90,in=90] (0,-1.25) ; 
\draw[black,fill=black] (0,1.65) circle (1.5pt);
\draw[thick,-to] (-.79,.85) -- (-.79,.8);  
\draw[thick,-to] (-.8,-.41) -- (-.85,-.34); 
\draw[thick,to-] (.8,-.42) -- (.85,-.35); 
\draw[thick] (-.5,-1.25) to (-.5,-1.125) to[out=90,in=-90] (-1.5,.75) to[out=90,in=180] (0,2.2) to[out=0,in=90] (1.5,.75) to [out=-90,in=90] (.5,-1.125) to (.5,-1.25);
\draw[thick, dotted] (-1.75,-1.25) to (1.75,-1.25); \draw[thick, dotted] (-1.75,2.5) to (1.75,2.5);
}} \endxy
\end{equation*}

A similar computation to the above shows that an $(R,R)$-bimodule maps $\undB\lra R$ are given by $1\longmapsto f(x_1-x_2)$, for any $f\in \kk[x_1^2,x_2^2]$. The generating map, for $f=1$, has degree $3$, due to shift in the degree of $\undB$ as defined. Consequently, the  inner product $(bc,1)$ is given by (\ref{eq_four}). 

Each object of $\sbim$ has a biadjoint object (since the generating objects do). Bimodules $B,\undB$ and $\undR,\undR$ define biadjoint pairs of functors.
Denote the corresponding "biadjointness" antiinvolution on $K_0(\sbim)$ by $\tau$.
It has the properties
\begin{equation*}
\tau(1)=1, \mspace{10mu} \tau(c)=c,\mspace{10mu} \tau(b)=bc, 
\end{equation*}
and
\begin{equation*}
\tau(xy) = \tau(y)\tau(x), \mspace{10mu}  \tau(qx) = q^{-1}\tau(x), \mspace{20mu} x,y\in K_0(\sbim).  \end{equation*}
    
In general, such "biadjointness" involutions reverse the order in the product, but due to commutativity of $K_0(\sbim)$ it does not matter in our case. This involution is $\Z[q,q^{-1}]$-antilinear and compatible with the bilinear form, 
\[ (xm,n) = (m,\tau(x)n), \  x,m,n\in K_0(\sbim).
\] 

Adjointness allows to finish the computation of the inner products (\ref{eq_three}) and (\ref{eq_four}). We can further compute that 
\begin{equation*}
       (b,b)=\frac{1+q^4}{(1-q^4)^2}, \ \ (b,bc)= \frac{2q^2}{(1-q^4)^2}.  
\end{equation*}

Some of the generating maps for $\HOM$ spaces between the four indecomposable bimodules in $\sbim$ are shown below, with each map of degree one. Generating maps in the opposite direction, all in degree 3, are not shown. 

\begin{equation*}
\xy (0,0)*{
\tikzdiagc[scale=.7]{
\node at (0, 1.5) {\text{$B$}};
\node at (0,-1.5) {\text{$\undB$}};
\node at (-4,0) {\text{$\undR$}};
\node at ( 4,0) {\text{$R$}};
\node at (-7,0) {\text{$0$}};
\node at ( 7,0) {\text{$0$}};
\draw[thick,-to] (-6.5, .15) to (-4.5, .15);
\draw[thick,to-] (-6.5,-.15) to (-4.5,-.15);
\draw[thick,to-] ( 6.5, .15) to ( 4.5, .15);
\draw[thick,-to] ( 6.5,-.15) to ( 4.5,-.15);
\draw[thick,-to] (-3.5, .2) to (-.35, 1.5);
\draw[thick,to-] (-3.5,-.2) to (-.35,-1.5);
\draw[thick,to-] ( 3.5, .2) to ( .35, 1.5);
\draw[thick,-to] ( 3.5,-.2) to ( .35,-1.5);
\begin{scope}[shift={(2.5,1.55)},scale=.6]
\draw[ultra thick,blue] (0,-.4) -- (0, .25)node[pos=1, tikzdot]{};
\draw[ultra thick,blue,-to] (0,-1) to (0,-.3); 
\draw[thick, dotted] (-.75,1) to (.75,1);\draw[thick, dotted] (-.75,-1) to (.75,-1);
\end{scope}
\begin{scope}[shift={(2.5,-1.55)},scale=.6,yscale=-1]
\draw[ultra thick,blue] (0,-.4) -- (0, .25)node[pos=1, tikzdot]{};
\draw[ultra thick,blue,-to] (0,-1) to (0,-.3); 
\draw[thick, dotted] (-.75,1) to (.75,1);\draw[thick, dotted] (-.75,-1) to (.75,-1);
\end{scope}
\begin{scope}[shift={(-2.5,1.55)},scale=.6]
\draw[very thick,Orange,densely dashed] (0,-1.) to (0,0); 
\draw[ultra thick,blue,to-] (0,.75) -- (0, -.025)node[pos=1, tikzdot]{};
\draw[ultra thick,blue] (0,1) to (0,.7); 
\draw[thick, dotted] (-.75,1) to (.75,1);\draw[thick, dotted] (-.75,-1) to (.75,-1);
\end{scope}
\begin{scope}[shift={(-2.5,-1.55)},scale=.6,yscale=-1]
\draw[very thick,Orange,densely dashed] (0,-1.) to (0,0); 
\draw[ultra thick,blue,to-] (0,.75) -- (0, -.025)node[pos=1, tikzdot]{};
\draw[ultra thick,blue] (0,1) to (0,.7); 
\draw[thick, dotted] (-.75,1) to (.75,1);\draw[thick, dotted] (-.75,-1) to (.75,-1);
\end{scope}
}}\endxy
\end{equation*} 
 
For each of the four arrows between these four bimodules, 
the $\HOM$ space is a one-dimensional $Z(R)$ module with the generator shown. There are no homs between $R$ and $\undR$, and the corresponding compositions are $0$. The upper and lower portions of the diagram constitute two short exact sequences, and zero objects are added on the sides to emphasize that.

Note that trivalent vertices appear in this calculus 
when passing to the tensor products of $B$'s and $\undB$'s, to describe tensor product decompositions into direct sums.

The bilinear form is determined by $\Z[q,q^{-1}]$-linear trace form $\tr$ on $K_0(\sbim)$, where $\tr(a)=(1,a)$, with 
\begin{equation*}
     \tr(1)=\frac{1}{(1-q^4)^2} , \ \ \tr(b) = \frac{q^3}{(1-q^4)^2},  \ \  \tr(c)=0, \ \  \tr(bc)= \frac{q}{(1-q^4)^2}.  
\end{equation*}
The inner product and trace can be rescaled by $(1-q^4)^2$ to take values in $\Z[q,q^{-1}]$. This corresponds to viewing hom spaces as free graded modules over $Z(R)$ (under either left or right multiplications by central elements) and taking their graded ranks.

%
%

\section{An obstacle to the Reidemeister III relation} \label{sec-obstacle} 

Consider the ring $R_3$ of supercommuting polynomials in 3 variables, 
\[ R_3 = \kk\langle x_1,x_2,x_3\rangle/(x_ix_j+x_jx_i), \ \  1\le i<j\le 3. 
\] 
In this section let us denote $R_3$ by $R$.

The symmetric group $S_3$ acts on $R$, with 
\begin{eqnarray*}
   & &  s_1(x_1)=-x_2, \ s_1(x_2)=-x_1, \ s_1(x_3)=- x_3, \\
   & &  s_2(x_1)=-x_1, \ s_2(x_2)=-x_3, \ s_2(x_3)=- x_2,
\end{eqnarray*}
and $s_i(fg)=s_i(f)s_i(g)$ for $f,g\in R$. 

There are two odd Demazure operators, $\partial_1,\partial_2: R\lra R$. 
Operator $\partial_1 :R\lra R$ is given by: 
\begin{itemize}
\item $\partial_1(1)=0,$  $\partial_1(x_1)=\partial_1(x_2)=1$, $\partial_1(x_3)=0$,
\item the twisted Leibniz rule holds 
\[\partial_1 (fg)=(\partial_1 f)g+s_1(f)\partial_1 g ,
\]
\end{itemize}
and likewise for $\partial_2$.

The kernels of $\partial_1,\partial_2$
 are  subrings $R^1,R^2\subset R$. For instance, ring $R^1$ is the subring of $R$ generated by $x_1-x_2,x_1x_2,x_3$. 
 
 Form graded $R$-bimodules 
\[ B_i \ := \ R \otimes_{R^i}  R \{-1\}, \ \ i = 1, 2.
\] 
 Note that $x_3(1\otimes 1)=(1\otimes 1)x_3$, where $1\otimes 1$ is the generator of $B_1$. 
 
 Diagrammatic calculi of the earlier sections can be repeated separately for $B_1$ and $B_2$. In case of $B_1$ we would need the permutation bimodule, denoted $\undR^{1}\cong R\undone_1$, with the generator $\undone_1$ and $x\undone_1 = \undone_1 s_1(x)$ for $x\in R$ and can then form $\undB_1 = 
 B_1\otimes_R \undR^1\cong \undR^1\otimes_R B_1$. We have not tried to develop a diagrammatical calculus of odd 3-stranded Soergel bimodules which would add interactions between products of $B_1$ and $B_2$.

Consider graded $R$-bimodules 
\[ \undB_i \ := \ R \otimes_{R^{i}} \underline{R}^{i}\otimes_{R^{i}} R \{-1\} \ = \ \ R \undotimes_{R^{i}} R \{-1\}, \ \ i = 1, 2.
\] 
 Define the bimodule
\[ B_{\widehat{121}} := R\otimes_{R^{[2]}} R \{-3\}.
\] 
Here $R^{[2]}\subset R$ is the subring of odd symmetric functions in three variables, $R^{[2]}=\ker(\partial_1)\cap \ker(\partial_2)$.

\begin{prop}  There exists an exact sequence of graded $R$-bimodules 
\begin{equation}
    0 \lra B_{\widehat{121}} \lra B_1 \otimes B_2\otimes B_1 \lra \undB_1 \lra 0 
\end{equation}
This sequence does not split. 
\end{prop}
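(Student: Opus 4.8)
The plan is to write down an explicit pair of maps, prove exactness by combining a graded‑rank count with a graded–Nakayama (split‑ness) argument, and then detect non‑splitting by computing a single finite‑dimensional space of bimodule maps out of $\undB_1$.

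\emph{Ranks.} First I would record that $R$ is graded‑free of finite graded rank over each of $R^1$, $R^2$ and $R^{[2]}=R^1\cap R^2$, of graded ranks $1+q^2$, $1+q^2$ and $(1+q^2)(1+q^2+q^4)$ respectively (the last one being the graded dimension of the $S_3$‑coinvariant algebra $R/RR^{[2]}_{>0}$, obtained exactly as in the commutative case). Hence all three bimodules in the statement are graded‑free as left (equivalently right) $R$‑modules, of graded ranks $(q^{-1}+q)^3$, $q^{-1}+q$ and $q^{-3}(1+q^2)(1+q^2+q^4)$, and one checks the polynomial identity $(q^{-1}+q)^3=q^{-3}(1+q^2)(1+q^2+q^4)+(q^{-1}+q)$, so the claimed short exact sequence is at least consistent with graded ranks.

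\emph{The two maps and exactness.} For the surjection I would take (up to an overall grading shift) the composite
\[
B_1\otimes B_2\otimes B_1\xra{\ \id\otimes m\otimes\id\ }B_1\otimes B_1\xra{\ \pi\ }\undB_1 ,
\]
where $m\colon B_2\lra R$ is the multiplication map~\eqref{eq:enddot} for the index‑$2$ strand and $\pi$ is the projection onto the $\undB_1$‑summand of $B_1\otimes B_1\cong B_1\{-1\}\oplus\undB_1\{1\}$ from Corollary~\ref{cor_dir_sum} applied to the index‑$1$ strand; this is surjective since $m$ is, $B_1$ is $R$‑flat, and $\pi$ is a split epimorphism. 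For the inclusion I would take
\[
\iota\colon B_{\widehat{121}}=R\otimes_{R^{[2]}}R\{-3\}\lra R\otimes_{R^1}R\otimes_{R^2}R\otimes_{R^1}R\{-3\}=B_1\otimes B_2\otimes B_1,\qquad f\otimes g\longmapsto f\otimes 1\otimes 1\otimes g,
\]
which is a well‑defined $R$‑bimodule map because every $a\in R^{[2]}$ can be slid from the left of the rightmost factor successively across all three internal tensor products. Then $q\circ\iota=0$: after applying $\id\otimes m\otimes\id$ the element $f\otimes1\otimes1\otimes g$ becomes $f\otimes_{R^1}1\otimes_{R^1}g\in B_1\otimes B_1$, which lies in the ``dot–split'' summand $B_1\{-1\}$ (it is fixed by the idempotent of Proposition~\ref{prop:decBB} projecting onto $B_1\{-1\}$), hence is killed by $\pi$. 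Since $\undB_1$ is graded‑free, $q$ is split as a map of left $R$‑modules, so $\ker q$ is a graded direct summand of $B_1\otimes B_2\otimes B_1$, in particular graded‑free, of graded rank $(q^{-1}+q)^3-(q^{-1}+q)$, i.e.\ of the same graded rank as $B_{\widehat{121}}$. It remains to see that $\iota$ is a \emph{split} monomorphism of left $R$‑modules; by graded Nakayama this is equivalent to injectivity of $\iota\otimes_R\kk$, a finite‑dimensional linear check comparing $R/RR^{[2]}_{>0}$ with the corresponding iterated coinvariant quotient of $B_1\otimes B_2\otimes B_1$. Granting this, $\iota(B_{\widehat{121}})$ is a graded summand of $\ker q$ of the same graded rank, so the two coincide and the sequence is exact.

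\emph{Non‑splitting (the main obstacle).} Finally, a splitting would give a degree‑$0$ bimodule map $s\colon\undB_1\lra B_1\otimes B_2\otimes B_1$ with $q\circ s=\id_{\undB_1}$. Now $\undB_1$ is the cyclic $R$‑bimodule on a generator $w$ in degree $-1$ subject only to the relations $a\,w=w\,s_1(a)$ for $a\in R^1$ — in particular $x_1x_2\,w=-w\,x_1x_2$ and $x_3\,w=-w\,x_3$. Hence such an $s$ is the same datum as an element $\omega$ in the degree‑$(-1)$ part of $B_1\otimes B_2\otimes B_1$ satisfying these same relations and with $q(\omega)=w$. The degree‑$(-1)$ part of $B_1\otimes B_2\otimes B_1$ is finite‑dimensional, the relations $a\,\omega=\omega\,s_1(a)$ for $a\in\{x_1-x_2,\,x_1x_2,\,x_3\}$ are finitely many linear conditions, and I would compute the resulting space $V$ of admissible $\omega$ and check that $q(V)=0$, so that no section can exist. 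This is the heart of the section: it is precisely the point where the odd sign rules obstruct the direct‑sum decomposition of $B_1\otimes B_2\otimes B_1$ that holds in the even case, and it is a phenomenon invisible at the level of graded dimensions — so the computation, although elementary, must be carried out with care.
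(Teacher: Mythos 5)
The paper states this proposition without proof, so there is no argument of the authors' to compare against; I will assess your proposal on its own terms.

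Your overall strategy — explicit maps, a graded-rank count plus a graded Nakayama argument for exactness, and a finite linear computation in cohomological degree $-1$ for non-splitting — is sound and is probably close to the most efficient route. The ranks are right (the polynomial identity $(q^{-1}+q)^3 = q^{-3}(1+q^2)(1+q^2+q^4)+(q^{-1}+q)$ checks out), $\iota$ is a well-defined bimodule map, $q\circ\iota=0$ follows correctly from identifying $f\otimes_{R^1}1\otimes_{R^1}g$ as lying in the $B_1\{-1\}$-summand, and the "same graded rank plus split injectivity implies equality" step is legitimate because a graded projective complement of graded rank zero over this connected graded noetherian ring is zero. Your reduction of the splitting question to the existence of a degree-$(-1)$ element $\omega\in B_1\otimes B_2\otimes B_1$ satisfying $a\omega=\omega\,s_1(a)$ for $a\in\{x_1-x_2,\,x_1x_2,\,x_3\}$ and $q(\omega)=w$ is also correct, since $\undB_1$ is the cyclic $R$-bimodule on a degree-$(-1)$ generator subject to exactly those relations, and the degree-$(-1)$ component of $\undB_1$ is one-dimensional.

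The gap is that the two verifications on which the whole argument turns are only announced, not performed. Exactness is left hanging on the injectivity of $\iota\otimes_R\kk$; this is a concrete finite-dimensional statement (essentially that the natural map from the odd $S_3$-coinvariant algebra to the degree-truncation of $R\otimes_{R^1}R\otimes_{R^2}R\otimes_{R^1}\kk$ is injective), but you have not done it, and you have also only asserted by analogy with the commutative case that $R$ is graded free over $R^{[2]}$ of graded rank $(1+q^2)(1+q^2+q^4)$ — that much is known (it is in the odd-symmetric-functions literature, e.g.\ Ellis–Khovanov–Lauda), but it needs a citation, not an appeal to analogy. More importantly, the non-splitting — which is precisely the new phenomenon the section is about — is reduced correctly to "compute $V$ and check $q(V)=0$", but you stop there. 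That computation is the actual content of the claim; without it you have shown only that the sequence is consistent with not splitting, not that it fails to split. As written, then, this is a proof outline with the decisive step unfinished, rather than a proof.
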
 

The absence of a splitting creates a problem for the  Reidemeister III move invariance. When resolving complexes for $\eR_1\eR_2\eR_1$ and $\eR_2\eR_1\eR_2$ there are not enough contractible summands to slim the complexes down to those with the leftmost term $B_{\widehat{121}}$, which is how the isomorphism is proven in the even case. Instead, the leftmost terms are $B_1 \otimes B_2\otimes B_1$ and $B_2\otimes B_1\otimes B_2$, respectively, which are not isomorphic. This prevents the corresponding complexes of bimodules from being isomorphic in the homotopy category. Finding a way around this obstacle is an interesting problem.


\section{Comparison with the even case}\label{sec_comparison}

In this section we use the same notations to denote the corresponding structures in the even case: 
\begin{itemize}
    \item $R=\kk[x_1,x_2]$  is the ring of polynomials in two variables. $S_2$ acts on it by permuting the variables. 
    \item $\partial$ is the Demazure operator, $\partial(f)=\frac{f- {}^sf}{x_1-x_2}$. 
    \item $R^s\subset S$ is the ring of symmetric functions. 
    \item $B=R\otimes_{R^s}R \{-1\}$ is the generating Soergel bimodule for two variables. 
    \item $\undR$ is the transposition bimodule, $\undR= R\undone = \undone R$, $x_i \undone = \undone x_{s(i)}$. 
\end{itemize}
For the general theory of Soergel bimodules we refer to~\cite{EMTW,S-HarishChandra} and for the diagrammatic calculus of Soergel bimodules to~\cite{EKh,EMTW}.  
There is a natural isomorphism of $(R^s,R)$-bimodules 
\begin{equation*}
    {}_{R^s}R  \otimes_R \undR \ \cong \ {}_{R^s}R 
\end{equation*}
due to involution $s$ acting by identity on $R^s$.

Likewise, there's an isomorphism of $(R,R^s)$-bimodules 
\begin{equation*}
\undR \otimes_R R_{R^s}\cong R_{R^s}.
\end{equation*}
Tensoring these equations with the other ``halves'' of the bimodule $B$ gives bimodule isomorphisms
\begin{equation*}
     \undR \otimes_R B \cong B \cong B\otimes_R \undR. 
\end{equation*}
Odd replacement of these isomorphisms motivates introducing bimodules $\undB$ in that case, see earlier. 

Multiplication map $f\otimes g \longmapsto fg$ induces a surjective bimodule map $B\lra R\{-1\}$ which extends to a short exact sequence of bimodules
\begin{equation*}
    0 \lra \undR\{1\} \lra B\lra R\{-1\}\lra 0 
\end{equation*}
Tensoring all terms of this sequence with $\undR$ flips the sequence to the opposite
\begin{equation}\label{eq_exact_two}
    0 \lra R\{1\} \lra B\lra \undR\{-1\}\lra 0 
\end{equation}

These filtrations were emphasized in~\cite{KhRII}. They allow to think of Rouquier complexes as a sort of  \emph{homological perturbation} or \emph{homological quantization} of the permutation bimodule $\undR$. In the homotopy category (and ignoring $q$-gradings) the complex 
\begin{equation}\label{eq_comp_one}
    0\lra B\lra R\lra 0
\end{equation} 
is not isomorphic to 
\begin{equation}\label{eq_comp_two}
0\lra \undR \lra 0.
\end{equation} 
(in both complexes we place the leftmost nontrivial term in degree $0$). 
Homology groups of these complexes are isomorphic, though. Complex (\ref{eq_comp_one}) is giving by thickening complex (\ref{eq_comp_two}) by the contractible complex $0\lra R\stackrel{\id}{\lra} R\lra 0$, which does not change the homology but makes the complex more subtle on the homotopy category level. The transposition relation, which holds for $\undR$ (that $\undR\otimes \undR\cong R$) fails for the complex (\ref{eq_comp_one}). It's substituted by the weaker relation that (\ref{eq_comp_one}) is invertible in the homotopy category, with the quasi-inverse complex $0\lra R\lra B\lra 0$ given by truncating (\ref{eq_exact_two}). The quasi-inverse is another thickening of $\undR$. 

Before ``homological perturbation'', tensoring with the bimodule $\undR$ is a symmetry of order two, with $\undR\otimes_R \undR\cong R$. Homological perturbation results in an invertible functor of infinite order, while retaining the Reidemeister III relation (the braid relation) in the homotopy category of 
 complexes of bimodules over $\kk[x_1,x_2,x_3]$. 

Thus, upon this homological perturbation, action of the permutation group $S_n$ on the category of $\kk[x_1,\dots, x_n]$-modules given by tensoring with permutation bimodules $\undR_i$  becomes a much more subtle action of the $n$-stranded braid group on the homotopy category of $\kk[x_1,\dots, x_n]$-modules by Rouquier complexes.

\bibliographystyle{amsplain}
\bibliography{biblio}

\end{document}